\newtheorem{lem}{Lemma}[section]
\newtheorem{teo}[lem]{Theorem}
\newtheorem{pro}[lem]{Proposition}
\newtheorem{cor}[lem]{Corollary}
\newtheorem{claim}[lem]{Claim}
\newtheorem*{con*}{Conjecture}
\newtheorem{Conj}{Conjecture}
\newtheorem{Question}[Conj]{Question}
\theoremstyle{definition}
\newtheorem{exa}[lem]{Example}
\theoremstyle{remark}
\newtheorem*{rem*}{Remark}
\newcommand{\argu}{\hbox to 7truept{\hrulefill}}
\DeclareMathOperator{\D}{\mathcal D}
\DeclareMathOperator{\SL}{SL}
\DeclareMathOperator{\im}{Im}
\DeclareMathOperator{\rk}{rk}
\DeclareMathOperator{\Ann}{Ann}
\DeclareMathOperator{\supp}{supp}
\DeclareMathOperator{\ab}{ab}
\DeclareMathOperator{\Tor}{Tor}
\newcommand{\myeq}[1]{\ensuremath{\stackrel{\text{#1}}{=}}}
\newcommand{\R}{\mathbb{R}}
\newcommand{\Z}{\mathbb{Z}}
\newcommand{\N}{\mathbb{N}}
\newcommand{\CC}{\mathbb{C}}
\newcommand{\Q}{\mathbb{Q}}
\DeclareMathOperator{\fp}{FP}
\def\immerses{\looparrowright}
\newcommand{\isom}{\cong}
\newcommand{\normal}[1]{\langle\!\langle #1 \rangle\!\rangle}
 \date{\today}
\begin{document}

\title[Coherence of one-relator groups]{On the coherence of  one-relator groups and their group algebras}
\author{Andrei Jaikin-Zapirain}

 \address{Departamento de Matem\'aticas, Universidad Aut\'onoma de Madrid \and  Instituto de Ciencias Matem\'aticas, CSIC-UAM-UC3M-UCM, Madrid, Spain}
\email{andrei.jaikin@icmat.es}

\author{Marco Linton}
\address{Instituto de Ciencias Matem\'aticas, CSIC-UAM-UC3M-UCM, Madrid, Spain}
\email{marco.linton@icmat.es}

\begin{abstract}
We prove that one-relator groups are coherent, solving a well-known problem of Gilbert Baumslag. Our proof strategy is readily applicable to many classes of groups of cohomological dimension two. We show that fundamental groups of two complexes with non-positive immersions are homologically coherent, we show that groups with staggered presentations and many Coxeter groups are coherent and we show that group algebras over fields of characteristic zero of groups with reducible presentations without proper powers are coherent.
\end{abstract}

\maketitle

\section{Introduction}
A group is {\bf coherent} if all of its finitely generated subgroups are finitely presented. A notorious conjecture of Baumslag's \cite{Ba74} predicts that all one-relator groups are coherent. The first non-trivial result on the coherence of one-relator groups is due to Karrass--Solitar \cite{KS70,KS71} who showed that all cyclically and conjugacy pinched one-relator groups are coherent. Renewed interest in Baumslag's conjecture in the last two decades has galvanised much of the recent literature on coherence, as can be seen from Wise's excellent survey \cite{Wi20}.

A notion that plays a particularly important role in recent developments is that of {\bf non-positive immersions}. Wise introduced this notion in \cite{wise_03} and laid out a program to solve Baumslag's conjecture. The strategy involved showing that the fundamental group of a two-complex with non-positive immersions is coherent and establishing that the presentation complex of a torsion-free one-relator group has non-positive immersions. Although the former remains open, many groups known (or conjectured) to be coherent have been shown to fall under the rubric of non-positive immersion \cite{Wi20}. The latter step was eventually achieved by Helfer--Wise \cite{HW16} and, independently, Louder--Wilton \cite{LW17}.

Following on from this progress, Louder--Wilton \cite{louder_20} and, independently, Wise \cite{wise_20} showed that all one-relator groups with torsion are coherent. Both proofs relied on the fact that a one-relator group with torsion has a finite index subgroup that is the fundamental group of a two-complex with a stronger version of non-positive immersions. Another, related, strengthening of non-positive immersions is constituted by negative immersions, as defined in \cite{LW22}. Louder--Wilton showed in \cite{LW22} that the presentation complex of a one-relator group $G$ has negative immersions if and only if $G$ is two-free and then later confirmed Baumslag's conjecture for all such one-relator groups in \cite{LW21}.

In a different direction, one-relator groups were shown to be generically coherent by work of Sapir--\v{S}pakulov\'{a} \cite{SS10} and Kielak--Kropholler--Wilkes \cite{kielak_22}. These results were achieved by showing that generic one-relator groups are virtually ascending HNN-extensions of free groups and then appealing to a result of Feighn--Handel \cite{FH99}. In a similar vein, Kielak and the second author showed in \cite{KL23} that hyperbolic and virtually compact special one-relator groups are virtually free-by-cyclic and then also appealed to the work of Feighn--Handel to establish their coherence.

Although significant work has gone into Baumslag's conjecture, the authors are not aware of any literature on the coherence of group rings of one-relator groups; a ring $R$ is  (left)  {\bf coherent} if every  finitely generated (left) ideal is finitely presented as a (left) $R$-module. There seems to be a relation between the coherence of $G$ and $\Z[G]$. They are equivalent for elementary amenable  groups \cite{HKKL23}  (see also \cite{BS79})  and they are both commensurability invariants. However, in general there is no known implication between the two properties.

In this paper we confirm Baumslag's conjecture for all one-relator groups and their group rings.

\begin{teo}
\label{main}
If $G$ is a one-relator group and $K$ is a field of characteristic zero, then 
\begin{enumerate}
\item $G$ is coherent.
\item $K[G]$ is coherent.
\end{enumerate}
\end{teo}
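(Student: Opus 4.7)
My plan is to split according to whether the defining relator is a proper power. If it is, then $G$ has torsion, and coherence of $G$ follows from the work of Louder--Wilton and Wise cited above; for the group ring $K[G]$ in this case one would still need a separate argument, perhaps by lifting the hierarchy arguments of those papers to the ring level, or by passing to a finite index torsion-free subgroup and using that coherence of group rings is a commensurability invariant. The interesting case is when the relator is not a proper power, so $G$ is torsion-free, has cohomological dimension two, and its presentation two-complex has non-positive immersions by Helfer--Wise and Louder--Wilton.

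In the torsion-free case, the natural strategy is to use the Magnus hierarchy, which exhibits $G$ as an HNN extension $G = G' *_F$ of a one-relator group $G'$ of smaller complexity over a finitely generated free edge group $F$. I would induct on the length of the relator, with free groups as the base case (which are coherent and whose group rings are fir's). Since ordinary coherence is not preserved under HNN extensions along finitely generated free subgroups in general, one has to carry a stronger inductive invariant. The candidate that matches the abstract is a \emph{homological coherence} attached to the non-positive immersions property: a uniform finite presentability statement for finitely generated submodules of free $\Z[G]$- or $K[G]$-modules. A Mayer--Vietoris argument for the HNN decomposition should propagate the invariant from $G'$ to $G$, provided the kernel of the boundary map is adequately controlled.

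The coherence of $K[G]$ in characteristic zero probably requires an additional algebraic input, building on the first author's prior work on Hughes-free embeddings $K[G] \hookrightarrow \D_{K[G]}$ into a division ring. The associated Sylvester matrix rank function should detect finite presentation of finitely generated ideals via a homological dimension estimate, reducing coherence of $K[G]$ to a statement of the form: every finitely generated $K[G]$-module of $\D_{K[G]}$-rank zero has finite projective dimension bounded by the cohomological dimension of $G$. One would then transfer coherence of $K[G]$ back to $G$ by applying the homological estimate to augmentation ideals of finitely generated subgroups.

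The main obstacle, I expect, is the inductive step in the Magnus hierarchy: showing that the Mayer--Vietoris kernel arising from the HNN extension is not merely finitely generated but finitely presented (over $\Z[G]$ or $K[G]$). This is precisely where the non-positive immersions hypothesis, or a stronger homological replacement of it, must be used to give a quantitative bound. Making this bound stable under the induction—so that it descends from $G'$ to $G = G' *_F$ without deteriorating at each step—seems to me to be the technical heart of the argument, and likely accounts for why the conjecture resisted earlier hierarchical attacks that did not have Hughes-free embeddings and Sylvester rank functions available.
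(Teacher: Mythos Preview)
Your plan has the right ingredients listed but assembles them in the wrong architecture, and the place you yourself flag as ``the technical heart'' is exactly where your approach would break.

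For part~(1), you propose to run an induction up the Magnus hierarchy, carrying a homological-coherence invariant through each HNN step via Mayer--Vietoris. The paper does \emph{not} do this, and for good reason: homological coherence is no better behaved than ordinary coherence under HNN-extensions over free edge groups, so the inductive step you worry about really is an obstruction. Instead the paper decouples the two issues. Homological coherence is proved \emph{globally}, with no induction: one-relator groups have $b_2^{(2)}=0$ (Dicks--Linnell), are virtually locally indicable of cohomological dimension two, and for any such group every finitely generated subgroup $H$ has $b_2^{(2)}(H)<\infty$ (this is \cref{vanishing2nd}, a monotonicity argument using that $\D_{K[H]}\otimes_{K[H]}K[G]$ embeds in $\D_{K[G]}$). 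Then \cref{finite} converts finiteness of $\dim_{\D}\Tor_2^{K[H]}(\D,K)$ into type $\fp_2$. The hierarchy is used \emph{only} for the promotion $\fp_2\Rightarrow$ finitely presented, via a general theorem (\cref{equivalence}/\cref{fp2_action}): if $G$ acts on a tree with coherent vertex stabilisers, then every $\fp_2$ subgroup is finitely presented. This promotion step is the genuinely new structural ingredient and is what you are missing; it relies on a Dunwoody/Guirardel--Levitt accessibility argument together with Bieri--Strebel's characterisation of $\fp_2$.

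For part~(2), your instinct to use the Hughes-free division ring $\D_{K[G]}$ is correct, but the decisive statement is sharper than a ``homological dimension estimate'': in the torsion-free case one shows that $\D_{K[G]}$ has weak dimension at most one as a right $K[G]$-module (\cref{weak}), by directly constructing the flat module $\D_{K[G]}\otimes_K I_{K[G]}$ via an inductive argument over the reducible structure of the presentation complex. Combined with global dimension $\leqslant 2$, \cref{crit} then gives coherence of $K[G]$ immediately --- no hierarchy needed here either. In the torsion case, passing to a torsion-free finite-index subgroup is not enough on its own; the paper invokes the recent result of Kielak and the second author that a one-relator group with torsion is virtually free-by-cyclic, and then \cref{mappingtorus} (or \cite{HL22}) finishes.
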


We now explain the steps that go into the proof.

\subsection{Coherence of groups}

A third notion of coherence plays an important role in our proof of \cref{main}; a group $G$ is {\bf homologically coherent} if every finitely generated subgroup of $G$ is of type $\fp_2\left (\Z\right )$. Both coherence of $G$ and $\Z[G]$ imply homological coherence of $G$. The first groups of type $\fp_2(\Z)$ that are not finitely presented were constructed by Bestvina--Brady \cite{bestvina_97}. Building on their ideas, many groups with various finiteness properties have been constructed by a plethora of different authors. Despite this, it is unknown whether there exists a group that is homologically coherent but not coherent. In this direction, Gersten \cite{Ge96} showed that for hyperbolic groups of cohomological dimension two, these two notions of coherence are equivalent.

The proof of the coherence of one-relator groups consists of two main steps. First, we show that one-relator groups are homologically coherent. In fact, we prove homological coherence for the  class of fundamental groups of two-complexes with non-positive immersions and for the class of locally indicable groups of cohomological dimension two with trivial second $L^2$-Betti number. A one-relator group contains a torsion-free subgroup of finite index that belongs to both classes \cite{Br84, Ho00, HW16, LW17}.

\begin{teo}
\label{npi}
Let $G$ belong to one the following families of groups:
\begin{enumerate}
\item Fundamental groups of two-complexes with non-positive immersions.
\item Locally indicable groups of cohomological dimension two with trivial second $L^2$-Betti number.
\end{enumerate}
Then $G $ is homologically coherent.
\end{teo}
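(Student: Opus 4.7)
The plan is to address the two parts separately; both reduce to showing that, for any finitely generated subgroup $H \leq G$, the relation module $M = R/[R,R]$ is finitely generated as a $\Z[H]$-module, where $R$ is the kernel of some surjection $F \twoheadrightarrow H$ from a free group $F$ of finite rank $k$. This is equivalent to $H$ being of type $\fp_2(\Z)$.

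For part (2), I would use the machinery of Hughes-free division rings for locally indicable groups developed by the first author. The subgroup $H$ inherits local indicability from $G$ and satisfies $\operatorname{cd}_{\Z} H \leq 2$. The partial free resolution
\[
0 \to M \to \Z[H]^k \to \Z[H] \to \Z \to 0
\]
forces $M$ to be a projective $\Z[H]$-module, by Schanuel's lemma applied to $\operatorname{cd}_{\Z} H \leq 2$. Let $\D_{\Z[H]}$ denote the Hughes-free division ring of $\Z[H]$; since it is flat over $\Z[H]$, tensoring the above yields that $M \otimes_{\Z[H]} \D_{\Z[H]}$ has $\D_{\Z[H]}$-dimension equal to $k - 1$. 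Combining this finite $\D_{\Z[H]}$-rank bound with the vanishing $b_2^{(2)}(H) = 0$ (which should be inherited from $b_2^{(2)}(G) = 0$ via approximation results for locally indicable groups) and a criterion guaranteeing that projective $\Z[H]$-modules of finite $\D_{\Z[H]}$-rank are finitely generated would complete the argument.

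For part (1), my preferred strategy is a reduction to part (2). One would verify that a two-complex $X$ with non-positive immersions has $\pi_1(X)$ locally indicable, $\operatorname{cd}_{\Z} \pi_1(X) \leq 2$, and $b_2^{(2)}(\pi_1(X)) = 0$. Local indicability and asphericity of such complexes are known or follow by standard arguments, and the $L^2$-vanishing can be extracted from the Euler-characteristic bound of non-positive immersions via a L\"uck-style approximation. If this reduction is not fully available in the precise form needed, a direct topological alternative would be to exhaust the cover $X_H \to X$ corresponding to $H$ by an increasing sequence of finite subcomplexes $Y_0 \subset Y_1 \subset \cdots$, each immersing into $X$ and therefore satisfying $\chi(Y_i) \leq 1$, and then derive finite generation of $M$ by a tower-type Euler-characteristic argument.

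The main obstacle I expect is the final step in part (2): translating the finite $\D_{\Z[H]}$-dimension of the projective module $M$ into genuine finite generation over $\Z[H]$. This is where $L^2$-Betti numbers, the strong Atiyah conjecture, and Hughes-free division rings for locally indicable groups interact non-trivially, and it constitutes the deepest algebraic input in the argument.
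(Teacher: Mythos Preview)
Your outline has the right shape but contains two genuine errors.

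First, the claim that the Hughes-free division ring $\D=\D_{\Q[H]}$ is flat over the group ring is false; if it were, every $\Tor_i^{\Q[H]}(\D,-)$ with $i\geq 1$ would vanish and there would be no $L^2$-invariants to discuss. So your derivation of $\dim_\D(\D\otimes_{\Z[H]}M)=k-1$ is unjustified. In fact the only input needed is finiteness of $b_2^{(2)}(H)=\dim_\D\Tor_2^{\Q[H]}(\D,\Q)$: this is the kernel of $\D\otimes M\to\D^k$, so its finiteness forces $\dim_\D(\D\otimes M)<\infty$, and then the Kropholler--Linnell--L\"uck argument (Proposition~\ref{fp} and Corollary~\ref{finite}) shows the projective module $M$ is finitely generated. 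For part~(2), the inheritance $b_2^{(2)}(G)=0\Rightarrow b_2^{(2)}(H)=0$ is not an ``approximation'' statement --- L\"uck approximation concerns finite-index towers, not arbitrary subgroups. It is proved in Proposition~\ref{vanishing2nd} using the Linnell property of $\D_{\Q[G]}$, namely that $\D_{\Q[H]}\otimes_{\Q[H]}\Q[G]$ embeds into $\D_{\Q[G]}$, followed by a short dimension-shift.

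Second, your preferred route for part~(1) --- reducing to part~(2) by establishing $b_2^{(2)}(\pi_1(X))=0$ for every two-complex $X$ with non-positive immersions --- is not available: this implication is unknown and is posed as an open question in Section~\ref{sect:final}. What the paper does for part~(1) is precisely your ``alternative'': the tower $Y_0\immerses Y_1\immerses\cdots\immerses X$ of finite connected subcomplexes with $\varinjlim\pi_1(Y_i)=H$ and $\chi(Y_i)\leq 0$ (Proposition~\ref{l2betti}) yields $b_2^{(2)}(H)\leq b_1^{(2)}(H)<\infty$, and Corollary~\ref{finite} then applies. Note that one obtains finiteness, not vanishing, of $b_2^{(2)}(H)$, and that this already suffices.
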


We remark that we may replace local indicability in \cref{npi} by satisfying the Strong Atiyah conjecture. However, we show in \cref{Atiyah_locally} that if $G$ satisfies the strong Atiyah conjecture, has cohomological dimension two and $b_2^{(2)}(G) = 0$, then $G$ is actually locally indicable.

\cref{npi} completes a weaker version of Wise's proposed first step for proving Baumslag's conjecture, partially solving \cite[Conjecture 12.11]{Wi20}. It turns out that this is sufficient. The second step in the proof is a kind of promotion of the property $\fp_2\left (\Z\right ) $ to the property of being finitely presented.

\begin{teo} 
\label{equivalence}
Let $k$ be a commutative ring with $1\neq 0$, let $G$ be a group acting on a tree $\mathcal{T}$ with coherent vertex stabilisers. If $H\le  G$ is of type $\fp_2(k) $, then $H$ is finitely presented. In particular, if $G$ is homologically coherent, then $G$ is coherent.
\end{teo}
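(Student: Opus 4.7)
Since $H$ is of type $\fp_2(k)$ it is in particular finitely generated. First, if $H$ fixes a vertex $v$ of $\mathcal{T}$, then $H\le G_v$ is a finitely generated subgroup of a coherent group and is therefore finitely presented. In the remaining case, the plan is to realise $H$ as the fundamental group of a finite graph of groups whose vertex groups sit inside coherent ambient groups, and then promote finite generation of the pieces to a finite presentation of $H$.

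Assume therefore that $H$ has no global fixed point on $\mathcal{T}$. By a standard Bass--Serre argument (compare Serre's \emph{Trees}), a finitely generated group acting on a tree without a global fixed vertex admits a unique minimal invariant subtree $\mathcal{T}_H\subseteq\mathcal{T}$ on which $H$ acts cocompactly, so that the quotient graph $\Gamma:=H\backslash\mathcal{T}_H$ is finite (one verifies this by taking the convex hull of the orbit of a basepoint under a finite generating set, which is a finite subtree whose $H$-translates form a connected $H$-invariant subtree containing $\mathcal{T}_H$). Passing to a subdivision if necessary to avoid inversions, Bass--Serre theory presents $H$ as the fundamental group of the finite graph of groups $(\mathcal{H},\Gamma)$ whose vertex and edge groups are the $H$-stabilisers of lifts in $\mathcal{T}_H$; each such stabiliser is a subgroup of a conjugate of some $G_w$ and hence of a coherent group.

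The crucial step is to extract from the $\fp_2(k)$ hypothesis on $H$ that every vertex and edge group of $(\mathcal{H},\Gamma)$ is finitely generated. For this I invoke Bieri's homological criterion for graphs of groups: the fundamental group of a finite graph of groups is of type $\fp_n(k)$ if and only if all its vertex groups are of type $\fp_n(k)$ and all its edge groups are of type $\fp_{n-1}(k)$. Applied with $n=2$, this forces every vertex group to be $\fp_2(k)$ and every edge group to be $\fp_1(k)$, and in particular forces all of them to be finitely generated; by coherence of the ambient $G_w$, each vertex group is then finitely presented. A finite presentation of $H$ is now assembled from the standard presentation of $\pi_1(\mathcal{H},\Gamma)$: take the generators of the vertex groups together with one stable letter per edge outside a spanning tree of $\Gamma$, and take the vertex-group relations together with one edge relation per generator of each edge group. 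Finiteness of $\Gamma$, finite presentability of the vertex groups, and finite generation of the edge groups make this presentation finite, so $H$ is finitely presented. The ``in particular'' clause is then immediate, since homological coherence of $G$ means every finitely generated subgroup is of type $\fp_2(\Z)$. The main obstacle is the forward direction of Bieri's criterion---deducing finite generation of the edge groups from the $\fp_2(k)$ hypothesis on the whole group---which uses a Mayer--Vietoris analysis of the graph of groups together with the characterisation of $\fp_n$ via commutation of group homology with direct products.
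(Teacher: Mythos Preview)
Your argument contains a genuine gap at the crucial step. The ``only if'' direction of the criterion you attribute to Bieri is false: it is not true that whenever the fundamental group of a finite graph of groups is of type $\fp_n(k)$, the vertex groups must be $\fp_n(k)$ and the edge groups $\fp_{n-1}(k)$. A clean counterexample fitting exactly the hypotheses of the theorem is the following. Let $G=F_2=\langle a,b\rangle$ act on the real line $\mathcal{T}$ via the surjection $F_2\to\Z$, $a\mapsto 0$, $b\mapsto 1$. The vertex and edge stabilisers are all equal to the kernel $N=\normal{a}$, an infinite-rank free group, hence coherent; so the hypotheses of the theorem are satisfied. Take $H=G=F_2$, which is of type $\fp_\infty$. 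Your procedure produces the minimal $H$-tree (the whole line), whose quotient is a single loop, giving the decomposition of $F_2$ as the HNN-extension $N\ast_{N}$ with vertex and edge group $N$. Your criterion would then force $N$ to be finitely generated, which it is not. The Mayer--Vietoris/product-commutation heuristic you sketch cannot rescue this: in the long exact sequence the relevant map $H_1(N;-)\to H_1(N;-)$ is $\mathrm{id}-b_*$, and no finiteness of $H_*(G;-)$ controls the size of $H_1(N;-)$ itself. What Bieri actually proves goes in the other direction, and the partial converse requires the edge group to be $\fp_{n-1}$ as a hypothesis, not a conclusion.

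The paper's proof avoids this trap by \emph{not} insisting that the graph of groups coming from the minimal subtree has finitely generated pieces. Instead it produces a \emph{different} cocompact $H$-tree $\mathcal{S}$, with finitely generated vertex and edge stabilisers, together with an $H$-equivariant domination $\mathcal{S}\to\mathcal{T}_H$. This is done by first using the Bieri--Strebel lemma to write $H$ as a quotient $\widetilde{H}\twoheadrightarrow H$ of a finitely presented group with kernel $N$ satisfying $N_{\mathrm{ab}}\otimes k=0$; then applying Dunwoody/Guirardel--Levitt accessibility to the finitely presented $\widetilde{H}$ acting on $\mathcal{T}_H$ to obtain such an $\mathcal{S}$ for $\widetilde{H}$; and finally observing that the image of $N$ in the quotient tree acts freely, hence is free, hence trivial (since its abelianisation vanishes after tensoring with $k$), so $\mathcal{S}$ descends to an $H$-tree. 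The domination map guarantees that the (now finitely generated) vertex stabilisers of $\mathcal{S}$ still lie in vertex stabilisers of $G$ on $\mathcal{T}$, where coherence can be applied. In the $F_2$ example above, the dominating tree is simply the Cayley tree of $F_2$ with its free action, mapping to the line by collapsing $a$-edges.
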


From \cref{equivalence} it follows that for a large class of groups, coherence is equivalent to homological coherence, see \cref{hierarchy}. Equipped with \cref{npi} and \cref{equivalence}, we settle Baumslag's conjecture by appealing to the classic Magnus--Moldavanskii hierarchy.

The combination of \cref{npi} and \cref{equivalence} provides a powerful tool for proving coherence of groups. Before discussing the proof of the second part of \cref{main}, we mention some further applications.

Our method gives a new proof of the coherence of an ascending HNN-extension of a free group, originally due to Feighn--Handel \cite{FH99}. The homological coherence in this case can be obtained as a consequence of the vanishing of the second $L^2$-Betti number of these groups and the coherence is then concluded from Theorem \ref{equivalence}.

Jankiewicz--Wise \cite[Conjecture 4.7]{jankiewicz_16} put forward a conjectural picture for the coherence of certain Coxeter groups of virtual cohomological dimension two. Following the same strategy, we are also able to confirm one direction of this conjecture; that is, we show that all such Coxeter groups conjectured to be coherent are indeed coherent. See \cref{sec:raags} for the relevant definitions.

\begin{teo}
\label{coxeter}
Let $G$ be a Coxeter group and suppose that ${\chi}(H)\leqslant 0$ for each Coxeter subgroup $H\le  G$ generated by at least three elements. Then $G$ is coherent.
\end{teo}

An often studied generalisation of one-relator groups is the family of groups with staggered presentations. Such groups appear naturally in the study of one-relator groups and one-relator products, see \cite{howie_84} and \cite{lyndon_01}. The reader is referred to \cref{sec:staggered} for the precise definition of such groups. By work of Helfer--Wise \cite{HW16}, a torsion-free group with a staggered presentation is the fundamental group of a two-complex with non-positive immersions. Hence, the exact same strategy for proving \cref{main}(1) also shows that torsion-free groups with staggered presentations are coherent. Groups with staggered presentations where each relator is a proper power are coherent by work of Wise \cite[Theorem 5.7]{wise_20}.  Finitely generated groups from both of these subclasses have finite index subgroups that are fundamental groups of two-complexes with non-positive immersions. However, when some of the relators are proper powers and some are not, this is no longer the case (see \cref{staggered_example}) and so we cannot directly use \cref{npi}. Nevertheless, with a little extra work, we establish the coherence of groups with staggered presentations, solving a conjecture of Wise \cite[Conjecture 14.10]{Wi20}.

\begin{teo} \label{staggered_coherent}
Groups with staggered presentations are coherent.
\end{teo}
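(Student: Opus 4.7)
The plan is to combine \cref{equivalence} with a Magnus-type hierarchy for staggered presentations, together with a careful treatment of torsion. I would set up an induction on the number of relators. The base case of zero relators is a free group, which is coherent. For the inductive step, given a group $G$ with staggered presentation $\langle X \mid r_1,\dots,r_n\rangle$, I would invoke the classical staggered analogue of the Magnus--Moldavanskii hierarchy (in the spirit of Lyndon and Howie): by eliminating an extremal generator one expresses $G$ as the fundamental group of a finite graph of groups whose vertex groups admit staggered presentations with strictly fewer relators and whose edge groups are free Magnus-type subgroups. By the inductive hypothesis, the vertex groups are coherent.

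Given coherent vertex stabilisers, \cref{equivalence} reduces the coherence of any finitely generated $H\le G$ to proving $H$ is of type $\fp_2(\Z)$. In the sub-case where no relator is a proper power, $G$ is torsion-free and locally indicable, and by Helfer--Wise the presentation $2$-complex has non-positive immersions, so \cref{npi}(1) applies directly and yields homological coherence. In the sub-case where every relator is a proper power, \cite[Theorem 5.7]{wise_20} covers coherence, and combined with the hierarchy the conclusion follows. The genuinely new case — flagged by the authors in \cref{staggered_example} — is the mixed case where some relators are proper powers and others are not.

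To handle the mixed case, I would pass to the torsion-free \emph{root cover} $G' = \langle X \mid s_1,\dots,s_n\rangle$, where $s_i$ is the root of $r_i$ when $r_i$ is a proper power and $s_i = r_i$ otherwise. Then $G'$ has a staggered presentation without proper powers, so $G'$ is homologically coherent by the first sub-case above, and the natural surjection $G' \twoheadrightarrow G$ has kernel equal to the normal closure of $\{s_i^{k_i}\}$. The staggered hierarchy applies compatibly to $G'$ and $G$: each splitting of $G$ lifts to a splitting of $G'$ with the same tree, and the extra relations $s_i^{k_i}=1$ are absorbed into individual vertex groups, where they act as finite cyclic quotients attached in a controlled way. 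Using this compatibility, I would argue that a finitely generated $H\le G$ lifts to a finitely generated $\widetilde H \le G'$ up to finite cyclic vertex stabilisers, and that the $\fp_2(\Z)$ property of $\widetilde H$, combined with the finiteness of the torsion introduced along the Bass--Serre tree, forces $H$ itself to be $\fp_2(\Z)$.

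The main obstacle is this last quantitative transfer: homological coherence is not a formal quotient invariant, and one must verify that imposing the finite-order relations on the torsion-free cover preserves the finite presentation of the relation module for every finitely generated subgroup. I expect this to require an explicit Mayer--Vietoris or chain-complex argument along the graph of groups from the staggered hierarchy, exploiting that the newly introduced torsion is concentrated in a finite collection of finite cyclic subgroups of the vertex groups. Once this is in place, \cref{equivalence} upgrades the resulting $\fp_2(\Z)$ condition to finite presentability, completing the proof.
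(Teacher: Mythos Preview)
Your inductive setup via a Magnus-type hierarchy is reasonable, and the pure sub-cases (no proper powers, or every relator a proper power) are correctly disposed of. The genuine gap is in the mixed case, and it is more serious than the obstacle you flag.

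Your ``root cover'' $G' = \langle X \mid s_1,\dots,s_n\rangle$ is a \emph{quotient} of $G$, not a cover: since $r_i = s_i^{k_i}$ lies in $\normal{s_i}$, one has $\normal{r_1,\dots,r_n}\subseteq\normal{s_1,\dots,s_n}$ and the natural map goes $G \twoheadrightarrow G'$, not the other way round. (In the paper's notation this is the torsion-free quotient $\overline G$.) Homological coherence of a quotient says nothing about the original group, so your proposed transfer has no starting point. If instead you tried to pass to a torsion-free finite-index subgroup and apply \cref{npi} there, \cref{staggered_example} shows this fails in general: there exist finitely presented staggered groups that are not virtually torsion-free. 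There is thus no obvious torsion-free group \emph{above} $G$ to which the problem can be lifted, and no Mayer--Vietoris bookkeeping will manufacture one.

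The paper sidesteps homological coherence of $G$ altogether in the mixed case. It works with the slightly larger class of finite bireducible two-complexes and, peeling off one two-cell at a time, writes $\pi_1(X)\cong \pi_1(Z)*_{\pi_1(\Lambda)}\pi_1(X_\alpha)$ with free Magnus edge group $\pi_1(\Lambda)$. The key new input is \cref{Magnus}: Magnus subgroups of such groups have the finitely generated intersection property. This is proved either via the flat-module construction of \cref{key3} over the torsion-free quotient $\overline G$ (yielding in fact that Magnus subgroups are inert), or via NTPI for bireducible complexes (\cref{ntpi}) together with \cref{2betti}. Once f.g.i.p.\ is in hand, the Karrass--Solitar criterion (\cref{fgip_coherent}) gives coherence of $\pi_1(X)$ directly from coherence of the vertex groups, with no need to establish $\fp_2$ for arbitrary subgroups of $G$ and hence no need for \cref{equivalence} at this stage.
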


\subsection{Coherence of group algebras}

There are not many papers on coherent group rings. For example we could not find any reference for the coherence of $K[G]$, where $G$ is a free-by-cyclic group and $K$ is a field; although this result can be extracted easily from a result of Henneke--L\'{o}pez-\'{A}lvarez \cite{HL22}. In Corollary  \ref{mappingtorus} we prove the coherence of group algebras  of an ascending HNN-extension of a free group.

Other examples of coherent group rings come from general results on coherent rings.   If $k$ is a commutative Noetherian ring, by work of Bondal--Zhdanovskiy \cite{BZ15}, $k[G]$ is coherent if $G$ is the direct product of a  free  group and an abelian group and, by results of \.{A}berg \cite{Ab82} and Lam \cite{La77}, $k[G]$ is coherent if $G$ belongs to the smallest family of groups containing all virtually polycyclic groups and closed under amalgamated free products and HNN-extensions with   virtually polycyclic edge subgroups.   
We would also like to mention a conjecture for graded algebras: every graded algebra with a single defining relation is {\bf graded coherent}, meaning that an arbitrary  finitely generated left graded ideal is finitely presented (see \cite{Pi08} for partial results).

 Our proof of \cref{main}(2) splits into two cases. When $G$ has torsion, $G$ is virtually free-by-cyclic by \cite{KL23} and so $K[G]$ is coherent for any field $K$. When $G$ is torsion-free, the coherence of $K[G]$ follows from our proof of a more general result on fundamental groups of reducible two-complexes. Reducible two-complexes were first introduced by Howie in \cite{Ho82} and can be thought of as generalisations of staggered two-complexes in that they are `staggered on one side'.  

\begin{teo}\label{th:coherentgroupalgebra}
Let $G$ be the  fundamental group of a finite reducible two-complex  without proper powers and $K$ a field of characteristic $0$. Then the group algebra $K[G]$ is coherent. \end{teo}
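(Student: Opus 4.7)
The plan is to mirror the two-step strategy used for Theorem \ref{main}(1), suitably adapted to the algebra setting: first establish a homological finiteness property at the level of $K[G]$-modules, namely that every finitely generated left ideal is of type $\fp_2(K[G])$, and then promote this to finite presentation using a Bass-Serre hierarchy coming from the reducibility structure.

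For the hierarchy, I would observe that a reducible two-complex admits an ordering of its 2-cells such that peeling them off in reverse order produces, at each stage, a splitting of the fundamental group as an HNN-extension (or amalgamated free product) over a finitely generated free subgroup, with vertex groups themselves fundamental groups of smaller reducible subcomplexes without proper powers. The base case of the hierarchy is a finite graph, whose fundamental group $F$ is free; and $K[F]$ is a fir in the sense of Cohn, in particular coherent.

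The promotion step requires an algebra analogue of Theorem \ref{equivalence}: if $G$ acts on a tree with vertex stabilisers $G_v$ such that each $K[G_v]$ is a coherent ring, then every $K[G]$-module of type $\fp_2(K[G])$ is finitely presented. I would prove this from the Mayer--Vietoris long exact sequence for $\Tor^{K[G]}_*(-, M)$ attached to the Bass--Serre tree, using that each edge stabiliser group algebra $K[G_e]$ is a free group algebra, hence coherent, and that $\fp_2$ modules over a coherent ring are $\fp_\infty$; the induced resolutions of $M$ over the $K[G_v]$ can then be spliced to produce a finite presentation over $K[G]$.

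The homological finiteness step is where the main difficulty lies. I would use that a reducible two-complex $X$ without proper powers has $\pi_1(X)$ locally indicable (via the Brodskii--Howie theory used in the staggered setting), of cohomological dimension at most two, and with vanishing second $L^2$-Betti number (computed directly from the reducibility structure, matching Theorem \ref{npi}(2)). To convert this group-theoretic vanishing into an $\fp_2(K[G])$-statement for a finitely generated ideal $I \le K[G]$, I would invoke the Hughes-free division ring $\mathcal{D}_{K[G]}$, which exists in characteristic zero for locally indicable groups, and exploit its faithful flatness over $K[G]$ together with $\beta_2^{(2)}(G)=0$ to control $\Tor_2^{K[G]}(\mathcal{D}_{K[G]}, I)$; combined with cohomological dimension two, this yields a finitely generated module of relations for $I$. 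Feeding the resulting $\fp_2(K[G])$-property into the hierarchical promotion of the previous paragraph and inducting along the reducibility hierarchy then produces the coherence of $K[G]$.
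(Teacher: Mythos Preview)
Your proposal has a genuine gap at the crucial step, and the two-step architecture is built on a misunderstanding of what needs to be proved.

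First, the ``promotion'' step is vacuous. For modules over any ring, $\fp_2$ trivially implies $\fp_1$, and $\fp_1$ \emph{is} finite presentation. So if your step 1 succeeded --- every finitely generated left ideal of $K[G]$ is of type $\fp_2(K[G])$ --- you would already have coherence, with no hierarchy or algebra analogue of Theorem~\ref{equivalence} required. The group-theoretic promotion in Theorem~\ref{equivalence} is needed precisely because $\fp_2(\Z)$ for a \emph{group} does not imply finite presentation of the group; there is no such gap for modules.

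Second, and more seriously, your step 1 does not work as stated. You claim $\D_{K[G]}$ is faithfully flat over $K[G]$: it is not even flat (flatness would force $b_1^{(2)}(G)=0$, which fails already for free groups). What you actually need is that $\Tor_2^{K[G]}(\D_{K[G]},M)=0$ for \emph{every} left $K[G]$-module $M$, i.e.\ that $\D_{K[G]}$ has weak dimension at most $1$ as a right $K[G]$-module. The vanishing $b_2^{(2)}(G)=0$ gives this only for $M=K$; there is no mechanism in your sketch to pass from the single module $K$ to arbitrary $M$. This is exactly the content of Theorem~\ref{weak}, and the paper proves it by an inductive construction (Theorem~\ref{key}) showing that $\D_{K[G]}\otimes_K I_{K[G]}$ is flat, using the reducibility structure together with a torsion-freeness lemma for $\D_{K[G]}$-tensored modules (Lemma~\ref{injectiveaction}). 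Once weak dimension $\le 1$ is in hand, the paper applies the direct criterion of Corollary~\ref{crit} (global dimension $\le 2$ plus weak dimension $\le 1$ of the enveloping division ring implies coherence), and no Bass--Serre argument for modules is needed at all.
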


Howie proved in \cite[Theorem 4.2]{Ho82} that   the  fundamental groups of  reducible two-complexes  without proper powers  are locally indicable. Our proof of Theorem \ref{th:coherentgroupalgebra} uses the existence of the division ring $\D_{K[G]}$  (see Subsection \ref{dkg}).  It is  the  Hughes-free division $K[G]$-ring associated with locally indicable groups whose uniqueness was proved by Hughes in \cite{Hu70}.  When $K$ has characteristic 0, its existence  follows from the solution of the strong Atiyah conjecture for locally indicable  groups \cite{JL20}. If we knew that this division ring existed for an arbitrary $K$, then we would have the same result for such a $K$ as well.

A new property that we have discovered  in the case of  the  fundamental group of  a reducible two-complex  without proper powers   is the following result. This is the key ingredient of our proof of Theorem \ref{th:coherentgroupalgebra}.

\begin{teo}\label{weak}
Let $K$ be a field and let $G$ be the  fundamental group of  a finite reducible two-complex  without proper powers.  Assume that $\D_{K[G]}$ exists. Then as a right $K[G]$-module, $\D_{K[G]}$ is of weak dimension at most $1$.
\end{teo}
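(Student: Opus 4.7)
The plan is to show that $\Tor_2^{K[G]}(\D_{K[G]}, M) = 0$ for every left $K[G]$-module $M$, proceeding by induction on a suitable complexity of the reducible two-complex $X$. For the base case, when $X$ is a graph, the group $G$ is free, so $\operatorname{cd}(G) \le 1$ and $\Tor_n^{K[G]}(-, -) = 0$ for every $n \ge 2$; the conclusion is immediate.

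For the inductive step, invoke Howie's theorem \cite[Theorem 4.2]{Ho82} to conclude that $G$ is locally indicable, and choose a surjection $\phi \colon G \to \mathbb{Z}$ with kernel $N$ compatibly with the reducible ordering of the $2$-cells of $X$. Hughes-freeness of $\D_{K[G]}$ realises it as the Ore field of fractions of the skew Laurent polynomial ring $S = \D_{K[N]} *_\sigma \mathbb{Z}$, where $\sigma$ is the automorphism of $\D_{K[N]}$ induced by conjugation by any lift of a generator of $\mathbb{Z}$. In particular $S$ is a principal Ore domain and $\D_{K[G]}$ is flat as a right $S$-module. Viewing $S$ as a right $K[G]$-module it coincides with $\D_{K[N]} \otimes_{K[N]} K[G]$, and since $K[G]$ is free as a right $K[N]$-module, a standard change-of-rings argument yields $\Tor_n^{K[G]}(S, M) \cong \Tor_n^{K[N]}(\D_{K[N]}, M)$ for every $n$ and every left $K[G]$-module $M$ (viewed as a $K[N]$-module by restriction). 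Combining with the flatness of $\D_{K[G]}$ over $S$ produces the identity
\[
\Tor_2^{K[G]}(\D_{K[G]}, M) \;\cong\; \D_{K[G]} \otimes_S \Tor_2^{K[N]}(\D_{K[N]}, M),
\]
and so, provided the inductive hypothesis applies to $N$, the right-hand side vanishes.

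The main obstacle is defining a well-founded complexity measure on reducible two-complexes without proper powers --- a reducible analogue of the Magnus length --- and arranging at each inductive step that $N$ is the fundamental group of a (possibly infinite) reducible two-complex of strictly smaller complexity. Accommodating the typically infinitely generated kernel $N$ relies on the uniqueness of the Hughes-free division ring, which ensures that the change-of-rings formula behaves coherently under direct limits of finitely generated subgroups of $G$.
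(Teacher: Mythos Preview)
Your proposal correctly identifies the change-of-rings machinery (essentially the argument of \cref{mappingtorus}) but leaves the essential step as an acknowledged gap. The ``main obstacle'' you name is not a technicality: no Magnus--Moldavanskii-type hierarchy is available for general reducible complexes, and there is no reason to expect the kernel $N$ of a surjection $G \to \Z$ to decompose as a directed union of fundamental groups of finite reducible complexes of strictly smaller complexity. In the ascending HNN case $N$ is locally free, which is precisely what drives that argument; for a general reducible complex $N$ can be far from this. Without a well-founded complexity measure and a proof that passage to $N$ strictly decreases it, there is no inductive step, and the sketch does not constitute a proof. The direct-limit remark in your last sentence would also need real work even if such a hierarchy existed: one would have to identify $\D_{K[N]}$ with the direct limit of the right $K[N]$-modules $\D_{K[N_i]} \otimes_{K[N_i]} K[N]$.

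The paper takes a different route that stays entirely within finite reducible complexes and never analyses kernels of characters. It first proves an auxiliary flatness statement (\cref{key}): the left $K[G]$-module $\D_{K[G]} \otimes_K I_{K[G]}$ is flat. The induction there is on the number of two-cells, using the one-relator-product decomposition $G = G_1 * G_2 / \normal{w}$ furnished directly by reducibility. The inductive step combines Shapiro's lemma (to import flatness from the factors $G_i$), a Fox-derivative decomposition $w-1 = \alpha_1 + \alpha_2$ with $\alpha_i \in I_{K[G_i]}^{G_1*G_2}$, and a torsion-freeness lemma for modules of the form $V \otimes_K \D_{K[G]}$ (\cref{injectiveaction}), which uses Hughes-freeness but only at the level of finitely generated subgroups of $G$ itself. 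Once $\D_{K[G]} \otimes_K I_{K[G]}$ is known to be flat, the weak-dimension bound follows in one line from \cref{dual} by dimension shifting.
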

This property was known for free, limit and free-by-cyclic groups $G$.  It does not hold for   locally indicable groups in general: there are locally indicable groups  with non-trivial second $L^2$-Betti number (for example, if $G$ is the direct product of two non-abelian free groups). 

\subsection{The rank-1 Hanna Neumann conjecture}
Let $d\left (G\right ) $ denote the minimal number of generators of a group $G$.
The rank-1 Hanna Neumann conjecture  proposed by Wise in \cite{Wi05} and proved  independently by Helfer and Wise \cite{HW16} and Louder and Wilton \cite{LW17}, is the following  statement: if $W$ is a maximal cyclic subgroup of a free group $F$, then  for any subgroup $U$ of $F$,
$$\sum_{x\in W \backslash  F/U}  d \left (xUx^{-1} \cap W\right ) \leqslant 
  \left \{
   \begin{array}{cc} 
   d\left (U\right )  & \textrm{if\ }  U\le \normal{W} \\
   d\left (U\right )-1  & \textrm{if\ } U\not \le \normal{W} \end{array}\right . 
$$
 It is natural to ask for what family of subgroups $W$ of $F$ the same conclusion holds. In Subsection \ref{sec:staggered} we will introduce {\it strictly reducible}\ subgroups of a free group. Their relation with reducible two-complexes without proper powers is  the following: if the presentation complex of $\langle X|R \rangle$ is reducible without proper powers, then there exists a strictly reducible subgroup $W$ of the free group $F(X)$ such that $\langle X|R \rangle\cong F(X)/\normal{W}$. Using our approach we prove the following generalisation of the rank-1 Hanna Neumann conjecture.

\begin{teo}\label{HN}
Let $F$ be a free group, $U$ a subgroup of $F$ and $W$ a strictly reducible subgroup of $F$. Then
$$\sum_{x\in W \backslash  F/U}  d \left (xUx^{-1} \cap W\right ) \leqslant 
  \left \{
   \begin{array}{cc} 
   d\left (U\right )  & \textrm{if\ }  U\le \normal{W} \\
 d\left (U\right )-1  & \textrm{if\ } U\not \le \normal{W} \end{array}\right . 
$$
\end{teo}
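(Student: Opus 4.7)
My plan is to apply the division-ring framework established in this paper to the quotient $G = F / \normal{W}$. Since $W$ is strictly reducible, there is a presentation of $G$ whose associated 2-complex is reducible without proper powers, so by Theorem \ref{weak}, taking $K = \Q$, the Hughes-free division ring $\D = \D_{\Q[G]}$ has weak dimension at most $1$ as a right $\Q[G]$-module. This vanishing is the technical heart of the argument.

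The next step is to interpret the left-hand side homologically. Let $L = \normal{W} \trianglelefteq F$ and let $\overline U \leqslant G$ denote the image of $U$ in $G$. The double coset space $W \backslash F / U$ is in natural bijection with the $U$-orbits on the conjugates of $W$ in $F$, and for a representative $x$ the pointwise stabiliser is $x U x^{-1} \cap W$. In the reducible presentation 2-complex of $G$ and its cover classified by the projection $U \to \overline U$, each double coset indexes a connected component of the preimage of the 2-cells, and $d(x U x^{-1} \cap W)$ equals the first Betti number contributed by that component. The sum on the left-hand side therefore equals the rank of a certain subgroup of $U \cap L$ detectable by 2-cells.

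To obtain the bound, I would take the chain complex of the universal cover of this 2-complex, regarded as a complex of free right $\Q[G]$-modules, apply $- \otimes_{\Q[G]} \D$, and then compute the $U$-homology using the Sylvester rank function $\dim_\D$. Theorem \ref{weak} guarantees the vanishing of the second $\D$-homology, so additivity of $\dim_\D$ across short exact sequences forces an identity of the shape $\sum_{x} d(x U x^{-1} \cap W) = d(U) - \dim_\D H_0(U; \D)$. When $U \not\leqslant L$, the image $\overline U$ is non-trivial, $\dim_\D H_0(U; \D) = 1$, and one recovers the bound $\overline d(U)$. When $U \leqslant L$, the module $\D$ is trivial over $U$, $\dim_\D H_0(U; \D) = 0$, and one recovers $d(U)$.

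The main obstacle I expect is the precise identification of the left-hand side with this $\D$-theoretic invariant. In the original rank-1 Hanna Neumann conjecture for maximal cyclic $W$, this identification is direct because each conjugate of $W$ contributes a single 2-cell with cyclic stabiliser. For strictly reducible $W$, several relators coexist and the bookkeeping between double cosets and components of the preimage of 2-cells is more delicate: one must match each stabiliser $x U x^{-1} \cap W$ with the correct component and argue that the `without proper powers' hypothesis rules out any auxiliary multiplicities. Showing that the reducible stratification of the 2-complex is compatible with the decomposition coming from double cosets, so that the homological count reproduces the geometric sum exactly, is the essential combinatorial step; once this is in place, the rest of the argument follows the template already used in the proof of Theorem \ref{th:coherentgroupalgebra}.
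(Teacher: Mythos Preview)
The approach has a real gap: you are working over $\Q[G]$ and invoking Theorem~\ref{weak}, but the computation must take place over $\Q[F]$, because $W$ and $U$ live in $F$ and $W$ becomes trivial in $G$. The two-cells of the presentation complex of $G$ are indexed by the individual relators $r_i$, so the double cosets visible in a cover are the $\langle r_i\rangle\backslash F/U$, not $W\backslash F/U$; your proposed geometric identification of the left-hand side therefore does not hold. The correct identification is purely algebraic and has no combinatorial obstacle: decomposing $\Q[F/U]$ into $W$-orbits gives $\Q[F/U]\cong\bigoplus_{x}\Q[W/(xUx^{-1}\cap W)]$ as left $\Q[W]$-modules, and since $W$ is free one reads off $\sum_x d(xUx^{-1}\cap W)=\dim_\Q\Tor_1^{\Q[W]}(\Q,\Q[F/U])$.

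The ingredient that replaces Theorem~\ref{weak} is Theorem~\ref{key2}: the left $\Q[F]$-module $\D_{\Q[G]}\otimes_\Q\bigl(I_{\Q[F]}/{}^FI_{\Q[W]}\bigr)$ is flat. After replacing $\Q$ by $\D=\D_{\Q[G]}$ (harmless, since $W$ acts trivially on $\D$) and applying Shapiro to pass to $\Tor_1^{\Q[F]}$, this flatness (via Lemma~\ref{dual}) kills the contribution of $(I_{\Q[F]}/{}^FI_{\Q[W]})\otimes_\Q\Q[F/U]$ and leaves the bound $\dim_\D\Tor_1^{\Q[F]}(\D,\Q[F/U])=d(U)-1+\dim_\D\bigl(\D\otimes_{\Q[F]}\Q[F/U]\bigr)$. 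Your values of $\dim_\D H_0(U;\D)$ are also inverted: when $\overline U\neq 1$ some $u-1$ is a unit in $\D$ and the coinvariants vanish, while when $U\leqslant\normal{W}$ the action is trivial and the coinvariants are all of $\D$; the correct Euler-characteristic identity is $d(U)-1+\dim_\D H_0$, not $d(U)-\dim_\D H_0$, so your two sign errors happened to cancel. Finally, the method yields only an inequality, not the equality you assert.
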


The paper is organized as follows. In Section \ref{sect:prel} we explain the  preliminary results used in the paper. In particular, we introduce the notions of complexes with non-positive immersion, staggered and  reducible complexes and a variation, bireducible complexes. We also review the principal facts about the Hughes-free division rings. In Section \ref{sect:cohgral} we develop a tool to prove the coherence of group algebras and homological coherence. As applications we prove Theorem \ref{npi} and show in Theorem \ref{mappingtorus} that the group algebra of an ascending  HNN-extension of a free group is coherent. Section \ref{sect:homcoh} is devoted to the proof of Theorem \ref{equivalence}. We finish this section with the proof of the first part of Theorem \ref{main}. In Section \ref{sect:flat} we construct several flat modules which allow us to control different $\Tor$ functors. In particular, we prove  Theorem \ref{weak} and, combining it with results of Section \ref{sect:cohgral}, we obtain Theorem \ref{th:coherentgroupalgebra} and  the second part of Theorem \ref{main}. Section \ref {sect:appl} contains further applications. In particular,  there we prove Theorems \ref{coxeter}, \ref{staggered_coherent} and \ref{HN}. We finish the paper with Section \ref{sect:final} where we discuss possible extensions of our results.

\section*{Acknowledgments}

The work of the first author  is partially supported by the grant  PID2020-114032GB-I00 of the Ministry of Science and Innovation of Spain  and by the ICMAT Severo Ochoa project  CEX2019-000904-S4. The work of the second author has received funding from the European Research Council (ERC) under the European Union's Horizon 2020 research and innovation programme (Grant agreement No. 850930).

We would like to thank Sam Fisher for pointing us to Proposition \ref{vanishing2nd} which simplified several of our previous arguments. We would like to thank Dawid Kielak for his valuable comments on the material in \cref{sect:homcoh}. We would also like to thank Jack Button for providing a simplification of the proof of \cref{quotient_tree} and Thomas Delzant for pointing out the work of Dicks--Dunwoody \cite{dicks_89}. We would like to thank the anonymous referees whose suggestions have improved this article.

\section{Preliminaries} \label{sect:prel}

\subsection{General notations}
All rings in this paper are associative and have an identity element.  All   ring homomorphisms send  the identity   to the identity. Generally, unless stated otherwise, we shall use $K$ to denote a field, $k$ to denote a commutative ring and $R$ to denote an arbitrary ring. By an {\bf $R$-ring} we understand a  ring homomorphism $\varphi: R\to S$. We will often refer to $S$ as an $R$-ring and omit the homomorphism $\varphi$ if $\varphi$ is clear from context.
Two $R$-rings $\varphi_1:R\to S_1$ and $\varphi_2:R\to S_2$ are said to be {\bf isomorphic}  if there exists a ring isomorphism $\alpha: S_1\to S_2$ such that $\alpha\circ \varphi_1=\varphi_2$.

For a ring $R$, a left $R$-module $M$ and $a\in R$, we put $$\Ann_M\left (a\right ) =\{m\in M\colon a\cdot m=0\}.$$

 Let $G$ be a group and $k$ a commutative ring with 1. We denote by $I_{k[G]}$ the augmentation ideal of $k[G]$. If $H$ is a subgroup of $G$ we denote by ${}^GI_{k[H]}$ the left ideal of $k[G]$ generated by $I_{k[H]}$ and by $I_{k[H]}^G$ the right ideal of $k[G]$ generated by $I_{k[H]}$.  Recall the following standard result (see, for example, \cite[Lemma 2.1]{Ja24}).
 
 \begin{lem} \label{augmentation} Let $H\le T$ be      subgroups of a group $G$ and $k$ a commutative ring with 1. Then the following holds.
\begin{enumerate}
\item The canonical map $$  I_{k[H]}\otimes_{k[H]} k[G]\to I_{k[H]}^G$$ sending  $a\otimes b$ to $ab$, is an isomorphism of right $k[G]$-modules.

\item  The canonical map  $ \left (I_{k[T]}/I_{k[H]}^T\right )  \otimes_{k[T]} k[G]\to I_{k[T]}^G/I_{k[H]}^G $, which sends\newline $ \left (a+I_{k[H]}^T\right ) \otimes b$ to $ab+I_{k[H]}^G$, is an isomorphism of $k[G]$-modules.
\end{enumerate}
\end{lem}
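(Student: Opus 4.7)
The plan is to reduce both parts to the standard fact that $k[G]$ is a free left module over $k[H]$ (and over $k[T]$), with basis any right transversal; hence $k[G]$ is flat as a left $k[H]$-module and as a left $k[T]$-module. Both isomorphisms will then follow from applying flatness to the appropriate short exact sequence.

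For part (1), I would start from the short exact sequence of right $k[H]$-modules
$$0 \to I_{k[H]} \to k[H] \to k \to 0,$$
where the right map is the augmentation. Applying $-\otimes_{k[H]} k[G]$ preserves exactness by flatness, and the canonical identification $k[H]\otimes_{k[H]} k[G]\cong k[G]$ sends $a\otimes b$ to $ab$. Tracking the image of $I_{k[H]}\otimes_{k[H]} k[G]$ under this identification, one sees it is exactly the right $k[G]$-submodule of $k[G]$ generated by $I_{k[H]}$, which is by definition $I_{k[H]}^G$. Injectivity of the canonical map is precisely the statement of flatness, so this yields (1).

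For part (2), I would apply $-\otimes_{k[T]} k[G]$ to the short exact sequence of right $k[T]$-modules
$$0 \to I_{k[H]}^T \to I_{k[T]} \to I_{k[T]}/I_{k[H]}^T \to 0.$$
Flatness of $k[G]$ over $k[T]$ keeps this exact. Applying part (1) first with $G$ replaced by $T$ identifies $I_{k[H]}^T \cong I_{k[H]}\otimes_{k[H]} k[T]$, and then using associativity of the tensor product together with part (1) again gives $I_{k[H]}^T\otimes_{k[T]} k[G]\cong I_{k[H]}\otimes_{k[H]} k[G]\cong I_{k[H]}^G$. Similarly $I_{k[T]}\otimes_{k[T]} k[G]\cong I_{k[T]}^G$. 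The induced map on quotients is exactly the canonical map of the statement, and the resulting three-term exact sequence
$$0 \to I_{k[H]}^G \to I_{k[T]}^G \to (I_{k[T]}/I_{k[H]}^T)\otimes_{k[T]} k[G] \to 0$$
gives the desired isomorphism. There is no real obstacle here; the only thing to double-check is bookkeeping of left versus right module structures and the verification that the map in (2) is well defined on the tensor product (which is automatic, since $I_{k[H]}^T$ is killed by construction).
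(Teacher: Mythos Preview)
Your argument is correct. The paper does not actually prove this lemma; it states it as a standard result and cites \cite[Lemma 2.1]{Ja24} for a reference, so there is nothing in the paper to compare against. Your proof via flatness of $k[G]$ as a left $k[H]$-module (respectively $k[T]$-module) is exactly the sort of routine verification one expects for such a statement.
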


Given two left $k[G]$-modules $M$ and $N$,  $M\otimes_k N$ becomes a left $k[G]$-module if we define $g\left (m\otimes  n\right ) =gm\otimes  gn$.

\subsection{Projective, global and weak  dimensions}

Let $R$ be a ring and $M$ a left $R$-module. We say that $M$ is of {\bf projective dimension} at most $n$ if $M$ has a projective resolution of length $n$. A ring $R$ is of (left)  {\bf global dimension} at most $n$, if every left $R$-module has a projective resolution of length $n$. We will often use the following result about  global dimensions of group algebras.

\begin{pro}\label{gldim}
Let $G$ be a group of cohomological dimension $n$. Then for any field $K$, the ring $K[G]$ is of global dimension at most $n$.
\end{pro}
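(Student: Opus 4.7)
The plan is to reduce the statement, via two standard changes of coefficients, to the defining property that $\text{cd}(G)=n$ means the trivial module $\Z$ admits a projective resolution of length $n$ over $\Z[G]$.

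First, I would promote such a $\Z[G]$-resolution $P_\bullet \to \Z$ to a projective $K[G]$-resolution of the trivial module $K$ by applying $K \otimes_\Z -$. The two facts to verify are: (a) each $K \otimes_\Z P_i$ is a projective $K[G]$-module, and (b) the complex $K \otimes_\Z P_\bullet \to K$ stays exact. For (a) note that $K \otimes_\Z \Z[G] = K[G]$, so $K \otimes_\Z -$ sends free $\Z[G]$-modules to free $K[G]$-modules and therefore projectives to projectives. For (b) observe that any projective $\Z[G]$-module is a summand of a direct sum of copies of $\Z[G]$, which is $\Z$-free; hence every $P_i$ is flat (in fact free) as a $\Z$-module, and the exactness is preserved.

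Second, for an arbitrary left $K[G]$-module $M$, I would run the classical diagonal-action trick. Denote the resolution just obtained by $Q_\bullet \to K$, and consider $M \otimes_K Q_\bullet \to M$ with the diagonal $G$-action on the tensor products. Projectivity of each term follows from the $K[G]$-linear isomorphism
\[
M \otimes_K K[G] \;\xrightarrow{\;\cong\;}\; M_{\mathrm{triv}} \otimes_K K[G], \qquad m \otimes g \mapsto g^{-1}m \otimes g,
\]
where the right-hand side carries the trivial action on the first factor and the regular action on the second; this identifies $M \otimes_K K[G]$ with a free $K[G]$-module, and the summand argument then shows each $M \otimes_K Q_i$ is $K[G]$-projective. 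Exactness of $M \otimes_K Q_\bullet$ is automatic since $K$ is a field and every $K$-module is flat. Thus $M$ has a $K[G]$-projective resolution of length at most $n$, and so $\mathrm{gl.dim}\, K[G] \leqslant n$.

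The argument is classical and should be written compactly; the only real bookkeeping is checking (b) in the first step, which rests on the elementary but crucial observation that $\Z[G]$-projectives are $\Z$-free, so that one can tensor with $K$ over $\Z$ without introducing higher $\Tor$ terms. After that, the diagonal-action trick is standard and could even be cited from Brown's \emph{Cohomology of Groups}.
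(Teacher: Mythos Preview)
Your proposal is correct and is essentially the same argument as the paper's: both use the diagonal-action trick to turn a length-$n$ projective $\Z[G]$-resolution of $\Z$ into a length-$n$ projective $K[G]$-resolution of an arbitrary module $M$. The only difference is organizational: the paper tensors the $\Z[G]$-resolution directly with $M$ over $\Z$ in a single step, whereas you factor this as first passing to a $K[G]$-resolution of $K$ via $K\otimes_{\Z}-$ and then tensoring with $M$ over $K$; since $M\otimes_K(K\otimes_{\Z}P_i)\cong P_i\otimes_{\Z}M$ as $K[G]$-modules, the two routes yield the same resolution.
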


\begin{proof}
Since $G$ is of cohomological dimension $n$, there exists an exact sequence of left projective $\Z[G]$-modules
$$0\to P_n\to\ldots \to P_1\to P_0\to \Z\to 0.$$
Let $M$ be a left $K[G]$-module. After applying $\_ \otimes _{\Z} M$ we obtain the following exact sequence of $K[G]$-modules.
$$0\to P_n\otimes _{\Z} M\to\ldots \to P_1\otimes _{\Z} M\to P_0\otimes _{\Z} M\to M\to 0.$$
If $P$ is a left projective $\mathbb{Z}[G]$-module, then there exists a left $\mathbb{Z}[G]$-module $Q$ such that  
$P \oplus Q \cong \bigoplus_{i \in I} \mathbb{Z}[G]$.  
Thus, $(P \otimes_{\mathbb{Z}} M) \oplus (Q \otimes_{\mathbb{Z}} M) \cong \bigoplus_{i \in I} (\mathbb{Z}[G] \otimes_{\mathbb{Z}} M)$.  
Fixing a $K$-basis $\{m_j\}$ of $M$, we see that $1 \otimes m_j$ is a $K[G]$-basis of $\mathbb{Z}[G] \otimes_{\mathbb{Z}} M$, and so, $P \otimes_{\mathbb{Z}} M$ is projective.  
Thus, $P_i \otimes_{\mathbb{Z}} M$ are projective, and the ring $K[G]$ is of global dimension at most $n$.
\end{proof}

 The {\bf weak dimension} of  a right $R$-module $M$  is the largest $i$ for which there exists a left $R$-module $N$ such that $\Tor^R_i\left (M,N\right ) \ne 0$.

\subsection{Hughes-free,  Linnell and Dubrovin division rings}\label{dkg}
Let $G$ be a group. In this subsection we will assume that $K$ is a field and to simplify the exposition   we will  only consider the group algebra $K[G]$. However, we want to underline that all the definitions and results can be easily extended to the case of crossed products $E*G$, where $E$ is  a division ring.

Let $\phi: K[G]\to \D$ be a  division $K[G]$-ring. Let $N\le H\le G$ be subgroups of $G$. We denote by $\D_H$ the division closure of $\phi(K[H])$ in $\D$. We say that $\D$ is (left) {\bf $(N,H)$-free} if the  map $$\D_N\otimes_{K[N]} K[H]\to \D,\ d\otimes   a\mapsto d\phi(a),$$ is injective. An alternative reformulation of $(N,H)$-freeness is the following: if $q_1,\ldots, q_n\in H$ are in  different right $N$-cosets, then for any non-zero elements  $d_1,\ldots, d_n\in \D_N$, the sum  $\sum_{i=1}^n d_i\phi(q_i)$ is not equal to zero. It is clear that if $N\le H_1\le H_2\le G$ and $\D$ is $(N,H_2)$-free, then it is also $(N,H_1)$-free.

This property appeared in the work of Hughes \cite{Hu70} in the context of locally indicable groups. Let $G$ be a locally indicable group and $\phi: K[G]\to \D$ a  division $K[G]$-ring. We say that  $\phi: K[G]\to \D$ (or simply $\D$, when $\phi$ is clear from context) is {\bf Hughes-free} if $\D$ is epic (i.e. $\D=\D_G$) and $(N,H)$-free for any pair $N\unlhd H$ of subgroups  of $G$ with $H/N\cong \Z$. A very important contribution of Hughes is the following result proved in \cite{Hu70} (see also, \cite{DHS04} and \cite[Theorem 5.2]{JL20}).
\begin{teo}[Hughes]\label{hughes}
Let $K$ be a field and $G$  a locally indicable group. Then  up to $K[G]$-isomorphism  there exists at most one Hughes-free division $K[G]$-ring.
\end{teo}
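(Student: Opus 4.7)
The plan is to suppose we are given two Hughes-free division $K[G]$-rings $\phi_1 \colon K[G] \to \D_1$ and $\phi_2 \colon K[G] \to \D_2$ and to construct a $K[G]$-isomorphism $\alpha \colon \D_1 \to \D_2$. Since each $\D_i$ is epic, every one of its elements lies in $(\D_i)_H$ for some finitely generated subgroup $H \le G$, so by a direct-limit argument it is enough to construct a $K[H]$-isomorphism $\alpha_H \colon (\D_1)_H \to (\D_2)_H$ for each finitely generated $H \le G$, compatibly with inclusions $H \le H'$. Once each $\alpha_H$ is shown to be \emph{unique}, this compatibility is automatic, so the whole theorem reduces to establishing existence and uniqueness of $\alpha_H$ in the finitely generated case.

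For a fixed finitely generated $H$, I would use local indicability to choose a surjection $\pi \colon H \twoheadrightarrow \Z$ with kernel $N$ and an element $t \in H$ with $\pi(t) = 1$. Hughes-freeness provides the injection $(\D_i)_N \otimes_{K[N]} K[H] \hookrightarrow (\D_i)_H$, and conjugation by $t$ restricts to a ring automorphism $\sigma_i$ of $(\D_i)_N$ extending the one coming from $K[N]$. Thus $(\D_i)_H$ is the division closure of the skew Laurent polynomial ring $(\D_i)_N[t^{\pm 1};\sigma_i]$, so any $K[N]$-isomorphism $\alpha_N \colon (\D_1)_N \to (\D_2)_N$ that intertwines $\sigma_1$ with $\sigma_2$ would extend in a unique way to the desired $\alpha_H$.

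The chief obstacle is that $N$ is rarely finitely generated, so the inductive hypothesis does not apply to $N$ directly. The remedy is to write $(\D_i)_N$ as the directed union of $(\D_i)_M$ as $M$ ranges over finitely generated subgroups of $N$; each such $(\D_i)_M$ is a Hughes-free division $K[M]$-ring by restriction, since $(N',H')$-freeness inside $\D_i$ for pairs $N' \unlhd H' \le M$ is inherited. Compatible isomorphisms $\alpha_M$ on these pieces, produced by the induction together with their uniqueness, then glue into a $\sigma_i$-equivariant $\alpha_N$, which extends to $\alpha_H$ as above. Arranging the induction so that it is genuinely well-founded — which is delicate, because a finitely generated locally indicable group need not admit an obvious complexity-reducing surjection to $\Z$ — is the technical heart of the argument; once accomplished, the direct limit of the $\alpha_H$ over finitely generated $H \le G$ assembles into the desired $K[G]$-isomorphism $\alpha$.
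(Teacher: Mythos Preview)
The paper does not actually prove this theorem: it is stated as a result of Hughes with a citation to \cite{Hu70} and pointers to modern treatments in \cite{DHS04} and \cite[Theorem 5.2]{JL20}. So there is no ``paper's own proof'' to compare against here.

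As for your proposal on its own merits: the overall architecture is correct and matches the standard strategy. Reducing to finitely generated $H$, using a surjection $H\twoheadrightarrow\Z$ with kernel $N$, identifying $(\D_i)_H$ with the Ore localisation of the skew Laurent ring $(\D_i)_N[t^{\pm1};\sigma_i]$, and assembling $(\D_i)_N$ as a directed union over finitely generated $M\le N$ are all exactly the moves one makes.

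However, you have correctly identified---and then sidestepped---the entire content of the theorem. The sentence ``Arranging the induction so that it is genuinely well-founded \ldots\ is the technical heart of the argument; once accomplished, \ldots'' is not a proof step but a statement that the hard part remains to be done. The difficulty is real: the finitely generated subgroups $M\le N$ to which you descend are not ``smaller'' than $H$ in any obvious sense (they have the same cardinality, may have larger rank, and the process does not terminate). The actual proofs (Hughes' original, and the cleaner versions in \cite{DHS04} and \cite{JL20}) resolve this not by induction on $H$ at all, but by induction on the \emph{complexity of individual elements} of $\D_i$---roughly, the depth of the rational expression needed to build an element from $K[G]$. One shows that any such expression only involves finitely many group elements, and the Hughes-free hypothesis forces the value of the expression in $\D_1$ and $\D_2$ to agree under the prospective isomorphism, by an induction on complexity that genuinely decreases at each step. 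Your outline does not contain this idea, so as written it is a plausible-sounding but circular sketch rather than a proof.
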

In view of this result, if $G$ is locally indicable and the Hughes-free division $K[G]$-ring exists, we will denote it by $\D_{K[G]}$.
It was  conjectured that, for a given locally indicable group $G$ and a field K, $\D_{K[G]}$ always exists and this was proven in \cite[Corollary 6.7]{JL20} in the case where $K$ is of characteristic 0. The reader can consult \cite{Ja21} to see what is known  at this moment about the general case.

In order to generalize the notion of Hughes-free division ring to an arbitrary torsion-free group, Linnell proposed in \cite{Li06} the notion of strongly Hughes-free division $K[G]$-ring. Following Linnell, we say that an epic division $K[G]$-ring $\phi\colon K[G]\to \D$ is {\bf strongly Hughes-free} if it is $(N,H)$-free for any pair $N\unlhd H$ of subgroups  of $G$.

There is another instance where   $(N, H)$-freeness appears. An equivalent formulation of the strong Atiyah conjecture over $\Q$ for torsion-free groups $G$ says that the division closure $\D(G)$ of $\Q[G]$ in the ring of affiliated operators $\mathcal U(G)$ is a division ring (see, \cite[Proposition 1.2]{LS12}). The ring $\D(G)$ is called a {\bf Linnell}   ring. By  the discussion after \cite[Problem 4.5]{Li06},  the Linnell  ring (if it is a division ring) is $(N,G)$-free for any     subgroup  $N$ of $G$. In particular, by Theorem \ref{hughes}, if $G$ is locally indicable, $\D(G)\cong \D_{\Q[G]}$ as $\Q[G]$-rings. One consequence of this fact is that for locally indicable groups the $L^2$-Betti numbers  can be computed in a purely algebraic way: $$b_k^{(2)}(G)=\dim_{\D_{\Q[G]}} \Tor_k^{\Q[G]} (\D_{\Q[G]}, \Q).$$
This also leads us to the following definition. Let $G$ be a torsion-free group. We say that an epic division $K[G]$-ring $\phi\colon K[G]\to \D$ is {\bf Linnell} if it is $(N,G)$-free for any subgroup $N$  of $G$. In view of the previous discussion it is tempting to propose the following variation of the strong Atiyah conjecture for torsion-free groups.
\begin{Conj}
Let $K$ be a field and $G$ a torsion-free group. Then a Linnell division $K[G]$-ring exists and is unique up to $K[G]$-isomorphism.
\end{Conj}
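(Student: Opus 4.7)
The plan is to split this conjecture into its two logically independent halves and address them separately. For uniqueness, I would attempt to mimic the strategy of Hughes's \cref{hughes}, constructing an isomorphism $\alpha\colon \D_1\to \D_2$ between two Linnell division $K[G]$-rings. The crucial structural input is that $(N,G)$-freeness restricts: each subring $\D_{i,N}\subseteq \D_i$ is automatically a Linnell division $K[N]$-ring, since $(M,N)$-freeness for $M\le N$ follows from $(M,G)$-freeness. This should permit an inductive approach whose base is the locally indicable case already handled by \cref{hughes}.

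My first concrete step would be to reduce to finitely generated subgroups by a direct limit: if one can produce a compatible family of isomorphisms $\alpha_H\colon \D_{1,H}\to \D_{2,H}$ for all finitely generated $H\le G$, with $\alpha_H$ extending $\alpha_{H'}$ whenever $H'\le H$, then the colimit defines the desired $\alpha$ on $\D_{1,G}=\D_1$. The construction of each $\alpha_H$ is where the real work lies. Hughes's argument exploits a normal subgroup $N\trianglelefteq H$ with $H/N\cong \Z$ to write every element of $\D_{1,H}$ as a rational expression in a chosen generator over $\D_{1,N}$ and match coefficients via $(N,H)$-freeness. For an arbitrary torsion-free finitely generated group, no such canonical splitting is available, and this is the central obstacle.

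One substitute I would try is to well-order a finite generating set $\{g_1,\ldots,g_n\}$ of $H$ and at each step invoke $(N_i,H)$-freeness with $N_i=\langle g_1,\ldots,g_{i-1}\rangle$: the inductive hypothesis supplies $\alpha_{N_i}$, and the Linnell property pins down the subring $\D_{i,N_i}\langle g_i\rangle$ rigidly enough to extend $\alpha_{N_i}$ to $\D_{i,N_{i+1}}$. Making this precise requires a canonical normal form for elements of $\D_{i,N_{i+1}}$ over $\D_{i,N_i}$ generalising the Laurent-series expansion available in the cyclic-quotient case, and a genuinely new idea would be needed here.

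For existence, the characteristic zero case with $G$ locally indicable follows from the existence of $\D_{K[G]}$ established in \cite{JL20}: one needs only verify that Hughes-freeness implies Linnell-freeness in this setting, which should follow by climbing a subnormal tower witnessing local indicability of $N$ and iterating the Hughes-free property. For general torsion-free $G$ in characteristic zero, I would attempt to take the division closure of $K[G]$ inside $\D(\Q[G])\otimes_{\Q}K$ for $K\supseteq \Q$, but its being a division ring is equivalent to the strong Atiyah conjecture and so is wide open. In positive characteristic the absence of any analytic completion analogous to $\mathcal{U}(G)$ means existence is intertwined with Kaplansky's zero-divisor conjecture, and this is, in my view, the deepest obstruction of all.
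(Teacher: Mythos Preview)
The statement you are addressing is labelled as a \emph{Conjecture} in the paper, not a theorem; the paper offers no proof and presents it explicitly as an open problem, a variant of the strong Atiyah conjecture for torsion-free groups. There is therefore no paper proof to compare your proposal against.

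Your proposal is, correspondingly, not a proof but a survey of possible attack routes, and you are candid about this: you write that for the uniqueness step ``a genuinely new idea would be needed,'' that existence for general torsion-free $G$ in characteristic zero is ``wide open,'' and that positive characteristic is entangled with Kaplansky's zero-divisor conjecture. These assessments are accurate. The one case that \emph{is} settled---locally indicable $G$ with $\D_{K[G]}$ known to exist---is recorded in the paper as \cref{grater} (Gr\"ater's theorem), and the argument there passes through Conradian orders and the Dubrovin construction rather than the tower argument you sketch; your suggestion that Hughes-freeness promotes to Linnell-freeness by ``climbing a subnormal tower witnessing local indicability'' is optimistic, and I would not expect that naive induction to succeed without something of comparable strength to Gr\"ater's machinery.

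In short: you have correctly identified the landscape of obstructions, but you have not supplied a proof, and neither has the paper---because none is currently known.
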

The strong Atiyah conjecture indicates that in the case where $K$ is a subfield of $\CC$, a candidate for a Linnell division $K[G]$-ring is the division closure of $K[G]$ in $\mathcal U(G)$. If $K$ is an arbitrary field and $G$ has a right-invariant order, Dubrovin proposed such a candidate \cite{Du87} (see also \cite{Be18, GS15}). 

 Let $G$ be a group with a right-invariant order $\preceq$.  {\bf The  space of  Malcev-Neumann series} $\mathcal{MN}_{\preceq}(K[G])$  is the  abelian group consisting of formal infinite sums $m=\sum_{g \in G} k_gg$, with $k_g \in K$, such that the support of $m$, $$\supp \left (\sum_{g \in G} k_g g  \right )=\{g \in G:\ k_g \ne 0\}$$ is a well-ordered subset of $G$. 
 We denote by $\mathcal E_\preceq (K[G])$ the ring of endomorphisms of the abelian group $\mathcal{MN}_\preceq (K[G])$ and we will use the notation where the elements of  $\mathcal E_\preceq (K[G])$ act on $\mathcal{MN}_\preceq (K[G])$ on  the right.
 
  Since the order $\preceq$ is right-invariant, the ring $K[G]$ is embedded into $\mathcal E_\preceq (K[G])$ and we will identify the elements of  $K[G]$  with their images. The {\bf Dubrovin} ring $\D_{\preceq}(K[G])$ is the division closure of $K[G]$ in $\mathcal E_{\preceq} (K[G])$. Dubrovin conjectured that this ring is a division ring. The following result has appeared in the literature  in  slightly different contexts and we include its proof for the readers' convenience.
\begin{pro} \label{dubrovinlinnell}
Let $K$ be a field and   $G$ a group with a right-invariant order $\preceq$. 
 If the Dubrovin ring  $\D_{\preceq}(K[G])$ is  a division ring, then it is also a Linnell division $K[G]$-ring.
 \end{pro}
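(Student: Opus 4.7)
My plan is to introduce the ``evaluation at $1$'' map
\[
\psi\colon \D_{\preceq}(K[G]) \longrightarrow \mathcal{MN}_{\preceq}(K[G]),\qquad d \mapsto 1\cdot d,
\]
which, since elements of $K[G]$ act on $\mathcal{MN}_{\preceq}(K[G])$ by right multiplication, is automatically a right $K[G]$-module map:\ $\psi(dg)=(1\cdot d)\cdot g=\psi(d)\cdot g$. I will (i) prove $\psi$ is injective, (ii) show that its restriction to $\D_N$ lands in the sub-abelian group $\mathcal M^N\subseteq \mathcal{MN}_{\preceq}(K[G])$ of series supported in $N$, and then (iii) read off $(N,G)$-freeness from a disjointness-of-supports argument. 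Epicness is automatic, since $\D_{\preceq}(K[G])$ is by definition the division closure of (the image of) $K[G]$.

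For injectivity the hypothesis that $\D_{\preceq}(K[G])$ is a division ring is used directly:\ if a nonzero $d$ satisfied $1\cdot d=0$, then its inverse in $\D_{\preceq}(K[G])\subseteq \mathcal E_{\preceq}(K[G])$ would provide a two-sided inverse to $d$ as an endomorphism, forcing $d$ to act as a bijection of $\mathcal{MN}_{\preceq}(K[G])$ and contradicting $1\cdot d=0$.

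The main step, which I expect to be the principal obstacle, is (ii). My strategy is to use the abelian-group decomposition $\mathcal{MN}_{\preceq}(K[G]) = \mathcal M^N \oplus \mathcal M^{G\setminus N}$, which is well defined because any subset of a well-ordered set is well-ordered, and to work inside the subring $\mathcal E''\subseteq \mathcal E_{\preceq}(K[G])$ of ``block-diagonal'' endomorphisms, i.e.\ those preserving both summands. A short coset check using $Nn = N$ and $(G\setminus N)n = G\setminus N$ for $n\in N$ shows $K[N]\subseteq \mathcal E''$, and stability of $\mathcal E''$ under sums and composition is immediate. The decisive observation is that a nonzero element $d\in \mathcal E''\cap \D_{\preceq}(K[G])$ is invertible in $\mathcal E_{\preceq}(K[G])$ and, being block-diagonal, must act bijectively on each summand; hence its inverse is the direct sum of the two block inverses and still lies in $\mathcal E''$. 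Thus $\mathcal E''\cap \D_{\preceq}(K[G])$ is a division subring of $\D_{\preceq}(K[G])$ containing $K[N]$, so it contains $\D_N$; applying any $d\in \D_N$ to $1\in \mathcal M^N$ then yields $\psi(d)\in \mathcal M^N$.

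Finally, step (iii) is immediate. For representatives $q_1,\dots,q_n$ of pairwise distinct right $N$-cosets and $d_1,\dots,d_n\in \D_N$ not all zero, right $K[G]$-linearity of $\psi$ gives
\[
\psi\Big(\sum_{i=1}^n d_i q_i\Big) = \sum_{i=1}^n \psi(d_i)\cdot q_i,
\]
and each $\psi(d_i)\cdot q_i$ is supported in the coset $Nq_i$. Since the cosets $Nq_i$ are pairwise disjoint and at least one $\psi(d_i)\ne 0$ by the injectivity of $\psi$, the right-hand side is a nonzero element of $\mathcal{MN}_{\preceq}(K[G])$, and therefore $\sum_i d_i q_i \ne 0$, proving $(N,G)$-freeness.
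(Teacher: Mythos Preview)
Your proof is correct and follows essentially the same approach as the paper. The paper phrases step (ii) as ``$\D_N$ commutes with the projection $\pi_N$'' (the centralizer of $\pi_N$ being a division subring of $\mathcal E_{\preceq}(K[G])$ containing $K[N]$), which is exactly your block-diagonal condition $\mathcal E''$; the remaining steps---evaluating at $1$, using invertibility of nonzero elements for injectivity, and the disjoint-cosets support argument---are identical.
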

  \begin{proof} Let us denote  $\D_{\preceq}(K[G])$ by $\D$. Let $N $ be a subgroup of $G$. We want to show that if $q_1,\ldots, q_n\in G$ are   in  different right $N$-cosets, then for any non-zero elements  $d_1,\ldots, d_n\in \D_N$, the sum  $\sum_{i=1}^n d_i q_i\in \D$ is not equal to zero. Assume that $\sum_{i=1}^n d_i q_i\in \D=0$. Without loss of generality we can also assume that $q_1=1$.
  
  Given    a subset $T$ of $G$, we denote by $\pi_T$ the element of $\mathcal E_\preceq (K[G])$ defined by means of
$$\left(\sum_{g\in G} k_g g \right)\cdot \pi_T=\sum_{g\in T} k_gg\ (k_g\in K).$$
Observe that elements of $K[N]$ commute with $\pi_N$. Hence, the elements of $\D_N$ commute with $\pi_N$ too. Thus, for each $i=1,\ldots, n$,
$$\supp(1\cdot d_iq_i)=\supp (1\cdot \pi_N d_iq_i)=\supp (1\cdot d_i\pi_Nq_i)\le Nq_i.$$ Therefore,
$$1\cdot d_1=\left(1\cdot \sum_{i=1}^n d_iq_i \right)\pi_N=0.$$
Since $d_1$ is invertible in $\mathcal E_{\preceq}(K[G])$, we obtain a contradiction.
 \end{proof}

As we have mentioned above, if $G$ is locally indicable and $K$ is of characteristic zero, then the Hughes-free division ring $\D_{K[G]}$ is Linnell. Gr\"ater proved in \cite{Gr20} the same result for an arbitrary field $K$ if $\D_{K[G]}$ exists. He used the following characterization of locally indicable groups: a group is locally indicable if and only if it admits a Conradian order. We say that an order $\preceq $ on $G$ is {\bf Conradian} if for all  elements $f,g\succeq 1$ of $G$, there exists a natural number $n$ such that $g^nf\succ g$. 
\begin{teo}[Gr\"ater]\label{grater}
Let $K$ be a field and $G$ a locally indicable group. Let $\preceq$ be a Conradian order on $G$. If $\D_{K[G]}$ exists, then the Dubrovin ring $\D_{\preceq}(K[G])$ is a division ring. In particular, $\D_{K[G]}$ is the unique Linnell division $K[G]$-ring.
\end{teo}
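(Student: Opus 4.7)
The plan is to prove both assertions in one stroke by constructing a $K[G]$-ring isomorphism
\[
\alpha \colon \D_{K[G]} \longrightarrow \D_\preceq(K[G]).
\]
Once this is available, the fact that $\D_{K[G]}$ is a division ring is transported to $\D_\preceq(K[G])$, and then Proposition \ref{dubrovinlinnell} tells us that $\D_{K[G]} \cong \D_\preceq(K[G])$ is a Linnell division $K[G]$-ring. Uniqueness of the Linnell division $K[G]$-ring is then automatic: for a locally indicable group, any Linnell ring is $(N,G)$-free for every subgroup $N$, so in particular it is $(N,H)$-free whenever $N\unlhd H\le G$ with $H/N\cong \Z$; hence it is Hughes-free and agrees with $\D_{K[G]}$ by Theorem \ref{hughes}.

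To construct $\alpha$, the natural thing to do is to extend the given embedding $K[G]\hookrightarrow \mathcal E_\preceq(K[G])$ to all of $\D_{K[G]}$ by defining how each $x\in \D_{K[G]}$ should act on $\mathcal{MN}_\preceq(K[G])$. The Conradian order $\preceq$ supplies, on each finitely generated subgroup $H\le G$, a finite chain of convex subgroups whose successive quotients are either trivial or infinite cyclic. Hughes-freeness then lets us expand any $x\in \D_H$ recursively along this chain as a canonical series $\sum_{g\in H} d_g\, g$ in which only finitely many $d_g$ appear at each step and the support is well-ordered with respect to $\preceq$. Given $m\in \mathcal{MN}_\preceq(K[G])$, I would define $m\cdot \alpha(x)$ by formally distributing and collecting terms from the product of the series expressions of $m$ and $x$.

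The main obstacle is verifying that this formal product genuinely lies in $\mathcal{MN}_\preceq(K[G])$, i.e.\ that each coefficient is a finite sum and the overall support is well-ordered. This is precisely where the Conradian condition $g^n f \succ g$ for positive $f,g$ enters: it controls how the positive tail of the expansion of $x$ interacts with the well-ordered support of $m$, forcing each given element of $G$ to receive contributions from only finitely many terms, in the same spirit as the classical Malcev--Neumann argument for bi-ordered groups. Once this is checked, $\alpha$ is a well-defined $K[G]$-ring homomorphism; it is injective because $\D_{K[G]}$ is simple, and its image is a division subring of $\mathcal E_\preceq(K[G])$ containing $K[G]$, so it contains $\D_\preceq(K[G])$. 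The reverse inclusion follows because $\D_{K[G]}$ is generated as a ring by $K[G]$ together with inverses of its nonzero elements, which must map into $\D_\preceq(K[G])$. Hence $\alpha$ identifies $\D_{K[G]}$ with $\D_\preceq(K[G])$, completing the proof.
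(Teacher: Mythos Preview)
The paper's proof is a black-box citation: it invokes \cite[Corollary 8.3, Theorem 8.1]{Gr20} to obtain directly that $\D_{\preceq}(K[G])$ is a division ring isomorphic (as a $K[G]$-ring) to $\D_{K[G]}$, then applies Proposition~\ref{dubrovinlinnell} and Theorem~\ref{hughes}. Your deductions after the isomorphism---Proposition~\ref{dubrovinlinnell} for the Linnell property, and the observation that Linnell $\Rightarrow$ Hughes-free together with Theorem~\ref{hughes} for uniqueness---are correct and match the paper's.

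Where you diverge is in attempting to reconstruct Gr\"ater's embedding $\D_{K[G]}\hookrightarrow \mathcal E_\preceq(K[G])$ from scratch. Your sketch has the right overall shape, but there is a genuine error and a genuine gap. The error: it is \emph{not} true that a Conradian order yields, on each finitely generated subgroup, a finite chain of convex subgroups with infinite cyclic successive quotients. Convex jumps in a Conradian order are merely Archimedean (so order-embed in $\R$); for instance $\Z^2$ with an irrational-slope order is Conradian with a single jump isomorphic to $\Z^2$, and a nonabelian free group is locally indicable but not poly-$\Z$. What one actually uses is that each element $x\in\D_{K[G]}$ has finite \emph{complexity} (it is built from finitely many elements of $K[G]$ by finitely many ring operations and inversions), and one recurses on that, not on a fixed convex series.

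The gap is the one you yourself flag: verifying that the formal product lands in $\mathcal{MN}_\preceq(K[G])$ and that the resulting map is a ring homomorphism. This is the entire substance of Gr\"ater's theorem; the Conradian inequality $g^nf\succ g$ is indeed the key input, but turning it into a proof requires a careful induction on complexity together with a nontrivial analysis of supports, which your paragraph does not supply. As written, your proposal is a plausible outline of \cite{Gr20} rather than a proof; the paper sidesteps all of this by citing the source.
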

\begin{proof}
By \cite[Corollary 8.3, Theorem 8.1]{Gr20}, $\D_{\preceq}(K[G])$ is a division ring and is isomorphic to $\D_{K[G]}$. By Proposition \ref{dubrovinlinnell}, $\D_{\preceq}(K[G])$  is Linnell.  Theorem \ref{hughes} implies that  $\D_{K[G]}$ is the unique Linnell division $K[G]$-ring.
\end{proof}

\subsection{Non-positive and not too positive immersions}

A two-complex is a two-dimensional CW-complex. Maps between two-complexes will always be assumed to be combinatorial. That is, $n$-cells map homeomorphically to $n$-cells. We will also always assume that attaching maps of two-cells are given by immersions. In particular, we are not allowing two-cells with boundary a point. If $\Gamma$ is a graph and $\lambda\colon\mathbb{S}\immerses \Gamma$ is an immersion of a disjoint union of circles, we denote by $X = (\Gamma, \lambda)$ the two-complex obtained by attaching a two-cell to $\Gamma$ along the image of each component of $\mathbb{S}$.

A two-complex $X$ has {\bf non-positive immersions} if for every immersion $Y\immerses X$ where $Y$ is a compact connected two-complex, we either have $\chi(Y)\leqslant 0$, or $\pi_1(Y) = 1$. A useful consequence of having non-positive immersions is the following, due to Wise \cite{wise_20}.

\begin{teo}
\label{npi_indicable}
If $X$ has non-positive immersions, then $\pi_1(X)$ is locally indicable.
\end{teo}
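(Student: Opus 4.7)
Local indicability means every non-trivial finitely generated subgroup of $\pi_1(X)$ admits a surjection to $\mathbb{Z}$. Let $H\le \pi_1(X)$ be non-trivial and finitely generated, and suppose for contradiction that $H^{\mathrm{ab}}$ is finite. My aim would be to contradict non-positive immersions by producing an immersion $Y\immerses X'$, where $X'\to X$ is a cover (so $X'$ also has non-positive immersions) and $Y$ is a compact connected two-complex with $\pi_1(Y)\ne 1$ and $\chi(Y)>0$.

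First I would build a combinatorial map $Y_0\to X$ with $\chi(Y_0)>0$ realising $H$ on $\pi_1$. Choose generators $a_1,\ldots,a_n$ of $H$, write $H=F_n/N$, and pick $k_1,\ldots,k_n\in N$ whose images in $F_n^{\mathrm{ab}}\cong \mathbb{Z}^n$ span a finite-index subgroup; this is possible exactly because $H^{\mathrm{ab}}$ is finite. Let $Y_0$ have one vertex, loops $a_1,\ldots,a_n$, and two-cells attached along $k_1,\ldots,k_n$. Then $\chi(Y_0)=1-n+n=1$ and $\pi_1(Y_0)$ surjects onto $H\ne 1$. Since each $k_i$ is trivial in $\pi_1(X)$, the edges of $Y_0$ extend to a combinatorial map to $X$ and the two-cells can be filled in.

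Next I would apply the tower construction for combinatorial maps of two-complexes (after Howie and Papakyriakopoulos): iteratively pass to subcomplex images and to covers of the target, eventually producing an immersion $Y\immerses X'$ where $X'\to X$ is a composition of coverings and $Y$ is obtained from $Y_0$ by folds. Since covers of two-complexes with non-positive immersions inherit non-positive immersions, and since the image of $\pi_1(Y)$ in $\pi_1(X')$ is a conjugate of the non-trivial group $H$, non-positive immersions on $X'$ forces $\chi(Y)\le 0$.

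The main obstacle, and the heart of the argument, is to arrange the tower so that $\chi(Y)=\chi(Y_0)=1$. One-cell folds (identifying two edges with the same image) preserve Euler characteristic, but two-cell identifications (where two of the $k_i$ acquire coincident attaching words in the target after the one-skeleton folds) decrease it by one each. The goal would be to choose the $k_i$ carefully, possibly modifying them by elements of $[F_n,F_n]$ without altering their abelianisations, so that the attaching words of the surviving two-cells stay pairwise distinct in $X'$. Achieving $\chi(Y)=1$ would then contradict $\chi(Y)\le 0$, yielding the desired surjection $H\twoheadrightarrow \mathbb{Z}$.
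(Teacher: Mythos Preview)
The paper does not prove this theorem; it is quoted without proof as a result of Wise \cite{wise_20}, so there is no argument in the paper to compare yours against.

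On its own merits, your outline has a genuine gap at exactly the place you yourself flag as ``the heart of the argument.'' You correctly observe that two-cell identifications in the tower/folding process drop $\chi$ by one each, and you propose to avoid them by perturbing the $k_i$ within their $[F_n,F_n]$-cosets. But the two-cells that can collide after folding are not the abstract $k_i$-cells of your $Y_0$: to make $Y_0\to X$ combinatorial you must first replace each such cell by a van Kampen diagram $D_i$ over $X$, and it is the many small two-cells of the $D_i$ (each mapping to an actual two-cell of $X$) that may coincide after edge-folding. Those diagrams are determined by the chosen null-homotopies of the images of the $k_i$ in $X$, not by the algebraic shape of the $k_i$ in $F_n$, so there is no evident mechanism by which perturbing the $k_i$ prevents these collisions. (A minor side point: one-cell folds do not always preserve $\chi$---folding two parallel edges increases $\chi$ by one---but this error only helps you and is not where the argument breaks.) As written, the proposal correctly identifies the obstacle but does not overcome it; what you have is a plan, not a proof.
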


A variation of non-positive immersions is known as {\bf weak non-positive immersions}. We say $X$ has weak non-positive immersions if for every immersion $Y\immerses X$ with $Y$ compact and connected, we have $\chi(Y)\leqslant 1$.

\begin{pro}
\label{wnpi}
If $X$ is a two-complex with non-positive immersions and with $\pi_1(X) \neq 1$, then $X$ has weak non-positive immersions.
\end{pro}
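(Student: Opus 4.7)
The plan is to proceed by contradiction. Suppose there exists a compact connected immersion $Y \immerses X$ with $\chi(Y) \geq 2$. Then non-positive immersions of $X$ forces $\pi_1(Y) = 1$, because $\chi(Y) > 0$. Using $\pi_1(X) \neq 1$, I would build from $Y$ a new compact connected immersion $Z \immerses X$ with $\chi(Z) \geq 1$ and $\pi_1(Z) \neq 1$, which contradicts non-positive immersions of $X$.

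For the construction, pick a vertex $v \in Y$ and represent a non-trivial element of $\pi_1(X, f(v))$ by a reduced edge-loop $\gamma$ in $X$ based at $f(v)$; such a $\gamma$ exists since any non-trivial element of the fundamental group of a two-complex can be realised as a reduced (and hence immersed) edge-loop in the $1$-skeleton. Wedge a combinatorial circle $C$ modelled on $\gamma$ onto $Y$ at $v$ to form $Y' = Y \vee_v C$, and extend the combinatorial map $Y \to X$ by sending $C$ around $\gamma$. Then apply Stallings folding to the resulting combinatorial map $Y' \to X$ to obtain an immersion $Z \immerses X$.

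Two facts about the fold are the heart of the argument. First, no $2$-cells are identified during the folding: the link condition for the immersion $Y \to X$ prevents distinct $2$-cells of $Y$ from sharing an attaching edge-word in $Y^{(1)}$ while mapping to a common $2$-cell of $X$, and the folding only ever identifies edges of $C$ with edges of $Y$ (never two edges of $Y$ with each other, because $Y^{(1)} \to X^{(1)}$ is already an immersion of graphs), leaving the attaching maps of $Y$'s $2$-cells untouched. Second, the circle $C$ cannot be absorbed entirely into $Y$: if it were, $\gamma$ would be represented by an edge-loop in $Y$, which is null-homotopic in $Y$ (since $\pi_1(Y) = 1$) and therefore in $X$, contradicting $[\gamma] \neq 1$.

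It follows that $Z = Y \cup \sigma$, where $\sigma$ is a non-empty arc attached to $Y$ at its two (possibly equal) endpoints. In both sub-cases a direct cell-count yields $\chi(Z) = \chi(Y) - 1 \geq 1$, while van Kampen's theorem gives $\pi_1(Z) \cong \Z$, generated by a loop whose image in $\pi_1(X)$ is equivalent to $[\gamma] \neq 1$ up to null-homotopic loops in $Y$, and is thus non-trivial. This contradicts the non-positive immersions of $X$ applied to $Z$ and completes the proof. The main technical obstacle is the careful verification that folding preserves enough of the Euler-characteristic budget, and this rests on ruling out $2$-cell identifications and the complete absorption of $C$, both of which follow from the immersion hypothesis on $Y \to X$ together with the non-triviality of $[\gamma]$ in $\pi_1(X)$.
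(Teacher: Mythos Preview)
Your overall strategy is sound and actually cleaner than the paper's proof, but one step is not quite right. You assert that ``the folding only ever identifies edges of $C$ with edges of $Y$'', and then conclude that $Z=Y\cup\sigma$ with $\sigma$ an arc. This fails: $C$--$C$ folds can occur. For instance, take $X^{(1)}$ a rose on $a,b$, let $Y$ have $1$--skeleton a single vertex $p$ with a loop labelled $a$, and let $\gamma=bab^{-1}a^{-1}$. After absorbing the last letter into the $a$--loop, the two end half-edges of the remaining arc at $p$ are both $b^+$ and fold together; the resulting extra piece is a lollipop (an edge from $p$ to a new vertex carrying a loop), not an arc.

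Fortunately this does not damage the argument. The facts you need survive in general:
\begin{itemize}
\item $Y^{(1)}$ embeds in $Z^{(1)}$. Inductively, since $Y^{(1)}\to X^{(1)}$ is already an immersion, no fold ever involves two $Y$--edges; every fold either identifies a $C$--edge with a $Y$--edge (merging a $C$--vertex into $Y$) or two $C$--edges (merging two $C$--vertices). Neither identifies two $Y$--vertices. Hence the $2$--cells of $Y$ keep their attaching maps and $Z\to X$ is an immersion.
\item $\chi(Z)\geq\chi(Y)-1\geq 1$, simply because $\chi(Y\vee C)=\chi(Y)-1$ and each Stallings fold does not decrease Euler characteristic.
\item $\pi_1(Z)\neq 1$, because the image of $C$ in $Z$ is a loop mapping to $\gamma$ in $X$, so it represents a nontrivial class. (In fact one can check the extra piece is always an arc or a lollipop attached at a single $Y$--vertex, so $\pi_1(Z)\cong\Z$ as you claimed; but you do not need this.)
\end{itemize}
With these three facts the contradiction with non-positive immersions goes through.

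By contrast, the paper's proof proceeds by a case analysis on the image $\overline{X}$ of $Y$ in $X$: whether $X^{(1)}$ deformation retracts to $\overline{X}^{(1)}$, whether $Y^{(1)}\to\overline{X}^{(1)}$ is a covering, and so on, building the auxiliary immersion $Y''$ by hand in each case. Your folding approach avoids this case split entirely and is more uniform; the paper's approach has the advantage of being completely explicit about the shape of the new immersion.
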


\begin{proof}
Suppose for a contradiction that $X$ does not have weak non-positive immersions and so there is an immersion $Y\immerses X$ with $Y$ finite, connected, with $\pi_1(Y) = 1$ and $\chi(Y)\geqslant 2$. Since $Y$ is connected, we may assume that $X$ is connected. Since every connected one-dimensional cell complex has Euler characteristic at most one, we may assume that $X$ and $Y$ have at least one two-cell. If some one-cell $e\subset Y$ is not traversed by an attaching map of a 2-cell, then each component of $Y - e$ also satisfies these assumptions. Hence, we may assume that every one-cell in $Y$ is traversed by at least one attaching map of a two-cell. Denote by $\overline  {X}$ the image of $Y$ in $X$.

Suppose first that $X^{(1)}$ deformation retracts to $\overline  {X}^{(1)}$ and that the map $Y^{(1)}\to \overline  {X}^{(1)}$ is a cover. Note that since attaching maps of two-cells are immersions, the fact that $X^{(1)}$ deformation retracts to $\overline  {X}^{(1)}$ implies that all two-cells are supported in $\overline{X}^{(1)}$ and so $\pi_1(\overline{X})\to \pi_1(X)$ is a surjection. Since $Y$ was finite, $Y^{(1)}\to \overline{X}^{(1)}$ is a finite cover and so $\pi_1(Y)$ maps onto a finite index subgroup of $\pi_1(X)$. Since $\pi_1(Y) = 1$, it follows that $\pi_1(X)$ must be finite and so $\pi_1(X) = 1$ as $\pi_1(X)$ is locally indicable by \cref{npi_indicable}.

Now suppose that $X^{(1)}$ deformation retracts to $\overline  {X}^{(1)}$, but that the map $Y^{(1)}\to \overline  {X}^{(1)}$ is not a cover. Then there is some point $v\in Y^{(0)}$ and a one-cell $\overline  {e}$ in $\overline  {X}$, adjacent to the image $\overline  {v}$ of $v$, such that there is no one-cell adjacent to $v$ that maps to $\overline  {e}$. Hence, we may attach a one-cell $e$ along an endpoint to $Y$, obtaining $Y' = Y\cup_v e$, and extend our immersion $Y\immerses \overline  {X}$ to $Y'\immerses\overline  {X}$ by mapping $e$ to $\overline  {e}$. Now consider the graph $\Gamma = \overline  {X}^{(1)} - \overline  {e}$. If $\Gamma$ is connected and simply connected, then $X^{(1)}$ deformation retracts onto a copy of $S^1$. Since attaching maps of two-cells are immersions, it follows that all attaching maps of two-cells in $X$ must be supported in this copy of $S^1$. Thus, since $X$ has at least one two-cell, we have that $\pi_1(X)$ is a proper quotient of $\pi_1(X^{(1)}) = \Z$ and so $\pi_1(X) = 1$ as $\pi_1(X)$ is locally indicable. Now suppose that $\Gamma$ is connected, but not simply connected. Then there is a connected subgraph $\overline{S}\subset \overline{X}^{(1)}$ containing the other endpoint of $\overline{e}$, call it $\overline{u}$ (which may be equal to $\overline{v}$), and with $\chi(\overline{S})= 0$. Now define $Y''$ to be the two-complex obtained from $Y'$ by attaching a copy $S$ of $\overline{S}$ to the other endpoint of $e$ along $u\in S$, the corresponding copy of $\overline{u}$. The resulting map $Y''\immerses \overline  {X}$ is an immersion and it is not hard to see that $\chi(Y'')\geqslant 1$ and $\pi_1(Y'')\cong \Z$, contradicting the hypothesis that $X$ has non-positive immersions. Finally, suppose that $\Gamma$ is not connected and denote by $\Gamma'$ the component not containing $v$. Since there is a two-cell in $\Gamma'$ whose attaching map traverses $\overline  {e}$, it follows that $\Gamma'$ is not simply connected. Here we are using the fact that all attaching maps of two-cells are immersions. From here we may apply the same trick as above to obtain a contradiction. 

If $X^{(1)}$ does not deformation retract to $\overline{X}^{(1)}$, then this means that there is some non-trivial immersed path in $X^{(1)}$, intersecting $\overline{X}^{(1)}$ only at its endpoints, say $v$ and $w$ (these could be equal). We may assume that the image of this path is a connected subgraph $\Gamma$ with $\Gamma \cap \overline{X} = \{v, w\}$. If $v \neq w$, then we may assume that this path is embedded and so $\chi(\Gamma) = 1$. If $v = w$, then we may assume that $\Gamma$ is an embedded path leading out from $v$ and connected to an embedded cycle so that $\chi(\Gamma) = 0$. Now since $v, w \in \overline{X}$, there are vertices $v', w' \in Y$ (equal if $v, w$ are equal) that map to $v$ and $w$ respectively. Let $Y'$ be obtained from $Y$ by gluing a copy of $\Gamma$ along $v'$ and $w'$. If $v' = w'$, we have $\chi(\Gamma)= 0$ and so $\chi(Y') = \chi(Y) + \chi(\Gamma) - 1=\chi(Y)-1$. If $v'\neq w'$, we have $\chi(\Gamma)= 1$ and so $\chi(Y') = \chi(Y) + \chi(\Gamma) - 2=\chi(Y) - 1$. The immersion $Y \immerses X$ now extends in the obvious way to an immersion $Y' \immerses X$, but now $\chi(Y') = \chi(Y) - 1\geqslant 1$ and $\pi_1(Y') = \Z$. Hence $X$ could not have had non-positive immersions.
\end{proof}

Combining \cref{wnpi} with \cite[Theorem 1.6]{wise_20}, we have the following.

\begin{cor}
\label{npi_aspherical}
Let $X$ be a two-complex with non-positive immersions. Then either $X$ is aspherical or $\pi_1(X) = 1$.
\end{cor}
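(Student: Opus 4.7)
The plan is a direct dichotomy argument that assembles \cref{wnpi} with an external input from Wise. Since the corollary is stated as following from \cref{wnpi} combined with \cite[Theorem 1.6]{wise_20}, I would do nothing more than bundle these two facts together.

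First, I would split on whether $\pi_1(X)$ is trivial. If $\pi_1(X)=1$, there is nothing to prove and we are in the second alternative of the conclusion. Otherwise, I would invoke \cref{wnpi}: the hypothesis that $X$ has non-positive immersions together with $\pi_1(X)\neq 1$ gives that $X$ has weak non-positive immersions, i.e.\ every immersion $Y\immerses X$ from a compact connected two-complex satisfies $\chi(Y)\leqslant 1$.

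Next, I would feed this into \cite[Theorem 1.6]{wise_20}, which asserts that a two-complex with weak non-positive immersions is aspherical. Applying that directly to $X$ yields the first alternative of the conclusion, $X$ aspherical, and closes the case $\pi_1(X)\neq 1$.

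There is no real obstacle here; the substantive work has already been done in \cref{wnpi} (upgrading non-positive immersions to weak non-positive immersions under the mild hypothesis $\pi_1(X)\neq 1$) and in Wise's asphericity theorem (passing from the Euler characteristic inequality on all finite immersed subcomplexes to asphericity of the whole complex). My only task in the proof is to verify that the hypotheses of these two cited statements are met in the correct order, and to record the trivial case separately.
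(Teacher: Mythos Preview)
Your proposal is correct and follows exactly the route the paper takes: the corollary is stated as an immediate consequence of \cref{wnpi} together with \cite[Theorem 1.6]{wise_20}, and you have assembled these two inputs in the right order with the trivial case handled separately.
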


If $X$ has non-positive immersions, it turns out that we may derive explicit bounds on the $L^2$-Betti numbers of finitely generated subgroups of $\pi_1(X)$. The proof of this result below closely follows the proof of a similar result due to Louder--Wilton \cite[Corollary 1.6]{LW17}. We include a proof for completeness.

\begin{pro}
\label{l2betti}
Let $X$ be a two-complex with non-positive immersions. If $H$ is a finitely generated subgroup of $\pi_1(X)$, then 
\[
b_2^{(2)}(H) \leqslant b_1^{(2)}(H).
\]
\end{pro}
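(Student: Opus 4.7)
The plan is to exhaust the covering space $X_H \to X$ by finite subcomplexes that immerse into $X$, bound their Euler characteristics via the non-positive immersions hypothesis, and pass to the limit using continuity of L\"uck's dimension function.

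First I would reduce to the nontrivial case. By \cref{npi_indicable} $\pi_1(X)$ is locally indicable and hence torsion-free, so $H$ is either trivial, in which case the conclusion is immediate, or infinite; assume the latter. Then $\pi_1(X) \neq 1$, so by \cref{npi_aspherical} the complex $X$ is aspherical, whence $X_H$ is a $K(H,1)$ and $\widetilde{X}$ is contractible. Since $H$ is finitely generated, one can choose an exhaustion $Y_0 \subseteq Y_1 \subseteq \ldots$ of $X_H$ by finite connected subcomplexes such that $\pi_1(Y_i) \twoheadrightarrow H$ for every $i$ (start with a subcomplex containing loops representing a generating set of $H$, then enlarge).

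Each composition $Y_i \hookrightarrow X_H \to X$ is an immersion, and $\pi_1(Y_i) \neq 1$ since it surjects onto $H$, so the non-positive immersions hypothesis gives $\chi(Y_i) \leq 0$. Let $\widetilde{Y_i} \subseteq \widetilde{X}$ denote the preimage of $Y_i$; it is connected (because $\pi_1(Y_i) \twoheadrightarrow H$) and carries a free cocompact $H$-action with quotient $Y_i$. The cellular chain complex $C_*(\widetilde{Y_i})$ is thus a bounded complex of finitely generated free $\mathbb{Z} H$-modules, and tensoring with the group von Neumann algebra $\mathcal{N}(H)$ gives
\[
\chi(Y_i) \;=\; b_0^{(2)}(Y_i;H) - b_1^{(2)}(Y_i;H) + b_2^{(2)}(Y_i;H),
\]
where $b_j^{(2)}(Y_i;H)$ denotes the von Neumann dimension of $H_j\bigl(\mathcal{N}(H) \otimes_{\mathbb{Z} H} C_*(\widetilde{Y_i})\bigr)$. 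Since $\widetilde{Y_i}$ is connected and $H$ is infinite, $b_0^{(2)}(Y_i;H) = 0$, so $b_2^{(2)}(Y_i;H) \leq b_1^{(2)}(Y_i;H)$.

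To conclude I would pass to the limit. Because $X_H$ is a $K(H,1)$, the complex $C_*(\widetilde{X}) = \varinjlim_i C_*(\widetilde{Y_i})$ is a free $\mathbb{Z} H$-resolution of $\mathbb{Z}$, and each inclusion $C_j(\widetilde{Y_i}) \hookrightarrow C_j(\widetilde{X})$ is as a free $\mathbb{Z} H$-direct summand. These inclusions are therefore preserved upon tensoring with $\mathcal{N}(H)$, and hence the submodules $\ker \partial_k^{(i)}$ and $\im \partial_k^{(i)}$ form directed unions that exhaust the corresponding submodules of $\mathcal{N}(H) \otimes_{\mathbb{Z} H} C_*(\widetilde{X})$. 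By continuity of L\"uck's dimension function under directed unions of submodules, $b_j^{(2)}(H) = \lim_i b_j^{(2)}(Y_i;H)$ for $j = 1, 2$, and the desired inequality passes to the limit. The main obstacle is this last limiting step, where one must set up the correct continuity property of the extended L\"uck dimension function carefully; the geometric content, namely the existence of the immersed exhaustion with controlled Euler characteristic, is otherwise immediate from the definitions.
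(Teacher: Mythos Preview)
Your overall strategy---exhausting by finite subcomplexes immersing into $X$, bounding their Euler characteristics, and passing to a limit---is exactly the paper's approach. The gap is in the final step. From $\ker\partial_1=\bigcup_i\ker\partial_1^{(i)}$ and $\im\partial_2=\bigcup_i\im\partial_2^{(i)}$, cofinality of L\"uck's dimension gives $\dim\ker\partial_1=\lim_i\dim\ker\partial_1^{(i)}$ and $\dim\im\partial_2=\lim_i\dim\im\partial_2^{(i)}$, but it does \emph{not} follow that $b_1^{(2)}(H)=\dim(\ker\partial_1/\im\partial_2)$ equals $\lim_i\dim(\ker\partial_1^{(i)}/\im\partial_2^{(i)})$: when both $\dim\ker\partial_1$ and $\dim\im\partial_2$ are infinite (the generic situation once $X_H$ is infinite) you are subtracting two sequences tending to $\infty$, and their difference need not converge to the right thing. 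Concretely, for $X$ the torus, $H=\langle a\rangle\cong\Z$, and $Y_i\subset X_H$ the sub-cylinder on vertices $v_{-i},\dots,v_i$ with the top $2$-cell omitted, one gets $\pi_1(Y_i)\cong F_2$ with both generators mapping to $a$, so $b_1^{(2)}(Y_i;H)=1$ for all $i$ while $b_1^{(2)}(H)=0$. What does follow from homology commuting with directed colimits is only $b_1^{(2)}(H)\leqslant\limsup_i b_1^{(2)}(Y_i;H)$, which is the wrong direction for your purposes.

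The paper sidesteps this by never separating $b_1$ and $b_2$. Writing $H_i=\pi_1(Y_i)=F/N_i$ and $H=F/N$ for a fixed finitely generated free $F$ (possible because the $Y_i\to Y_{i+1}$ are $\pi_1$-surjective), one has $b_2^{(2)}(H)-b_1^{(2)}(H)=\dim_\D(\D\otimes_{\Q[F]}N_{\mathrm{ab}})-d(F)+1$ and similarly with $N_i$ in place of $N$. Since $N_{\mathrm{ab}}=\varinjlim(N_i)_{\mathrm{ab}}$ and tensor products commute with colimits, the single inequality $\dim_\D(\D\otimes N_{\mathrm{ab}})\leqslant\sup_i\dim_\D(\D\otimes(N_i)_{\mathrm{ab}})$ (which does follow from cofinality, applied to the images in the colimit) yields $b_2^{(2)}(H)-b_1^{(2)}(H)\leqslant\sup_i\chi(Y_i)\leqslant 0$ directly. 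Your argument can be repaired along the same lines.
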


\begin{proof}
Assume that $H$ is non-trivial. By \cite[Lemma 4.4]{louder_20}, there is a sequence of $\pi_1$-surjective immersions of finite connected two-complexes
\[
Y_0\immerses Y_1\immerses\ldots\immerses Y_i\immerses\ldots\immerses X
\]
such that
\[
H = \varinjlim \pi_1(Y_i).
\]
Put $H_i=\pi_1(Y_i)$. Then there exists a finitely generated free group $F$ and normal subgroups $N\ge N_{i+1}\ge N_i$ ($i\in \N$) of $F$ such that
\[
H\cong F/N,\ H_i\cong F/N_i \textrm{\ and\ } N=\cup_{i\in \N} N_i.
\]
In particular, we have that
\[
N_{\text{ab}}\cong \varinjlim (N_i)_{\text{ab}}
\]
viewed as $\Q[F]$-modules. 

The $\Q[H]$-module $N_{\text{ab}}$ is the relation module arising in the exact sequence
$$0\to N_{\text{ab}}\to \Q[H]^{d(F)}\to \Q[H]\to \Q\to 0,$$ see \cite[Proposition 5.4]{Br82}. Moreover, $N_{\ab}$ is projective since $H$ has cohomological dimension at most two. Therefore, we obtain that
\begin{align*}
b_2^{(2)}(H)-b_1^{(2)}(H)&=\dim_{\D_{\Q[H]}} H_2(H; \D_{\Q[H]})-\dim_{\D_{\Q[H]}} H_1(H; \D_{\Q[H]})\\ 
					  &=\dim_{\D_{\Q[H]}} \left (\D_{\Q[H]}\otimes_{\Q[F]} N_{\text{ab}}\right )-d(F)+1.\end{align*}
Thus, we  have that 
\begin{align*}
b_2^{(2)}(H)-b_1^{(2)}(H)&=\dim_{\D_{\Q[H]}} \left (\D_{\Q[H]}\otimes_{\Q[F]} N_{\text{ab}}\right )-d(F)+1\\ 
					&\leqslant \sup_{i\in \N} \dim_{\D_{\Q[H]}} \left (\D_{\Q[H]}\otimes_{\Q[F]} (N_i)_{\text{ab}}\right )-d(F)+1\\
					&=\sup_{i\in \N} \dim_{\D_{\Q[H]}} H_2(H_i; \D_{\Q[H]})-\dim_{\D_{\Q[H]}} H_1(H_i; \D_{\Q[H]}).
\end{align*}
 Since $X$ has non-positive immersions, we see that $\chi(Y_i)\leqslant 0$ for all $i$. Since $Y_i$ is aspherical for all $i$ by \cref{npi_aspherical}, we have
\[
\chi(Y_i) = \dim_{\D_{\Q[H]}} H_2(H_i; \D_{\Q[H]})-\dim_{\D_{\Q[H]}} H_1(H_i; \D_{\Q[H]}).
\]
This finishes the proof.
\end{proof}

Louder--Wilton introduced a slight generalisation of non-positive immersions in \cite{LW17}. If $C_k$ is the standard presentation complex of $\Z/k\Z$ and $C_{k, l}$ is the $l$-fold cover of $C_k$, then $X$ has {\bf not-too-positive immersions (NTPI)} if for every immersion $Y\immerses X$ where $Y$ is a compact connected two-complex, we have that $Y$ is homotopy equivalent to a wedge of subcomplexes of $C_{k, l}$'s and a subcomplex $Z\subset Y$ with $\chi(Z)\leqslant 0$. The main result of \cite{LW17} established NTPI for presentation complexes of all one-relator groups.

Louder--Wilton \cite{LW17} prove the proposition below for two-complexes with a single two-cell. Their proof only uses the fact that such a two-complex has NTPI and that the second integral homology group of finitely generated subgroups of one-relator groups must be torsion-free. Considering instead homology with rational coefficients, we may recover the following statement for all two-complexes with NTPI. Although the argument is identical to that of \cite[Corollary 1.6]{LW17}, we include a proof for the convenience of the reader.

\begin{pro}
\label{2betti}
Let $X$ be a two-complex with NTPI. If $H$ is a finitely generated subgroup of $\pi_1(X)$ that is not a free product of finite cyclic groups, then 
\[
b_2(H)\leqslant b_1(H) - 1.
\]
\end{pro}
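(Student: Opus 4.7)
The plan is to mirror the argument of \cref{l2betti}, replacing $L^2$-homology with ordinary rational homology. First, apply \cite[Lemma 4.4]{louder_20} to obtain a sequence of $\pi_1$-surjective immersions of finite connected two-complexes
\[
Y_0 \immerses Y_1 \immerses \cdots \immerses X
\]
with $H = \varinjlim \pi_1(Y_i)$. Setting $H_i = \pi_1(Y_i) \cong F/N_i$ and $H \cong F/N$ for a fixed finitely generated free group $F$ with $N = \bigcup_i N_i$, we have $N_{\text{ab}} \cong \varinjlim N_{i,\text{ab}}$ as $\Q[F]$-modules.

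Next, the five-term exact sequence associated to $1 \to N \to F \to H \to 1$ in rational group homology yields
\[
b_2(H) - b_1(H) = \dim_\Q(\Q \otimes_{\Z[F]} N_{\text{ab}}) - d(F),
\]
and likewise for each $H_i$. Since tensor product commutes with direct limits and the dimension of a direct limit of $\Q$-vector spaces is bounded above by the supremum of the dimensions of its terms, I deduce
\[
b_2(H) - b_1(H) \leqslant \sup_i \bigl(b_2(H_i) - b_1(H_i)\bigr).
\]

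To bound the right-hand side by $-1$, I exploit the NTPI decomposition $Y_i \simeq Z_i \vee \bigvee_j W_{i,j}$ where $\chi(Z_i) \leqslant 0$ and each $W_{i,j}$ is a connected subcomplex of some $C_{k,l}$, so that $\pi_1(W_{i,j})$ is cyclic. Using additivity of Betti numbers across free products of groups together with the surjection $H_2(Y_i; \Q) \twoheadrightarrow H_2(\pi_1(Y_i); \Q)$, one finds that finite cyclic $W_{i,j}$'s contribute $0$ to $b_2 - b_1$, each circular $W_{i,j}$ contributes $-1$, and a nonempty $Z_i$ contributes $b_2(\pi_1(Z_i)) - b_1(\pi_1(Z_i)) \leqslant \chi(Z_i) - 1 \leqslant -1$. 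Hence whenever $\pi_1(Y_i)$ is not itself a free product of finite cyclic groups, we obtain $b_2(H_i) - b_1(H_i) \leqslant -1$.

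The principal obstacle is transferring the non-free-product-of-finite-cyclics hypothesis from $H$ to each $H_i$. In the easy case $b_1(H) \geqslant 1$, surjectivity $H_i \twoheadrightarrow H$ forces $b_1(H_i) \geqslant 1$, so every $H_i$ has an infinite cyclic Grushko factor and the previous analysis applies uniformly. The subtler case is $b_1(H) = 0$, where $H$ has only a freely indecomposable non-cyclic Grushko factor witnessing the hypothesis; here one must refine the approximating sequence using the NTPI structure to ensure that $\pi_1(Y_i)$ inherits such a factor for all large $i$.
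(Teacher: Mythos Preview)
Your approach follows exactly the template the paper intends (it merely says ``the proof can be carried out as in \cref{l2betti}''), and your analysis of the NTPI wedge decomposition is correct: each finite cyclic wedge summand contributes $0$ to $b_2-b_1$, each infinite cyclic summand contributes $-1$, and a nonempty $Z_i$ contributes at most $\chi(Z_i)-1\leqslant -1$ via the surjection $H_2(Z_i;\Q)\twoheadrightarrow H_2(\pi_1(Z_i);\Q)$.

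The gap is in your final paragraph. Your case split on $b_1(H)$ is not the right one, and the suggestion to ``refine the approximating sequence'' does not lead anywhere. The correct dichotomy is on whether $b_1(H_i)\geqslant 1$ for \emph{all} $i$. If so, your argument already gives $b_2(H_i)-b_1(H_i)\leqslant -1$ for every $i$ and you are done, regardless of the value of $b_1(H)$. If instead $b_1(H_{i_0})=0$ for some $i_0$, then the NTPI decomposition forces $Z_{i_0}=\emptyset$ (a nonempty connected $Z$ with $\chi(Z)\leqslant 0$ has $b_1(Z)\geqslant 1$) and every cyclic wedge factor to be finite, so $H_{i_0}$ is a free product of finite cyclic groups. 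Since the maps $H_{i_0}\twoheadrightarrow H_{i_0+1}\twoheadrightarrow\cdots$ are surjective, the same reasoning applies to every $H_j$ with $j\geqslant i_0$. Now observe that each such $H_j$ is determined up to isomorphism by its (finite) abelianisation, and $|(H_j)_{\mathrm{ab}}|$ is non-increasing along the chain; hence only finitely many isomorphism types occur. As free products of finite cyclic groups are virtually free and therefore Hopfian, any surjection between two isomorphic terms in the chain is an isomorphism, so the chain stabilises and $H=\varinjlim H_j$ is itself a free product of finite cyclic groups, contrary to hypothesis. Thus this second branch of the dichotomy is vacuous, and no refinement of the $Y_i$ is needed.
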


\begin{proof}
Since $H$ is finitely generated, by Grushko's theorem $H$ admits a free product decomposition $H\isom H_1*\ldots*H_k*F$ where each $H_i$ is freely indecomposable and finitely generated and where $F$ is a finitely generated free group. Since $b_1, b_2$ are additive with respect to free products and $b_1(H) = b_2(H) = 0$ for $H$ finite cyclic, it suffices to prove the result assuming that $H$ is freely indecomposable and not finite or infinite cyclic.

Just as Louder--Wilton do in the discussion preceeding their proof of \cite[Corollary 1.6]{LW17}, we may produce $\pi_1$-surjective immersions 
\[
Y_0\immerses Y_{1}\immerses \ldots \immerses Y_n\immerses\ldots \immerses Y\immerses X
\]
where $Y\immerses X$ is the cover with $\pi_1(Y) = H$ and with $\varinjlim Y_i = Y$. By definition of NTPI, each $Y_i$ is homotopy equivalent to a subcomplex $Z_i\subset Y_i$ with $\chi(Z_i)\leqslant 0$ wedged with finitely many copies of $C_{k, l}$ for various values of $k, l$. By Delzant's version of the well-known Scott Lemma, as proved by Swarup \cite[Theorem 2.3]{Sw04}, since $H$ is freely indecomposable, there is some $K\geqslant 0$ such that $\pi_1(Y_i)$ is freely indecomposable for all $i\geqslant K$. By possibly selecting a larger $K$, since $\pi_1(Y_i)\to \pi_1(Y_{i+1})\to H$ are surjective, we have $b_1(H) \leqslant b_1(Y_{i+1})\leqslant b_1(Y_i)$ for all $i\geqslant K$. Thus, by possibly replacing $K$ with a larger integer, we may also assume that $b_1(H) = b_1(Y_i)$ for all $i\geqslant K$. By removing the first $K$ terms in the sequence, we thus may assume that $Y_i = Z_i$ and $b_1(Y_i) = b_1(H)$ for all $i$. Since $\chi(Y_i) = 1 - b_1(Y_i) + b_2(Y_i)$ we have $b_2(Y_i)\leqslant b_1(Y_i) - 1$ for all $i$. Note that $H_2(Y, \Q)$ quotients onto $H_2(H, \Q)$ since the chain complex $C_*(\widetilde{Y})$, where $\widetilde{Y}$ is the universal cover of $Y$, is a $\Z H$-complex which can be completed to a resolution of $\Z$. Finally, since $H_2(Y, \Q) = H_2(\varinjlim Y_i, \Q) = \varinjlim H_2(Y_i, \Q)$, we have that 
\[
b_2(H)\leqslant b_2(Y)\leqslant \sup_{i\in \N}b_2(Y_i)\leqslant b_1(H) - 1
\]
and the proof is complete.
\end{proof}
 
We remark that the statement of \cite[Corollary 1.6]{LW17} is missing the assumption that $H$ is not a free product of finite cyclic groups (or the trivial group). The proof is of course entirely correct with this assumption.

 \subsection{Staggered and reducible complexes}\label{sec:staggered}

A two-complex $X$ is {\bf staggered} if there is a total order on its two-cells and a subset of its one-cells satisfying the following:
\begin{enumerate}
\item The attaching map for each two-cell traverses at least one ordered one-cell.
\item If $\alpha<\beta$ are two-cells, then $\min{\alpha}<\min{\beta}$ and $\max{\alpha}<\max{\beta}$, where $\min{\delta}$ (respectively, $\max{\delta}$) is the smallest (respectively, largest) edge traversed by the attaching map for the two-cell $\delta$.
\end{enumerate}
A group has a {\bf staggered presentation} if it has a presentation complex that is staggered. Groups with staggered presentations are a class of groups that arise naturally in the study of one-relator groups, see \cite{lyndon_01}.

If $X = (\Gamma, \lambda)$ is a two-complex, an edge $e\subset \Gamma$ is a {\bf reducing edge} (respectively, {\bf collapsing edge}) if $\lambda^{-1}(e)$ is contained in a single component (respectively, a single edge). If $e\subset \Gamma$ is a reducing edge (respectively, a collapsing edge), we call the two-complex $Z$ obtained from $X$ by removing $e$ and the unique two-cell whose attaching map traverses $e$, the {\bf reduction along $e$} (respectively, the {\bf collapse along $e$}). In this case we will say that $Z\subset X$ is a {\bf reduction} ({\bf collapse}). A reducing edge $e\subset \Gamma$ is {\bf strictly involved} if, denoting by $c\subset X$ the unique two-cell whose attaching map traverses $e$, the attaching map for $c$ is not freely homotopic within $X - c$ into its reduction $X - (e\cup c)$.

A two-complex is {\bf reducible} (respectively, {\bf collapsible}) if every subcomplex containing at least one two-cell has a reducing edge (respectively, collapsing edge) which is strictly involved.  A group has a {\bf reducible presentation} if it has a presentation complex that is reducible. This generalises staggered complexes and presentations, see \cref{injective_sub}.

Reducible two-complexes were first introduced by Howie \cite{Ho82} and shown to be aspherical and have locally indicable $\pi_1$ if without proper powers: if $X$ is reducible, then $X$ does not have {\bf proper powers} if   for each two-cell $\alpha\subset X$, the attaching map of $\alpha$ does not represent a proper power in $\pi_1(X - \alpha)$. Helfer--Wise established the non-positive immersions property for reducible complexes without proper powers in \cite[Corollaries 5.6 \& 7.6]{HW16}, stated below. A more general version is due to Howie--Short \cite{howie_20} and, independently, Millard \cite{millard_21}.

\begin{teo}
\label{reducible_npi}
If $X$ is a reducible complex without proper powers, then $X$ has non-positive immersions.
\end{teo}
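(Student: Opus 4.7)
My plan is to induct on the number of two-cells of $X$. In the base case $X$ is a graph, so any immersion $Y \immerses X$ produces a graph $Y$, which is either a tree (so $\pi_1(Y)=1$) or has $\chi(Y)\leqslant 0$. Neither the reducibility nor the no-proper-powers hypotheses are used here; both enter in the inductive step, where they must be preserved by passing to smaller complexes.

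For the inductive step, choose a reducing edge $e$ of $X$ and let $\alpha$ be the unique two-cell whose attaching map traverses $e$. Let $Z$ denote the reduction $X\setminus(e\cup\alpha)$. The first routine verification is that $Z$ is again reducible without proper powers, so that the inductive hypothesis applies to $Z$. Given a compact, connected immersion $\pi\colon Y\immerses X$, set $A=\pi^{-1}(\alpha)$ and $E=\pi^{-1}(e)$. Because $\alpha$ is the only two-cell whose attaching map traverses $e$ in $X$, every two-cell of $Y$ incident to an edge of $E$ must lie in $A$. Consequently the subcomplex $Y'=Y\setminus(A\cup\mathring E)$ comes equipped with an induced immersion $Y'\immerses Z$, and a straightforward count gives
\[
\chi(Y)=\chi(Y')+|A|-|E|=\sum_{i=1}^{k}\chi(Y_i')+|A|-|E|,
\]
where $Y_1',\dots,Y_k'$ are the connected components of $Y'$. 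By the inductive hypothesis applied to $Z$, each $Y_i'$ satisfies either $\chi(Y_i')\leqslant 0$ or $\pi_1(Y_i')=1$.

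The heart of the argument, and the main obstacle, is to combine this inductive information with the no-proper-powers hypothesis to conclude $\chi(Y)\leqslant 0$ whenever $\pi_1(Y)\neq 1$. Let $n$ be the number of times the attaching word $r$ of $\alpha$ traverses $e$; then double counting the (cell,edge) incidences between $A$ and $E$ gives $n|A|=\sum_{e'\in E}(\text{traversals of }e')$. The no-proper-powers condition, namely that $r$ is not a proper power in $\pi_1(X\setminus\alpha)$, is precisely what controls the gluing pattern of cells in $A$ along edges of $E$: any excess of $A$-cells beyond what $E$ and the simply connected components of $Y'$ can support would allow us to assemble two distinct lifts of $r$ in $Y$ that agree on a long common piece, forcing $r$ to be a proper power in $\pi_1(X\setminus\alpha)$, a contradiction.

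The delicate part of this final step is the bookkeeping in the presence of simply connected components $Y_i'$, which can contribute positively to $\chi(Y)$. The way to handle these is to observe that such a component can only be glued to $A$-cells along $E$-edges in a tree-like fashion, and that each such gluing either consumes a positive $\chi$-contribution or extends a common initial segment of two $r$-lifts; once enough is consumed, any further gluing manufactures the forbidden proper power. I expect this analysis — essentially a discharging or van Kampen-style diagrammatic argument — to be the delicate part, with the Helfer--Wise framework providing the technical infrastructure.
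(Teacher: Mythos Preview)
The paper does not give its own proof of this theorem; it records it as a result of Helfer--Wise \cite[Corollaries~5.6 \& 7.6]{HW16}, with generalisations by Howie--Short and Millard noted. There is therefore no in-paper argument to compare against beyond the citation.

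Your inductive framework is sound. Inducting on the number of two-cells, passing to the reduction $Z$, decomposing an immersed $Y$ by removing the preimages $A=\pi^{-1}(\alpha)$ and $E=\pi^{-1}(e)$, and writing $\chi(Y)=\sum_i\chi(Y_i')+|A|-|E|$ is a natural shape for such an argument, and the claim that $Z$ is again reducible without proper powers is indeed routine.

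The gap is that the step you call ``the heart of the argument'' is not proved but deferred. You need to bound $|A|$ against $|E|$ and the positive contributions of the simply connected components $Y_i'$, and your justification is a heuristic (two lifts of $r$ sharing a long common piece force a proper power) followed by an explicit appeal to ``the Helfer--Wise framework providing the technical infrastructure''. This is not a detail one can black-box: already when $X$ has a single two-cell, the required bound is precisely the $w$-cycles conjecture, and its proof is the main technical content of both \cite{HW16} and \cite{LW17}. Helfer--Wise establish it through a theory of stackings tied to left-orderability; Louder--Wilton give an independent and equally substantial combinatorial argument. Neither reduces to a routine discharging computation, and the ``long common piece'' heuristic, while in the right spirit, is far from a proof --- making precise when overlapping lifts force $r$ to be a proper power \emph{in $\pi_1(X\setminus\alpha)$} (as opposed to merely in the free group on the $1$-skeleton) is exactly where the work lies.

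In short, your plan is compatible with how the result is actually proved, but the single step you flag as delicate and outsource is the entire theorem.
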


If $H$ and $G$ are groups and $w\in G*H$ is an element that is not conjugate into $G$ or $H$, then we say $G*H/\normal{w}$ is a {\bf one-relator product} of $H$ and $G$. If $w$ is moreover not a proper power in $G*H$, then $G*H/\normal{w}$ is a {\bf one-relator product without proper powers}. With this terminology, the class of groups isomorphic to the fundamental groups of reducible two-complexes (without proper powers) can also be described as the smallest class $\mathcal R$ of groups containing all free groups that is closed under free products and one-relator products (without proper powers).

Let $F$ be a free group with respect to free generators $X$ and let $R=\{r_1,\ldots, r_n\}$ be a subset of $F$. We say that $R$ is {\bf strictly reducible with respect to $X$} if for some  $X_0\subset X$ and $\{x_1,\ldots, x_n\} \subset X$ we have that for each $1\le i\le n$,
\begin{enumerate}

\item $r_i\in  \langle X_i\rangle \cong \langle X_{i-1}\rangle *\langle x_i\rangle$, where $X_i=X_0\cup \{x_1,\ldots, x_i\} $;

\item for some $s\ge 1$, $r_i=a_1\cdot b_1\cdot \ldots \cdot a_s\cdot b_s$, where $a_j\in  \langle X_{i-1}\rangle $ and $b_j\in \langle x_i\rangle$ and the image of each $b_i$ in the group $G_i=\langle X_{i-1}\rangle /\normal{r_1,\ldots, r_{i-1}}$ is non-trivial;

\item the image of  $r_i$ in $G_{i-1}*\langle x_i\rangle$ is not a proper-power.
\end{enumerate}

Observe that if  the two-complex associated to a presentation $\langle X|R\rangle$ is reducible without proper powers and $R$ finite then we can modify $R$ and obtain    $R^\prime\subset F(X)$ such that $\langle X|R\rangle\cong  \langle X|R^\prime \rangle$ and $R^\prime$ is strictly reducible with respect to $X$. We argue by induction on $|R|$. Assume we have constructed  
$\{r_1^\prime, \ldots, r_{n-1}^\prime\} \subset \langle X \rangle$ such that  
$\{r_1^\prime, \ldots, r_{n-1}^\prime\}$ is strictly reducible with respect to $X$ and  
$G_{n-1} = \langle X \mid r_1, \ldots, r_{n-1} \rangle \cong \langle X \mid r_1^\prime, \ldots, r_{n-1}^\prime \rangle$.  

Since $\langle X \mid R \rangle$ is reducible, there exists $x_n \in X$ that appears only in $r_n$. Put  
$X_{n-1} = X \setminus \{x_n\}$ and $H_{n-1} = \langle X_{n-1} \mid r_1^\prime, \ldots, r_{n-1}^\prime \rangle$.  
Thus, we have that $G_{n-1} \cong H_{n-1} * \langle x_n \rangle$.  

Let $\overline{r_n}$ be the image of $r_n$ in $G_{n-1}$. If $\overline{r_n} \in \langle x_n \rangle$, then since it is not a proper power,  
$\overline{r_n} = x_n^{\pm 1}$, and we can take $r_n^\prime = x_n$. On the other hand, since the presentation complex for $\langle X \mid R \rangle$ is reducible,  
$\overline{r_n} \notin H_{n-1}$.  
Thus, some conjugate of $\overline{r_n}$ can be represented as  
\[
\overline{r_n} = \overline{a_1} \cdot b_1 \cdot \ldots \cdot \overline{a_s} \cdot b_s,
\]
where  $s\ge 1$, $1 \neq \overline{a_j} \in H_{n-1}$ and $1 \neq b_j \in \langle x_n \rangle$.  
Now, $r_n^\prime$ is constructed by lifting $\overline{a_j}$ to $\langle X_{n-1} \rangle$.

We will say that a subgroup $W$ of a free group $F$ is {\bf strictly reducible} if there is a set of free generators $X$ of $F$ and a strictly reducible with respect to $X$ subset $R$ of $F$ such that $W$ is generated by $R$.

We conclude this section with the following observation. Let $X$ be a reducible two-complex (without proper powers) and let $Y$ be the two-complex obtained from $X$ by adding a vertex and attaching an edge joining each vertex of $X$ with this new vertex. We have $\pi_1(Y)\cong \pi_1(X)*F$ where $F$ is some free group. By collapsing all the new edges to the new vertex we obtain a new reducible two-complex (without proper powers) with a single vertex. Hence $\pi_1(X)*F$ has a reducible presentation (without proper powers).

\subsection{Bireducible complexes}

In order to facilitate the proof of  \cref{staggered_coherent}, we shall need to introduce a new class of two-complexes that lie in between staggered and reducible two-complexes. We say a two-complex $X$ is {\bf bireducible} if every subcomplex $Z\subset X$ containing at least two two-cells, has at least two reducing edges associated to distinct two-cells. 

The aim of this section is to show that such two-complexes have NTPI. We first require a version of the classic Magnus Freiheitssatz \cite{magnus_30} for bireducible complexes.

\begin{lem}
\label{injective_sub}
If $X$ is a finite bireducible two-complex, then the following hold:
\begin{enumerate}
\item If $Z\subset X$ is a reduction, then the homomorphism $\pi_1(Z)\to \pi_1(X)$ induced by inclusion is injective for any choice of basepoint.
\item If $U\subset X$ is a subcomplex containing a single two-cell whose attaching map is surjective, then $\pi_1(U)\to \pi_1(X)$ is injective.
\end{enumerate}
In particular, $X$ is reducible.
\end{lem}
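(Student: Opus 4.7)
The plan is to establish both parts by joint strong induction on the number $n$ of two-cells of $X$, handling part (1) first at each inductive stage so that the argument for part (2) can appeal to it. The base case $n = 1$ reduces to the classical Magnus Freiheitssatz: for part (1), collapsing a spanning tree of $X^{(1)}$ presents $\pi_1(X)$ as a one-relator group in which $\pi_1(Z)$ appears as a Magnus subgroup, and the Magnus subgroup theorem furnishes the required injection. For part (2), the hypothesis that the unique two-cell $\alpha$ lies in $U$ forces the attaching map $\lambda_\alpha$ to land in $U^{(1)}$; combined with surjectivity of $\lambda_\alpha$ onto $U^{(1)}$, this means $X$ is built from $U$ by attaching a graph of edges untouched by $\lambda_\alpha$, so a Seifert--van Kampen computation yields a free splitting $\pi_1(X) \cong \pi_1(U) * F$ from which the injection follows.

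For the inductive step of part (1), let $Z = X \setminus (e \cup \alpha)$ be the given reduction. By bireducibility there is a second reducing edge $e' \neq e$ with associated two-cell $\alpha' \neq \alpha$. Set $X' = X \setminus (e' \cup \alpha')$ and $Z' = Z \cap X'$. The subcomplexes $Z$ and $X'$ are themselves bireducible and have $n - 1$ two-cells, and the edges $e, e'$ remain reducing in $X', Z$ respectively (with the same associated two-cells, since removing a different two-cell does not affect the single-component condition on $\lambda^{-1}$). The inductive hypothesis for part (1) then yields that $\pi_1(Z') \to \pi_1(Z)$ and $\pi_1(Z') \to \pi_1(X')$ are injective on every component. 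The pushout $X = Z \cup_{Z'} X'$ realises $X$ as a graph of spaces with vertex spaces $Z, X'$ and edge spaces the components of $Z'$, and Bass--Serre theory gives the embedding $\pi_1(Z) \hookrightarrow \pi_1(X)$.

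For the inductive step of part (2), let $\alpha$ be the unique two-cell of $U$ and use bireducibility to select a reducing edge $e$ of $X$ whose associated two-cell $\beta$ satisfies $\beta \neq \alpha$. Since $\lambda^{-1}(e)$ lies in $\beta$'s attaching circle, $\lambda_\alpha$ does not traverse $e$; combined with surjectivity of $\lambda_\alpha$ onto $U^{(1)}$, this forces $e \notin U^{(1)}$. Hence $U$ sits as a subcomplex of $X' := X \setminus (e \cup \beta)$, still satisfying the hypotheses of part (2). The inductive hypothesis yields $\pi_1(U) \hookrightarrow \pi_1(X')$, and part (1) for $X$, established above, yields $\pi_1(X') \hookrightarrow \pi_1(X)$; composition yields the desired injection. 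The principal obstacle is the possible disconnectedness of $Z'$ in the pushout step of part (1), which is precisely what the graph-of-spaces / Bass--Serre theory interpretation of Seifert--van Kampen is designed to handle; verifying that reducing edges persist under the removal of unrelated two-cells is the routine combinatorial check underpinning the whole scheme.
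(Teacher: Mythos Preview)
Your argument is correct and follows essentially the same inductive scheme as the paper: both proofs induct on the number of two-cells, invoke the Magnus Freiheitssatz in the base case, and in the inductive step use bireducibility to find a second reduction $X'$ so that $X = Z \cup_{Z'} X'$ with $Z' = Z \cap X'$ a reduction of each, yielding an amalgamated decomposition into which $\pi_1(Z)$ embeds. The only cosmetic differences are that the paper handles possible disconnectedness of $Z$, $X'$, $Z'$ by attaching auxiliary edges (so as to write a single amalgamated free product), whereas you invoke a graph-of-spaces decomposition directly; and your treatment of part~(2), both in the base case and the inductive step, is spelled out in more detail than the paper's one-line ``$U$ must lie in $Y$ or $Z$''.
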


\begin{proof}
The proof is by induction on the number of two-cells in $X$. We prove the two statements at once. In the base case, the first statement is the Magnus' Freiheitssatz \cite{magnus_30} and the second statement is clear. Now suppose that $X$ contains more than one two-cell and assume the inductive hypothesis. As $X$ is bireducible, there is a second reduction $Y\subset X$ such that $X = Y\cup Z$ and $Y\cap Z$ is a reduction of $Y$ and $Z$. If $Z$ is not connected, then we attach an edge connecting the components. We do the same for $Y$ and $Y\cap Z$. Attaching edges modifies the fundamental group by possibly adding a free group as a free factor. By induction, $\pi_1(Y\cap Z)\to \pi_1(Y), \pi_1(Z)$ are injective and so we have
\[
\pi_1(X) \isom \pi_1(Y)*_{\pi_1(Y\cap Z)}\pi_1(Z)
\]
from whence the first statement follows. Now $U$ must either be contained in $Y$ or in $Z$ and so the second statement also follows by induction.

In order to complete the proof we just need to show that for every reduction $Z\subset X$, the reducing edge $e\subset X - Z$ is strictly involved. If $U\subset X$ is the smallest subcomplex containing the two-cell in $X - Z$, then the second statement we proved above implies that we just have to prove this for $U$. But this is Magnus' Freiheitssatz once more.
\end{proof}

Let $\Lambda$ be a connected subgraph of a bireducible complex $X$. We say that $\Lambda$ is {\bf small} if for every subcomplex $Y$ of $X$  containing $\Lambda$, $Y$ is either a graph or there exists a reduction $Z\subset Y$ such that $\Lambda\subseteq Z$. By \cref{injective_sub}, the map $\pi_1(\Lambda)\to \pi_1(X)$ is injective when $X$ is finite and so we are justified in calling $\pi_1(\Lambda)$ a {\bf Magnus subgroup} of $\pi_1(X)$. In the language of \cref{injective_sub}, any proper subgraph $\Lambda\subset U$ is small. Indeed, let $Y$ be a subcomplex of $X$ containing $\Lambda$ and assume that it is not a graph.  
If the two-cell of $U$ is the only two-cell of $Y$, then we can reduce any edge of $U$ not belonging to $\Lambda$. 
If $Y$ contains another two-cell, then we can make a reduction using an edge that is not in $U$.

An immersion $\lambda\colon S^1\immerses \Gamma$ of a cycle is {\bf primitive} if it does not factor through a proper cover $S^1\immerses S^1$. If $\lambda\colon\mathbb{S}\immerses \Gamma$ is an immersion of a disjoint union of circles, we call it primitive if the restriction to each component is primitive. 

\begin{lem}
\label{proper_powers}
Let $X = (\Gamma, \lambda)$ be a finite bireducible two-complex. Then $X$ is a reducible two-complex without proper powers if and only if $\lambda$ is primitive.
\end{lem}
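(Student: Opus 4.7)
The plan is to prove the two implications in turn. The forward direction is routine: if some component of $\lambda$ corresponding to a two-cell $\alpha$ is not primitive, then the attaching map factors through a proper $m$-fold cover $S^1\to S^1\to\Gamma$ with $m\geq 2$, so $r_\alpha=u^m$ in $\pi_1(\Gamma)$ for some non-trivial $u$. This equality passes to the quotient $\pi_1(X-\alpha)$; provided the image of $u$ is non-trivial there (which can be arranged using the local indicability of $\pi_1(X)$ guaranteed by \cref{reducible_npi} and \cref{npi_indicable}, together with the fact that $u$ is represented by an immersion of $S^1$ into $\Gamma$), $r_\alpha$ becomes a proper power in $\pi_1(X-\alpha)$, contradicting the hypothesis.

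For the reverse direction I would induct on the number $k$ of two-cells of $X$. The base case $k=1$ is immediate: $\pi_1(X-\alpha)=\pi_1(\Gamma)$ is free, and primitivity of the attaching map of $\alpha$ is exactly the statement that $r_\alpha$ is not a proper power in $\pi_1(\Gamma)$. For the inductive step $k\geq 2$, fix a two-cell $\alpha$. By bireducibility of $X$ there is a reducing edge $e$ associated to some two-cell $\beta\neq\alpha$. The reduction $Z=X-\beta-e$ is bireducible, has $k-1$ two-cells, and inherits primitive attaching maps, so by induction $Z$ is reducible without proper powers; in particular $r_\alpha$ is not a proper power in $\pi_1(Z-\alpha)=\pi_1(X-\alpha-\beta-e)$. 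Since $Z-\alpha$ is a reduction of the bireducible complex $X-\alpha$, \cref{injective_sub} yields an injection $\pi_1(Z-\alpha)\hookrightarrow\pi_1(X-\alpha)$.

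The crux is to upgrade this embedding to a root-closed subgroup. Because $e$ is traversed only by $r_\beta$ among the attaching maps of $X-\alpha-\beta$, one obtains a free product decomposition $\pi_1(X-\alpha-\beta)\cong A*\langle\tilde e\rangle$, where $A=\pi_1(Z-\alpha)$ is locally indicable by applying the induction to the still smaller bireducible complex $Z-\alpha$. Thus $\pi_1(X-\alpha)\cong (A*\Z)/\normal{r_\beta}$ is a one-relator quotient of a locally indicable group; the inductive hypothesis applied to $X-\alpha$ (which is bireducible with $k-1$ two-cells) shows that $r_\beta$ is not a proper power in $A*\Z$, and then a Magnus-type malnormality theorem for Magnus subgroups of one-relator quotients of locally indicable groups (in the spirit of \cite{howie_84}) implies that $A$ is malnormal, and hence root-closed, in $\pi_1(X-\alpha)$. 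Any equation $r_\alpha=v^n$ with $v\neq 1$ and $n\geq 2$ in $\pi_1(X-\alpha)$ then forces $v\in A$, contradicting the inductive hypothesis. The main obstacle I anticipate is locating or independently supplying the precise malnormality statement required, along with handling the degenerate cases (such as when $e$ is a bridge, or when $r_\beta$ is a single occurrence of $\tilde e$ and hence conjugate into a free factor).
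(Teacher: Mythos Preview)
Your approach differs substantially from the paper's, and the reverse direction has a genuine gap. The paper avoids induction and any root-closedness statement by working instead with the smallest subcomplex $U\subset X$ containing a given two-cell $\alpha$; this $U$ has a single two-cell, and \cref{injective_sub}(2) gives $\pi_1(U)\hookrightarrow\pi_1(X)$. The whole lemma is then controlled by the Karrass--Magnus--Solitar characterisation of torsion in the one-relator group $\pi_1(U)$: $\pi_1(U)$ has torsion if and only if the attaching map of $\alpha$ is not primitive. For the direction you call forward, for instance, if $r_\alpha=u^m$ with $u$ primitive in $\pi_1(U^{(1)})$ and $m\ge2$, then $u$ has order exactly $m$ in $\pi_1(U)\hookrightarrow\pi_1(X)$, so $u\neq1$ in $\pi_1(X)$ and hence in $\pi_1(X-\alpha)$. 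This is cleaner than your route via \cref{reducible_npi}, whose hypothesis is precisely ``without proper powers''; invoking it to establish $u\neq1$ in $\pi_1(X-\alpha)$ is uncomfortably close to circular and in any case the step from local indicability of $\pi_1(X)$ to $u\neq1$ in $\pi_1(X-\alpha)$ is left unjustified. The other direction is likewise argued in the paper through torsion in $\pi_1(U)$ rather than through any malnormality.

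The malnormality you need for your inductive reverse direction---that the Magnus subgroup $A=\pi_1(Z-\alpha)$ is root-closed in the one-relator quotient $(A*\Z)/\normal{r_\beta}$ for locally indicable $A$---is not in the paper's toolbox and is not a consequence of the locally-indicable Freiheitssatz. This is a real missing ingredient, not a routine detail, as your own list of ``degenerate cases'' already hints. The paper's route via the embedded one-relator piece $\pi_1(U)$ and Karrass--Magnus--Solitar sidesteps this entirely, trading a delicate root-closedness question about $\pi_1(X-\alpha)$ for the classical torsion dichotomy in a single one-relator group.
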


\begin{proof}
Let $S^1\immerses \Gamma$ be the attaching map of one of the two-cells $\alpha$. By \cref{injective_sub}, if $U\subset X$ is the smallest subcomplex containing $\alpha$, then $\pi_1(U)$ is a subgroup of $\pi_1(X)$. If $S^1\immerses \Gamma$ represents a proper power in $\pi_1(X-\alpha)$, then it would be homotopic to a cycle $S^1\immerses U^{(1)}$ that is not primitive. In particular, $\pi_1(U)$ would have torsion. By the characterisation of one-relator groups with torsion due to Karrass--Magnus--Solitar \cite{karrass_60}, $\pi_1(U)$ is torsion-free if and only if $S^1\immerses\Gamma$ is primitive.
\end{proof}

The following theorem follows by combining \cref{proper_powers} with \cite[Theorem 5.5 \& Corollary 7.6]{HW16}. Since the terminology they use is different, we include a proof to explain how the results are used explicitly.

\begin{teo}
\label{bireducible_immersion}
Let $\Gamma$ be a finite graph and let $\lambda\colon \mathbb{S}\immerses\Gamma$ be an immersion of primitive cycles so that $X = (\Gamma, \lambda)$ is a bireducible two-complex. Let $\Theta$ be a finite connected non-empty graph and $\theta\colon\Theta\immerses \Gamma$ an immersion. If $\mathbb{S}'$ denotes the union of the cycles of the pullback graph $\Theta\times_{\Gamma}\mathbb{S}$, then either $(\Theta, \mathbb{S}'\immerses\Theta)$ is collapsible, or
\[
|\pi_0(\mathbb{S}')|\le  -\chi(\Theta).
\]
\end{teo}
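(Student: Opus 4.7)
The plan is to feed the pullback back through the non-positive immersions property of $X$. First, since $\lambda$ is primitive and $X$ is bireducible, \cref{proper_powers} tells us that $X$ is a reducible two-complex without proper powers, and hence by \cref{reducible_npi}, $X$ has non-positive immersions. This is the engine the argument will run on.

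Next I would form the two-complex $Y=(\Theta,\mathbb{S}'\immerses \Theta)$ and argue that the given immersion $\theta\colon \Theta\immerses \Gamma$ extends to a (branched) immersion $Y\to X$, using that each cycle of the pullback $\Theta\times_\Gamma \mathbb{S}$ covers a primitive attaching circle of $X$. Since $\Theta$ is finite, non-empty and connected, $Y$ inherits these properties, and an elementary count gives
\[
\chi(Y)=\chi(\Theta)+\pi_0(\mathbb{S}').
\]
Applying the non-positive immersions hypothesis to $Y\immerses X$ splits into two cases: either $\chi(Y)\leqslant 0$, which rearranges to the required inequality $\pi_0(\mathbb{S}')\leqslant -\chi(\Theta)$; or $\pi_1(Y)=1$, and in this case one wants to conclude that $(\Theta,\mathbb{S}'\immerses \Theta)$ is collapsible.

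The only nontrivial step is the passage from $\pi_1(Y)=1$ to collapsibility, and this is the main obstacle. My proposed approach is to induct on the number of two-cells of $X$ traversed by the image of $Y$, mirroring the inductive reduction structure used in \cref{injective_sub}: pick a reducing edge $e\subset X$ for some two-cell $\alpha$, look at its preimage in $\Theta$, and examine how the cycles of $\mathbb{S}'$ cover $\partial\alpha$. Because each such cycle is a cover of $\partial\alpha$ and because $Y$ is simply connected, a degree analysis should force one of these cycles to cover $\partial\alpha$ with degree one and to meet some preimage edge of $e$ exactly once, producing a collapsing edge of $Y$. Collapsing it yields a smaller simply connected two-complex immersing into the reduction $X-\alpha$ of $X$, which is again reducible without proper powers, so the inductive hypothesis applies. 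In practice, the combined citation \cite[Theorem 5.5 \& Corollary 7.6]{HW16} already packages exactly this stronger collapsibility alternative for immersions into reducible complexes without proper powers, and invoking it closes the argument.
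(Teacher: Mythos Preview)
Your final move---invoking \cite[Theorem 5.5 \& Corollary 7.6]{HW16} together with \cref{proper_powers}---is exactly the paper's proof, verbatim. So the proposal lands in the right place.

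That said, the detour you take through non-positive immersions has a genuine gap, and it is worth naming since you present it as an independent line of argument before falling back on the citation. The map $Y\to X$ you describe is \emph{not} in general an immersion in the sense used throughout the paper: maps between two-complexes are required to be combinatorial, meaning $n$-cells go homeomorphically to $n$-cells. A cycle of $\mathbb{S}'$ may cover its image cycle in $\mathbb{S}$ with degree $d>1$, in which case the corresponding two-cell of $Y$ maps to a two-cell of $X$ as a $d$-fold branched cover, not a homeomorphism. Calling it a ``(branched) immersion'' does not help, because the non-positive immersions property of $X$ is only stated for honest immersions $Y\looparrowright X$. So the step ``apply NPI to $Y\looparrowright X$'' does not go through as written, and this is precisely why one needs the sharper dichotomy from Helfer--Wise (their Theorem~5.5), which is formulated at the level of graphs and pullbacks of attaching circles rather than via an immersion of two-complexes. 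Your inductive sketch for deducing collapsibility from $\pi_1(Y)=1$ is essentially a re-derivation of that Helfer--Wise result; it is plausible but not carried out, and in any case subsumed by the citation you end with.
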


\begin{proof}
Let $\lambda = \sqcup \lambda_i$ where each $\lambda_i$ is an immersion of a single circle $S_i\isom S^1$. By giving labels to the edges in $\Gamma$, choosing a basepoint and orientation in $S^1$ and pulling back the labelling, we may assign a label $w_i$ to each $\lambda_i$. Helfer--Wise define a $w_i$-cycle in $\Theta\immerses\Gamma$ to be a cycle $S^1\immerses \Theta$ which, after pulling back labels from $\Gamma$ and choosing a basepoint and orientation on $S^1$, is labelled by $w_i^n$ for some $n\geqslant 1$. Two $w_i$-cycles are equivalent if there is an isomorphism between the two copies of $S^1$ making all the maps commute. Then Helfer--Wise define $\overline{\#}_{w_i}(\Theta)$ to be the number of equivalence classes of $w_i$-cycles in $\Theta$ and $\overline{\#}_w(\Theta) = \sum\overline{\#}_{w_i}(\Theta)$. Since each cycle $\lambda_i$ is primitive, it is readily seen that there is a bijection between the equivalence classes of $w_i$-cycles in $\Theta$ and the components of $\Theta\times_{\Gamma}S_i$ that are circles (note that every component is either a circle or contractible). Thus, since $\Theta\times_{\Gamma}\left(\sqcup S_i\right) = \sqcup\left(\Theta\times_{\Gamma}S_i\right)$, we have that $\overline{\#}_w(\Theta)$ is the number of components of $\Theta\times_{\Gamma}\left(\sqcup S_i\right)$ that are circles. In particular, $\pi_0(\mathbb{S}') = \overline{\#}_w(\Theta)$.

\cite[Corollary 7.6]{HW16} states that a finite reducible two-complex without proper powers is slim. We shall not need to define what it means to be slim as we are going to apply another result directly from \cite{HW16}. By \cref{proper_powers}, the two-complex $X = (\Gamma, \lambda)$ is a finite reducible two-complex without proper powers. Hence $X$ is slim.

\cite[Theorem 5.5]{HW16} states that if $X = (\Gamma, \lambda)$ is a slim two-complex, and $\Theta\immerses \Gamma = X^{(1)}$ is an immersion of graphs, then either 
\[
|\pi_0(\mathbb{S}')| = \overline{\#}_w(\Theta)\leqslant b_1(\Theta) - 1 = - \chi(\Theta),
\]
or $(\Theta, \mathbb{S}'\immerses\Theta)$ collapses to a tree. Hence, the proof is complete.
\end{proof}

\begin{rem*}
In the proof of \cref{bireducible_immersion} we showed that, in the language of \cite{HW16}, a bireducible complex without proper powers is slim. With a more careful proof it may also be shown that they are also bi-slim, a slightly stronger property. We will not need this fact for our purposes.
\end{rem*}

The proof of the following corollary is identical to that of \cite[Corollary 1.5]{LW17}, replacing the use of \cite[Theorem 1.2]{LW17} with \cref{bireducible_immersion}. We remark here that in \cite{LW17}, the term `reducible' is used to mean `has a collapsing edge' in our terminology.

\begin{cor}
\label{ntpi}
If $X$ is a bireducible two-complex, then $X$ has NTPI.
\end{cor}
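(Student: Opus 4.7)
The plan is to follow the proof of \cite[Corollary 1.5]{LW17} essentially verbatim, with \cref{bireducible_immersion} taking the place of \cite[Theorem 1.2]{LW17}. Given an immersion $\phi\colon Y\immerses X$ with $Y$ a finite connected two-complex, write $Y = (\Theta,\mu)$ where $\Theta = Y^{(1)}$ and $\mu\colon \mathbb{S}_Y\immerses\Theta$ is the disjoint union of the attaching maps of the two-cells of $Y$. I would induct on the number of two-cells of $Y$; the base case of no two-cells is immediate, since a finite graph of non-positive Euler characteristic can be taken for $Z$, while a finite tree is contractible and can be realised as a single vertex in any $C_{k,l}$.

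First I would handle non-primitive pieces. If some component of $\mu$ factors as $S^1 \xrightarrow{z\mapsto z^m} S^1 \immerses \Theta$ with $m > 1$, then the corresponding two-cell together with the image circle is a subcomplex of $Y$ that is homotopy equivalent to a subcomplex of some $C_{k,l}$. I would split it off as a wedge summand (joined to the rest of $Y$ by an arc if needed to preserve connectedness) and apply the inductive hypothesis to the remainder, which has strictly fewer two-cells.

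Hence I may assume $\mu$ is primitive. I would then apply \cref{bireducible_immersion} to the immersion $\theta\colon \Theta \immerses \Gamma = X^{(1)}$, where $\mathbb{S}'$ denotes the union of the cycle components of the pullback $\Theta \times_\Gamma \mathbb{S}$. In the collapsible case, iterated collapses realise a homotopy equivalence that strips away the two-cells of $Y$ one by one, reducing to the graph case already handled. In the other case, $\pi_0(\mathbb{S}') \le -\chi(\Theta)$; since each two-cell of $Y$ is attached along a primitive cycle in $\Theta$ which is a cycle component of the pullback, the number of two-cells of $Y$ is at most $\pi_0(\mathbb{S}')$, and therefore
\[
\chi(Y) = \chi(\Theta) + \#\{\text{two-cells of } Y\} \le \chi(\Theta) + \pi_0(\mathbb{S}') \le 0,
\]
so one may take $Z = Y$ with no $C_{k,l}$-summands.

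The hard part will be the bookkeeping in the splitting step: one must ensure that the decomposition preserves connectedness of $Y$ and that the subcomplex peeled off genuinely embeds, up to homotopy, into some $C_{k,l}$. A subtle point is that a single component of $\mathbb{S}'$ might correspond to more than one two-cell of $Y$ if $X$ has parallel two-cells sharing an attaching map, but any such extra two-cell is itself a subcomplex of a $C_{k,l}$ and may be split off in advance, exactly as in the non-primitive step. With these small adjustments the argument of \cite{LW17} transfers without further change.
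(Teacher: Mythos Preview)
Your proposal is correct and follows the same route as the paper, which also simply defers to \cite[Corollary~1.5]{LW17} with \cref{bireducible_immersion} in place of \cite[Theorem~1.2]{LW17}. One minor imprecision in your outline: the hypothesis of \cref{bireducible_immersion} is that the attaching maps $\lambda$ of $X$ are primitive (not those of $Y$), so strictly speaking one applies it to $\overline X$ rather than to $X$ itself; this passage is already part of the argument in \cite{LW17} and is therefore covered by your invocation of that source.
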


\section{Criteria for the coherence of group algebras and homological coherence}
 \label{sect:cohgral}

The proof of the following result is a variation of an argument due to Kropholler--Linnell--L\"{u}ck \cite[Lemma 4]{KLL09}.

\begin{lem}
\label{fg}
Let $R$ be a ring and assume that $R$ can be embedded in a division ring $\D$. If $P$ is a projective $R$-module, then $P$ is finitely generated if and only if $\dim_{\D}(P\otimes_{R}\D)<\infty$.
\end{lem}

\begin{proof}
If $P$ is finitely generated then certainly $\dim_{\D}(P\otimes_{R}\D)<\infty$. Suppose now that $\dim_{\D}(P\otimes_{R}\D) =n<\infty$. Since $\D$ is a division ring, there are elements $m_1,\ldots, m_n\in P$ such that $$\D\otimes_R P=\sum_{i=1}^n \D\left (1\otimes  m_i\right ) .$$
 
 Since $P$ is projective, there exists a free left $R$-module $L$ such that $L=P_1\oplus P_2$ where $P = P_1$. Thus, any element $l\in L$ can be uniquely written as $l=p_1+p_2$, where $p_1\in P_1$ and $p_2\in P_2$. We define $\pi_{P_1}:L\to P_1$ by $\pi_{P_1}\left (l\right ) =p_1$.
 
  Put $N=\sum Rm_i$. Since $N$ is finitely generated, there exists a finitely generated free summand $L_1$ of $L$ that contains $N$. Consider the natural map
 $\tau: P_1/N\to L/L_1$. Since $\D\otimes_{R}\left (P_1/N\right ) =0$, $\D\otimes_{R} \im \tau=0$. Thus, since $ \im \tau\le  L/L_1$ is a submodule of a free $R$-module, $\im \tau =\{0\}$.
 This implies that $P_1\le L_1$, and so $$P_1=\pi_{P_1}\left (P_1\right ) =\pi_{P_1}\left (L_1\right ) $$ is finitely generated.
\end{proof}

\begin{pro}\label{fp}
Let $R$ be a ring and assume that $R$ can be embedded in a division ring $\D$. Let $M$ be a finitely generated  left $R$-module of projective dimension at most 1.
Then $\dim_{\D} \Tor_1\left (\D,M\right ) $ is finite if and only if $M$ is finitely presented.
 \end{pro}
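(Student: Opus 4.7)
The forward implication is straightforward: if $M$ is finitely presented, a finite free presentation $R^k\to R^n\to M\to 0$ together with $\operatorname{pd}_R(M)\leqslant 1$ gives a finite projective resolution $0\to P_1\to R^n\to M\to 0$ with $P_1$ finitely generated projective (the kernel of $R^n\to M$ is finitely generated and has projective dimension zero, hence is projective), and the $\Tor$ long exact sequence embeds $\Tor_1(\D,M)$ into the finite-dimensional $\D$-vector space $\D\otimes_R P_1$.

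For the reverse implication, fix a projective resolution $0\to P_1\to R^n\to M\to 0$ with $P_1$ projective. Applying $\D\otimes_R-$ yields
\[
0\to \Tor_1(\D,M)\to \D\otimes_R P_1\to \D^n\to \D\otimes_R M\to 0,
\]
so $\dim_\D(\D\otimes_R P_1)<\infty$ by hypothesis. Since $P_1$ is a direct summand of a free $R$-module, the canonical map $P_1\to \D\otimes_R P_1$ is injective, and $P_1$ therefore embeds in a finite-dimensional $\D$-vector space. The goal is then to show $P_1$ is finitely generated.

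To extract finitely many generators the plan is to choose $p_1,\ldots,p_m\in P_1$ whose images in $\D^n$ form a $\D$-basis of the finite-dimensional subspace $V=\D\cdot P_1\subseteq \D^n$, and consider $\phi\colon R^m\to P_1$ sending $e_i\mapsto p_i$. This map is injective ($\D$-linear independence of the $1\otimes p_i$ descends to $R$-linear independence of the $p_i$ through $R\hookrightarrow \D$), and $\D\otimes_R\phi$ is an isomorphism by construction. Combining the flatness of $R^m$ and $P_1$ with the $\Tor$ long exact sequence of $0\to R^m\to P_1\to \coker(\phi)\to 0$ yields both $\D\otimes_R\coker(\phi)=0$ and $\Tor_1(\D,\coker(\phi))=0$.

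The main obstacle is promoting these vanishings to $\coker(\phi)=0$. The two conditions alone are insufficient: for instance, taking $p_1=4\in P_1=2\Z$ over $R=\Z$ gives $\coker(\phi)=\Z/2\Z$, so the $p_i$ must be chosen to exploit more structure. Following the Kropholler--Linnell--L\"uck strategy, the plan is to use the embedding $P_1\subseteq R^n$ together with the projectivity of $P_1$: a Kaplansky decomposition of $P_1$ into countably generated projective summands forces only finitely many summands to be nonzero (each non-contributing summand injects into a zero $\D$-tensor, hence vanishes), reducing the problem to the countably generated case, where the $p_i$ can be chosen coherently with the inclusion $P_1\subseteq R^n$ so that they generate $P_1$ as an $R$-module. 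This forces $\coker(\phi)=0$, shows $P_1$ is finitely generated, and completes the proof that $M$ is finitely presented.
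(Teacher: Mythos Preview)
Your setup through the point where $\dim_{\D}(\D\otimes_R P_1)<\infty$ is fine and matches the paper. The gap is exactly at the step you flag as the ``main obstacle,'' and the Kaplansky fix you propose does not close it.

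The Kaplansky decomposition correctly reduces to $P_1$ countably generated: writing $P_1=\bigoplus_\alpha Q_\alpha$ with each $Q_\alpha$ countably generated projective, the injectivity of $Q_\alpha\to \D\otimes_R Q_\alpha$ forces all but finitely many summands to vanish. But countable generation buys nothing here---your own counterexample $P_1=2\Z\subset\Z$ is already \emph{finitely} generated, and the problem of choosing the $p_i$ to hit a generating set persists unchanged. The sentence ``the $p_i$ can be chosen coherently with the inclusion $P_1\subseteq R^n$ so that they generate $P_1$'' is precisely the assertion to be proved, and no mechanism is given.

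The missing idea, and the one the paper uses, is this: a nonzero $R$-submodule of a \emph{free} $R$-module has nonzero tensor with $\D$ (since $R\hookrightarrow\D$). You cannot apply this to $\coker(\phi)=P_1/\sum Rp_i$, which need not sit inside anything free. The paper sidesteps this by embedding $P_1$ as a summand of a free module $L=P_1\oplus P_2$, picking $m_1,\dots,m_n\in P_1$ spanning $\D\otimes_R P_1$, and then choosing a finitely generated free \emph{summand} $L_1\subseteq L$ containing $N=\sum Rm_i$. The image of $P_1/N$ in $L/L_1$ is now a submodule of a free module with vanishing $\D$-tensor, hence zero; so $P_1\subseteq L_1$, and projecting $L_1$ back onto $P_1$ exhibits $P_1$ as finitely generated. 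The trick is to trade the bad quotient $P_1/N$ for its image in the free module $L/L_1$.
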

 \begin{proof}
 Since $M$ is  a finitely generated  left $R$-module of projective dimension at most 1 there exists an exact sequence of left $R$-modules
 $$0\to P_1\to P_0\to M\to 0$$
 with $P_0$ and $P_1$ projective and $P_0$ finitely generated. The calculation of $\Tor_1^R\left (\D, M\right ) $ gives us the exact sequence of left $\D$-modules
 $$0\to  \Tor_1^R\left (\D, M\right ) \to \D\otimes_R P_1\to \D\otimes_R P_0.$$
Now \cref{fg} states that $P_1$ is finitely generated if and only if $\dim_{\D} \Tor_1\left (\D,M\right )$ is finite. Since $P_1$ is finitely generated if and only if $M$ is finitely presented, the proof is complete.
\end{proof}

\begin{cor}\label{crit}
Let $R$ be a ring and assume that $R$ can be embedded in a division ring $\D$. Assume that $R$ is of global dimension at most 2 and the  right  $R$-module $\D$ is of weak dimension at most 1.
Then $R$ is coherent.
\end{cor}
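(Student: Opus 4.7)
The plan is to combine the preceding \cref{fp} with a standard homological shift. Let $I \le R$ be a finitely generated left ideal; I must show that $I$ is finitely presented.

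First I would use the global dimension hypothesis to obtain a projective resolution of $R/I$ of length at most $2$:
\[
0 \to P_2 \to P_1 \to R \to R/I \to 0.
\]
Splitting this in the middle yields an exact sequence $0 \to P_2 \to P_1 \to I \to 0$, which shows that $I$ has projective dimension at most $1$. Since $I$ is finitely generated, \cref{fp} applies, and it suffices to prove that $\dim_{\D}\Tor_1^R(\D, I)$ is finite; in fact I will show it is zero.

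For this step I would apply the functor $\D \otimes_R \argu$ to the short exact sequence $0 \to I \to R \to R/I \to 0$ and read off the long exact sequence of $\Tor$:
\[
\Tor_2^R(\D, R/I) \to \Tor_1^R(\D, I) \to \Tor_1^R(\D, R).
\]
The right-hand term vanishes because $R$ is a projective right (and left) $R$-module, so $\Tor_1^R(\D, I)$ is a quotient of $\Tor_2^R(\D, R/I)$. The hypothesis that $\D$ has weak dimension at most $1$ as a right $R$-module forces $\Tor_2^R(\D, R/I) = 0$, whence $\Tor_1^R(\D, I) = 0$.

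Combining, $\dim_{\D}\Tor_1^R(\D, I) = 0 < \infty$, so by \cref{fp} the ideal $I$ is finitely presented, and therefore $R$ is coherent. There is no real obstacle here; everything is a routine diagram chase once the two hypotheses are lined up with \cref{fp}. The only minor point to be careful about is the direction of the modules: $\D$ is assumed to have weak dimension $\leq 1$ as a \emph{right} $R$-module, and $I$ is a \emph{left} $R$-module, so the $\Tor$'s above are of the correct form $\Tor^R_i(\D, \argu)$ with $\D$ on the right.
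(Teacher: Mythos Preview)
Your proof is correct and follows essentially the same route as the paper's: both show that $I$ has projective dimension at most~$1$ by exploiting the global-dimension-$2$ hypothesis on a resolution of $R/I$, then use the dimension shift $\Tor_1^R(\D,I)\cong \Tor_2^R(\D,R/I)=0$ (you via the long exact sequence, the paper by direct identification) before invoking \cref{fp}.
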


\begin{proof}
Let $I$ be a finitely generated  left ideal of $R$. Let $0\to P\to R^k\to I\to 0$  be an exact sequence of left $R$-modules. Hence we  obtain the exact sequence
$$0\to P\to R^k\to R\to R/I\to 0.$$
Since $R$ is of global dimension at most 2, the projective dimension of $R/I$ is at most 2. Hence, by \cite[Proposition 8.6(2)]{Rotbook}, $P$ is projective, and so $I$ is of projective dimension at most 1. Since $\Tor_1\left (\D,I\right ) =\Tor_2\left (\D, R/I\right ) $ and  
the  right  $R$-module $\D$ is of weak dimension at most 1, $\Tor_1\left (\D,I\right ) =0$. Thus, by Proposition \ref{fp}, $I$ is finitely presented, and so, $R$ is coherent.
\end{proof}
 As an application we show that the group algebras of ascending HNN-extensions of free groups are  coherent.
\begin{teo}\label{mappingtorus}
Let $K$ be a field and  $G$  an ascending  HNN-extension of a free group. Then the group algebra  $K[G]$ is coherent.
\end{teo}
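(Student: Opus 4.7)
The plan is to apply \cref{crit} to $R = K[G]$ and $\D = \D_{K[G]}$, by checking that $K[G]$ embeds in a division ring, that $K[G]$ has global dimension at most $2$, and that $\D$ has weak dimension at most $1$ as a right $K[G]$-module.

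Write $G = F *_\phi = \langle F, t \mid txt^{-1} = \phi(x),\ x \in F\rangle$ with $\phi\colon F \to F$ an injective endomorphism. The stable-letter map $G \twoheadrightarrow \Z$ has kernel $N = \bigcup_{n \ge 0} t^{-n}Ft^n$, a directed union of free groups and therefore locally free. Thus $G$ is an extension of a locally indicable group by $\Z$, hence itself locally indicable. For $K$ of characteristic zero, \cite{JL20} then yields the existence of the Hughes-free division ring $\D = \D_{K[G]}$ (in positive characteristic, existence must be handled by a separate argument, after which \cref{grater} ensures $\D$ is Linnell), and $K[G] \hookrightarrow \D$. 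The presentation $2$-complex of $G$ is a mapping torus of a graph endomorphism, hence aspherical, so the cohomological dimension of $G$ is at most $2$ and \cref{gldim} gives that $K[G]$ has global dimension at most $2$.

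For the weak-dimension condition, the Bass-Serre tree of the HNN-decomposition produces a Mayer-Vietoris long exact sequence
\[
\ldots \to \Tor_n^{K[F]}(\D, M) \xrightarrow{\psi_n} \Tor_n^{K[F]}(\D, M) \to \Tor_n^{K[G]}(\D, M) \to \Tor_{n-1}^{K[F]}(\D, M) \to \ldots
\]
for every left $K[G]$-module $M$, with $\psi_n$ reflecting the difference between the two HNN-embeddings of $K[F]$ into $K[G]$ (via the identity and via $\phi$). Since $K[F]$ has global dimension $1$, $\Tor_n^{K[F]}(\D, M) = 0$ for $n \ge 2$, so the sequence degenerates to an injection of $\Tor_2^{K[G]}(\D, M)$ into $\ker \psi_1$.

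The hard part is showing that $\psi_1$ is injective for every $M$. My plan is to exploit the Linnell property of $\D = \D_{K[G]}$ guaranteed by \cref{grater}, which provides the embedding $\D_{K[F]} \otimes_{K[F]} K[G] \hookrightarrow \D$ coming from $(F, G)$-freeness; combined with a direct Malcev-Neumann-series analysis with respect to a Conradian order on $G$ extending a chosen one on $F$, this should force $\psi_1$ to have trivial kernel. Once injectivity is in hand, $\Tor_2^{K[G]}(\D, M) = 0$ for every $M$, and \cref{crit} delivers the coherence of $K[G]$.
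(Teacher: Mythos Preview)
Your overall shape is right—embed $K[G]$ in $\D_{K[G]}$, verify global dimension $\le 2$, verify weak dimension $\le 1$, apply \cref{crit}—but there are two genuine gaps, and the paper avoids both by a different (and simpler) route.

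\textbf{Existence of $\D_{K[G]}$ in arbitrary characteristic.} The statement is for any field $K$, and you acknowledge that for positive characteristic ``existence must be handled by a separate argument'' without supplying one. The paper does not sidestep this: it \emph{constructs} $\D_{K[G]}$ directly. Since $N=\bigcup_{n}t^{-n}Ft^{n}$ is locally free, $\D_{K[N]}$ exists for every field (this is classical; see the reference to \cite{Ja21}). The conjugation automorphism $\tau$ of $K[N]$ extends uniquely to $\D_{K[N]}$ by Hughes uniqueness, and then $\D_{K[G]}$ is the Ore ring of fractions of the twisted Laurent ring $\D_{K[N]}[t^{\pm 1},\tau]$. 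This works uniformly in all characteristics.

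\textbf{Injectivity of $\psi_1$.} Your Mayer--Vietoris computation is correct up to the identification $\Tor_2^{K[G]}(\D,M)\cong\ker\psi_1$, but the proposed justification that $\psi_1$ is injective (``Linnell property plus a Malcev--Neumann series analysis'') is not an argument, it is a hope. The map $\psi_1$ has the form $1-\tau_*$ on $\Tor_1^{K[F]}(\D,M)$, and nothing you have written explains why this has trivial kernel for an arbitrary $M$. The $(F,G)$-freeness you invoke controls $\D$ as a $K[F]$-module, not the induced map on $\Tor_1$ against a general coefficient module.

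The paper's approach bypasses the Mayer--Vietoris computation entirely by working over $N$ rather than $F$. Because $N$ is \emph{normal} with $G/N\cong\Z$, Shapiro's lemma gives
\[
\Tor_2^{K[G]}\bigl(\D_{K[N]}[t^{\pm 1},\tau],\,M\bigr)\;\cong\;\Tor_2^{K[N]}(\D_{K[N]},M)=0,
\]
the last vanishing holding because $N$ is locally free. Since $\D_{K[G]}$ is an Ore localisation of $\D_{K[N]}[t^{\pm 1},\tau]$, it is flat over that ring, whence $\Tor_2^{K[G]}(\D_{K[G]},M)=0$ as well. The moral: use the normal locally free subgroup $N$, not the vertex group $F$; then a single application of Shapiro plus flatness of Ore localisation replaces the delicate $\psi_1$ analysis and simultaneously furnishes $\D_{K[G]}$ in every characteristic.
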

\begin{proof}
Let us briefly explain the construction of $\D_{K[G]}$. For details  see, for example, \cite{Ja21}. 

The group $G$ has a locally free normal subgroup $N$ such that $G/N\cong \Z$. Let $t\in G$ be such that $G/N$ is generated by $tN$ and let $\tau:K[N]\to K[N]$ be the automorphism induced by the conjugation by $t$: $\tau\left (a\right ) =t at^{-1}$. Then $K[G]$ is naturally isomorphic to the ring of twisted Laurent polynomials $K[N][t^{\pm 1}, \tau]$.  Since $N$ is locally free, there exists $\D_{K[N]}$ (see \cite[Theorems 1.1 \& 3.7]{Ja21}). 
  Since $\D_{K[N]}$ is unique, we can   extend $\tau$ to an automorphism  $\D_{K[N]}\to \D_{K[N]}$, which we will also call $\tau$. Then $\D_{K[G]}$ is isomorphic to the classical Ore ring of fractions of $\D_{K[N]}[t^{\pm 1},\tau]$. 

Since $N$ is locally free ,   $\Tor_2^{K[N]}\left ( \D_{K[N]}, M\right ) =0$ for an arbitrary finitely presented  left $K[N]$-module $M$. However, $\Tor_2^{K[N]}\left ( \D_{K[N]}, -\right )$ commutes with direct limits. Therefore, $\Tor_2^{K[N]}\left ( \D_{K[N]}, M\right ) =0$ for an arbitrary left $K[N]$-module $M$.

Let $M$ be a left $K[G]$-module. By Shapiro's lemma we obtain
$$\Tor_2^{K[G]}\left ( \D_{K[N]}[t^{\pm 1},\tau], M\right ) =\Tor_2^{K[N]}\left ( \D_{K[N]}, M\right ) =0.$$
Thus, $\Tor_2^{K[G]}\left ( \D_{K[G]}, M\right ) =0$ as well. Hence the right $K[G]$-module $\D_{K[G]}$ is of weak dimension at most 1.

Since $G$ is a HNN-extension of a free group, it is of cohomological dimension at most 2. Hence, by Proposition \ref{gldim}, $K[G]$ is of global dimension at most 2. Therefore, $K[G]$ is coherent by Corollary \ref{crit}.
\end{proof}

Proposition \ref{fp} implies also the following criterion for a group to be of type $\fp_2(k)$.

\begin{cor}
\label{finite}
Let $G$ be a finitely generated group of cohomological dimension two, let $k$ be a commutative ring and suppose that $k[G]$ can be embedded in a division ring $\D$. Then $G$ is of type $\fp_2(k)$ if and only if $\dim_{\D}\Tor_2^{k[G]}(\D, k)$ is finite.
\end{cor}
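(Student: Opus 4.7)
The plan is to apply Proposition \ref{fp} to the augmentation ideal $I = I_{k[G]}$, and then use dimension shifting to convert $\Tor_1$ of $I$ into $\Tor_2$ of $k$. First, I would construct a partial projective resolution
\[
0 \to N \to P_1 \to P_0 \to k \to 0
\]
of $k$ over $k[G]$, with $P_0 = k[G]$, $P_1$ finitely generated free, and $N$ projective. To do this, I start from a partial projective resolution $0 \to Q_2 \to Q_1 \to Q_0 \to \Z \to 0$ of $\Z$ over $\Z[G]$ given by $\mathrm{cd}(G) \leqslant 2$, arranging $Q_0, Q_1$ to be finitely generated free using that $G$ is finitely generated. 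Since $\Z[G]$ is free over $\Z$, each $Q_i$ is free over $\Z$, so base-changing over $\Z$ to $k$ preserves exactness; the resulting $k[G]$-modules $P_i := Q_i \otimes_\Z k$ are $Q_i \otimes_{\Z[G]} k[G]$, hence projective over $k[G]$.

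Next, I would split this four-term sequence into $0 \to I \to P_0 \to k \to 0$ and $0 \to N \to P_1 \to I \to 0$. The finite generation of $G$ ensures that $I$ is a finitely generated $k[G]$-module, while the second sequence shows that $I$ has projective dimension at most one. A standard application of Schanuel's lemma gives that $G$ is of type $\fp_2(k)$ if and only if $N$ is finitely generated, which is equivalent to $I$ being finitely presented as a $k[G]$-module.

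Applying Proposition \ref{fp} to $M = I$, I conclude that $I$ is finitely presented if and only if $\dim_\D \Tor_1^{k[G]}(\D, I)$ is finite. The long exact sequence of $\Tor$ associated to $0 \to I \to P_0 \to k \to 0$, combined with $\Tor_i^{k[G]}(\D, P_0) = 0$ for $i \geqslant 1$, yields a natural isomorphism
\[
\Tor_1^{k[G]}(\D, I) \cong \Tor_2^{k[G]}(\D, k),
\]
which gives the desired equivalence.

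There is no serious obstacle here; the only technical point is verifying that base-changing a $\Z[G]$-projective resolution of $\Z$ produces a $k[G]$-projective resolution of $k$ with the first two terms finitely generated, which reduces to the observation that projective $\Z[G]$-modules are free over $\Z$ (since $\Z$ is a PID, or alternatively because $\Z[G]$ is $\Z$-free and projective $\Z[G]$-modules are thus $\Z$-flat).
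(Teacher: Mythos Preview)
Your proposal is correct and follows essentially the same route as the paper: apply Proposition~\ref{fp} to the augmentation ideal $I_{k[G]}$, which is finitely generated (since $G$ is) and of projective dimension at most one, and then use the dimension shift $\Tor_1^{k[G]}(\D,I_{k[G]})\cong\Tor_2^{k[G]}(\D,k)$. The paper's proof simply asserts that $I_{k[G]}$ has projective dimension at most one; your base-change argument from a $\Z[G]$-resolution of $\Z$ makes this step explicit for an arbitrary commutative ring $k$ (the paper's Proposition~\ref{gldim} is stated only for fields), so your write-up is in fact slightly more careful on that point.
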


\begin{proof}
  Since $G$ has cohomological dimension at most two, the left $k G$-module $I_{k[ G]}$ has projective dimension at most one. Let $0\to P_1\to P_0\to I_{k[G]}\to 0$ be a projective resolution. By definition of $\Tor$, we have the exact sequence
\[
\Tor^{k[ G]}_1(\D, I_{k[ G]}) \to \D\otimes_{k[ G]} P_1\to \D\otimes_{k[ G]}P_0.
\]
Since $P_0$ is finitely generated, we see that $P_1$ is finitely generated if and only if
\[
\dim_{\D}\Tor_1^{k[G]}(\D, I_{k[ G]}) = \dim_{\D}\Tor_2^{k[G]}(\D, k) 
\]
is finite by \cref{fp}. This completes the proof.
\end{proof}

Corollary \ref{finite} coupled with \cref{l2betti} yields a proof of \cref{npi}(1) below.

\begin{teo}
If $X$ is a two-complex with non-positive immersions, then $\pi_1(X)$ is homologically coherent.
\end{teo}
\begin{proof} 
By \cref{npi_indicable}, $\pi_1(X)$ is locally indicable. Let $H$ be a finitely generated subgroup of $G$. By \cite{JL20}, $\Z[H]$ is embedded in $\D_{\Q[H]}$. By 
Proposition \ref{l2betti},  
$$b_2^{(2)}(H)=\dim_{\D_{\Q[H]}}\Tor_2^{\Q[H]}(\D_{\Q[H]}, \Q)$$ is finite. Hence, by Corollary \ref{finite}, $H$ is of type $\fp_2(\Z)$.
\end{proof}

The following proposition was communicated to us by Sam Fisher.

\begin{pro}\label{vanishing2nd}
Let $G$ be a locally indicable group of cohomological dimension $n$. Let $K$ be a field and assume that $\D_{K[G]}$ exists. If  $\Tor_n^{K[G]}(\D_{K[G]},K)=0$, then   for any subgroup $H$ of $G$,  $\Tor_n^{K[H]}(\D_{K[H]},K)=0$.
\end{pro}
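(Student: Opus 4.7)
The plan is to relate $\Tor_2^{K[H]}(\D_{K[H]}, K)$ to $\Tor_2^{K[G]}(\D_{K[G]}, K)$ by (i) inducing up to $K[G]$ via Shapiro's lemma, and (ii) exploiting that the Linnell property of $\D_{K[G]}$ forces $\D_{K[H]}\otimes_{K[H]} K[G]$ to sit inside $\D_{K[G]}$ as a right $K[G]$-submodule.

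First I would verify existence of $\D_{K[H]}$ and its embedding in $\D_{K[G]}$. Since $H$ is a subgroup of $G$, it is locally indicable and of cohomological dimension at most two. By Gr\"ater's theorem (\cref{grater}), the assumed existence of $\D_{K[G]}$ together with local indicability of $G$ forces $\D_{K[G]}$ to be the (unique) Linnell division $K[G]$-ring, hence $(N,G)$-free for every subgroup $N\le G$. Restricting to subgroups of $H$ gives $(N,H)$-freeness for every $N\le H$, so the division closure $E_H$ of $K[H]$ inside $\D_{K[G]}$ is an epic, Hughes-free division $K[H]$-ring. By the uniqueness statement of \cref{hughes}, $\D_{K[H]}$ exists and is $K[H]$-isomorphic to $E_H\subseteq \D_{K[G]}$; moreover, taking $N=H$ in the Linnell property yields the injection of right $K[G]$-modules
\[
D := \D_{K[H]}\otimes_{K[H]} K[G]\hookrightarrow \D_{K[G]}.
\]

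Let $Q$ denote the cokernel, giving the short exact sequence $0\to D\to \D_{K[G]}\to Q\to 0$ of right $K[G]$-modules. The corresponding long exact sequence for $\Tor^{K[G]}_*(-,K)$ in degree two reads
\[
\Tor_3^{K[G]}(Q,K)\to \Tor_2^{K[G]}(D,K)\to \Tor_2^{K[G]}(\D_{K[G]},K).
\]
By \cref{gldim}, $K[G]$ has global dimension at most $2$, so the left-hand term vanishes; by hypothesis, the right-hand term vanishes. Hence $\Tor_2^{K[G]}(D,K)=0$.

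Finally, Shapiro's lemma (applicable because $K[G]$ is free, in particular flat, as a right $K[H]$-module) gives
\[
\Tor_2^{K[H]}(\D_{K[H]},K)\;\cong\;\Tor_2^{K[G]}(\D_{K[H]}\otimes_{K[H]} K[G],K)\;=\;\Tor_2^{K[G]}(D,K)\;=\;0,
\]
as required. The conceptual heart of the argument is the first step: the Linnell property packaged in Gr\"ater's theorem is what simultaneously delivers the existence of $\D_{K[H]}$ and the crucial embedding $D\hookrightarrow \D_{K[G]}$. Once these are in hand, the remainder is a formal consequence of the global dimension bound together with Shapiro's lemma, so I would expect the only real verification work to be in cleanly stating the Hughes-free / Linnell bookkeeping in the first paragraph.
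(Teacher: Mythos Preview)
Your proof is correct and follows essentially the same route as the paper's own argument: invoke \cref{grater} to obtain the embedding $\D_{K[H]}\otimes_{K[H]}K[G]\hookrightarrow \D_{K[G]}$, use the long exact sequence together with the global dimension bound from \cref{gldim} to kill $\Tor_2$ of the induced module, and conclude via Shapiro's lemma. Your write-up is in fact slightly more careful than the paper's in spelling out why $\D_{K[H]}$ exists and is identified with the division closure of $K[H]$ inside $\D_{K[G]}$.
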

\begin{proof}
 Observe that, by Shapiro's lemma,
$$   \Tor_n^{K[H]}(\D_{K[H]}, K)
\cong  \Tor_n^{K[G]}(\D_{K[H]}\otimes_{K[H]} K[G], K).$$
By  Theorem  \ref{grater}, the right $K[G]$-module $M=\D_{K[H]}\otimes_{K[H]} K[G]$ can be seen as a submodule of $\D_{K[G]}$. Consider the exact sequence
$$0\to M\to \D_{K[G]}\to  \D_{K[G]}/M\to 0,$$
which induces the exact sequence
$$\Tor_{n+1}^{K[G]}(\D_{K[G]}/M, K)\to \Tor_n^{K[G]}(M, K)\to \Tor_n^{K[G]}(\D_{K[G]}, K).$$
Since  $\Tor_n^{K[G]}(\D_{K[G]},K)=0$ and, by Proposition \ref{gldim}, $K[G]$ is of global dimension at most $n$, we conclude that $\Tor_n^{K[G]}(M, K)=0$. Therefore,  $\Tor_n^{K[H]}(\D_{K[H]},K)=0$.\end{proof}

\begin{cor}\label{vanishing2ndL2}
Let $G$ be a locally indicable group of cohomological dimension $n$ with $b_n^{(2)}(G)=0$. Then for any subgroup $H$ of $G$, $b_n^{(2)}(H)=0$.
\end{cor}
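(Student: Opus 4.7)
The plan is to deduce this as a direct corollary of Proposition \ref{vanishing2nd}, applied with $K=\Q$. The key translation is the algebraic formula for $L^2$-Betti numbers of locally indicable groups, recalled in the discussion of Linnell rings just before Theorem \ref{grater}:
\[
b_k^{(2)}(G)=\dim_{\D_{\Q[G]}}\Tor_k^{\Q[G]}(\D_{\Q[G]},\Q).
\]
This formula is available whenever $\D_{\Q[G]}$ exists, which, by the main result of \cite{JL20} cited in Subsection \ref{dkg}, is automatic for locally indicable $G$ (since $\Q$ has characteristic zero).

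First, I would verify the hypotheses of Proposition \ref{vanishing2nd} for $G$ with $K=\Q$: $G$ is locally indicable and has cohomological dimension two by assumption, and $\D_{\Q[G]}$ exists by the above. The assumption $b_2^{(2)}(G)=0$ then translates, via the displayed formula, into $\Tor_2^{\Q[G]}(\D_{\Q[G]},\Q)=0$.

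Next, given an arbitrary subgroup $H\le G$, Proposition \ref{vanishing2nd} yields $\Tor_2^{\Q[H]}(\D_{\Q[H]},\Q)=0$. To convert this back into a statement about $b_2^{(2)}(H)$, I note that $H$ is itself locally indicable (being a subgroup of the locally indicable group $G$), so $\D_{\Q[H]}$ exists by \cite{JL20} and the algebraic formula applies to $H$ as well. Thus $b_2^{(2)}(H)=\dim_{\D_{\Q[H]}}\Tor_2^{\Q[H]}(\D_{\Q[H]},\Q)=0$, as required.

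There is no serious obstacle; the work has been done in Proposition \ref{vanishing2nd}, and the only thing to check is that its hypotheses are met and that its conclusion can be reinterpreted in terms of $L^2$-Betti numbers. The mild point worth flagging is that Proposition \ref{vanishing2nd} is about $\Tor$-vanishing rather than $L^2$-Betti numbers, so one must invoke both the existence of $\D_{\Q[H]}$ for the subgroup and the algebraic characterisation of $b_2^{(2)}$, neither of which requires any further hypothesis on $H$ beyond local indicability.
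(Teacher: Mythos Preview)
Your proposal is correct and matches the paper's intended argument: the corollary is stated without an explicit proof precisely because it follows immediately from Proposition~\ref{vanishing2nd} with $K=\Q$, using the algebraic formula $b_2^{(2)}(G)=\dim_{\D_{\Q[G]}}\Tor_2^{\Q[G]}(\D_{\Q[G]},\Q)$ and the existence of $\D_{\Q[G]}$ for locally indicable $G$ from \cite{JL20}. Your additional remarks that $H$ inherits local indicability and hence that $\D_{\Q[H]}$ exists are exactly the routine verifications the paper leaves implicit.
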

The conclusion of the corollary remains valid without the assumption that $G$ is locally indicable. Although in this paper we only use this result for locally indicable groups, we have decided to sketch the proof of the general case for the convenience of the reader.

\begin{teo}\label{vanishing}
Let $G$ be a group of cohomological dimension $n$ over $\Q$ with $b_n^{(2)}(G) = 0$. Then, for any subgroup $H$ of $G$, we have $b_n^{(2)}(H) = 0$.
\end{teo}

\begin{proof}
Let $\mathcal{U}(G)$ be the ring of affiliated operators of $G$ (see \cite[Section 3]{Ka19}). The $L^2$-Betti number $b_i^{(2)}(G)$ is defined as $\dim_{\mathcal{U}(G)} \Tor_i^{\CC[G]}(\mathcal{U}(G), \CC)$. Since $G$ is of cohomological dimension $n$ over $\Q$, $ \Tor_n^{\CC[G]}(\mathcal{U}(G), \CC)$ is a submodule of a free $\mathcal U(G)$-module. Thus, $\dim_{\mathcal{U}(G)} \Tor_n^{\CC[G]}(\mathcal{U}(G), \CC) = 0$ if and only  $\Tor_n^{\CC[G]}(\mathcal{U}(G), \CC) = 0$. 

The multiplicative map 
$
\mathcal{U}(H) \otimes_{\CC[H]} \CC[G] \to \mathcal{U}(G)
$
is injective. Thus, as in the proof of Proposition \ref{vanishing2nd}, we obtain that 
\[
\Tor_n^{\CC[H]}(\mathcal{U}(H), \CC) \cong \Tor_n^{\CC[G]}(\mathcal{U}(H) \otimes_{\CC[H]} \CC[G], \CC) = \{0\}.
\]
This implies that $b_n^{(2)}(H) = 0$.
\end{proof}

A variation of the previous result can also be found in \cite{GN21}.

Combining Corollary \ref{vanishing2ndL2}  with Corollary \ref{finite},   we obtain  Theorem \ref{npi}(2).

\begin{teo}
If $G$ is a locally indicable group of cohomological dimension two and with $b_2^{(2)}(G) = 0$, then $G$ is homologically coherent.
\end{teo}

Since the only property used in \cref{vanishing2ndL2} was that \( k[G] \) was embeddable into a Linnell division ring, our argument can, in fact, be applied to groups of cohomological dimension two satisfying the strong Atiyah conjecture and having a trivial second \( L^2 \)-Betti number. However, it turns out that these groups are locally indicable.

\begin{teo}
\label{Atiyah_locally}
Let \( G \) be a torsion-free group of cohomological dimension two over \( \mathbb{Q} \) satisfying the strong Atiyah conjecture. If \( b_2^{(2)}(G) = 0 \), then \( G \) is locally indicable.
\end{teo}

\begin{proof}
Let \( H \) be a finitely generated non-trivial subgroup of \( G \). Consider the exact sequence 
\[
0 \to P_1 \to P_2 \to I_{\mathbb{Q}[H]}\to 0,
\]
where \( P_2 \) is a finitely generated projective \( \mathbb{Q}[H] \)-module. Since \( H \) is of cohomological dimension two over \( \mathbb{Q} \), \( P_1 \) is also projective. 

Applying the proof of Corollary \ref{vanishing2ndL2} or directly Theorem \ref{vanishing}, we obtain that \( b_2^{(2)}(H) = 0 \). Hence, by Proposition  \ref{fp}, \( P_1 \) is finitely generated. By \cite[Theorem 8]{Ec01}, for \( i = 1,2 \),
\[
\dim_{\mathbb{Q}}\left(\mathbb{Q} \otimes_{\mathbb{Q}[H]} P_i\right) = \dim_{\mathcal{D}_{\mathbb{Q}[H]}}\left(\mathcal{D}_{\mathbb{Q}[H]} \otimes_{\mathbb{Q}[H]} P_i\right).
\]
Therefore,
\[
b_1(H) = 1 + b_2(H) + b_1^{(2)}(H) - b_2^{(2)}(H) = 1 + b_2(H) + b_1^{(2)}(H) > 0.
\]
Hence \( H \) is indicable, and so \( G \) is locally indicable.
\end{proof}

\section{From homological coherence to coherence}
\label{sect:homcoh}
The reader is referred to Serre's monograph \cite{serre_80} for the necessary background in Bass--Serre theory. We will always assume that our trees and group actions are simplicial. If $G$ is a group acting on a tree $\mathcal{T}$, then we call $\mathcal{T}$ a \textbf{$G$-tree}. If $H\subseteq G$, we denote by $\mathcal{T}/H$ the space obtained from $\mathcal{T}$ by identifying each point $t\in \mathcal{T}$ with $gt$ for each $g\in H$. A $G$-tree $\mathcal{T}$ is \textbf{cocompact} if $\mathcal{T}/G$ is compact. A map between trees $\mathcal{S}\to \mathcal{T}$ is a \textbf{morphism} if it sends vertices to vertices and edges to (possibly trivial) edge paths. A $G$-tree $\mathcal{T}$ is \textbf{dominated} by another $G$-tree $\mathcal{S}$ if there is a $G$-equivariant morphism $\mathcal{S}\to \mathcal{T}$. We call such a morphism a \textbf{domination map}. A \textbf{refinement} of $G$-trees $\mathcal{S}\to \mathcal{T}$ is a domination map that is obtained by collapsing certain edges to vertices.

\begin{lem}
\label{quotient_tree}
Let $G$ be a group and let $\mathcal{S}$ be a $G$-tree. If $N_1, \dots, N_k\le  G$ are subgroups that stabilise vertices of $\mathcal{S}$, then $\mathcal{T} = \mathcal{S}/\normal{N_1, \dots, N_k}$ is a $G/\normal{N_1, \dots, N_k}$-tree.
\end{lem}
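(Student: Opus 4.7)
The plan is to apply Bass--Serre theory. Set $N := \normal{N_1, \dots, N_k}$. The $G/N$-action on $\mathcal{T} := \mathcal{S}/N$ will be immediate once we know $\mathcal{T}$ is a tree: the $G$-action on $\mathcal{S}$ descends to $\mathcal{T}$ because $N \unlhd G$, and $N$ acts trivially on $\mathcal{T}$ by construction, so the action factors through $G/N$. Hence the entire content of the lemma is the assertion that $\mathcal{T}$ is a tree.

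First I would record that $N$ is generated by elements each of which fixes a vertex of $\mathcal{S}$. As a normal closure, $N$ is generated by the conjugates $gN_ig^{-1}$ with $g\in G$ and $1\leqslant i\leqslant k$. If $v_i$ is a vertex of $\mathcal{S}$ fixed by $N_i$, then $gN_ig^{-1}$ fixes the vertex $g\cdot v_i$. Thus, in this generating set, every generator of $N$ is an elliptic element of the $N$-action on $\mathcal{S}$.

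The key step is then the following standard consequence of Bass--Serre theory: for any group $H$ acting on a tree $\mathcal{S}$, the natural homomorphism $H \to \pi_1(\mathcal{S}/H)$ arising from the graph-of-groups decomposition of $H$ with underlying graph $\mathcal{S}/H$ is surjective, and its kernel equals the subgroup of $H$ generated by the vertex stabilisers of the action (the collection of vertex stabilisers is already closed under conjugation, so the subgroup they generate is normal). Applied with $H=N$, the previous paragraph shows this kernel is all of $N$, whence $\pi_1(\mathcal{S}/N)=1$. Since $\mathcal{S}/N$ is connected (as $\mathcal{S}$ is), it is a tree.

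I do not expect any serious obstacle: the only non-trivial ingredient is the Bass--Serre fact just quoted, which is standard and is presumably what underlies Button's simplification mentioned in the acknowledgments. If one preferred to avoid invoking the graph-of-groups machinery, the same conclusion can be derived directly by lifting a loop in $\mathcal{S}/N$ based at the image of $v$ to a path in $\mathcal{S}$ from $v$ to $nv$, writing $n=n_1\cdots n_r$ with each $n_j$ fixing some vertex $w_j$, and inductively null-homotoping the loop in $\mathcal{S}/N$ by inserting, for each $j$, the geodesic in $\mathcal{S}$ to $w_j$ and back.
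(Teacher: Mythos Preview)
Your proof is correct and is essentially the same as the paper's. Both arguments set $N=\normal{N_1,\dots,N_k}$, observe that $N$ is generated by elliptic elements (since each conjugate $gN_ig^{-1}$ fixes $g\cdot v_i$), and invoke the standard Bass--Serre fact that $\pi_1(\mathcal{S}/N)$ is the quotient of $N$ by the (normal) subgroup generated by its elliptic elements; the paper cites Serre \cite[Section 5]{serre_80} for this, while you spell out the same statement and add a sketch of a direct loop-lifting argument as an alternative.
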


\begin{proof}
Let  $N = \normal{N_1, \dots, N_k}$ and consider its action on $\mathcal{S}$. The quotient of $N$ by the normal closure of its elliptic elements $K\trianglelefteqslant N$ is the fundamental group of $\mathcal{S}/N = \mathcal{T}$ (see \cite[Section 5]{serre_80}). Since $N$ is generated by elliptic elements, it follows that $K = N$ and so $\mathcal{T}$ is a $G/N$-tree.
\end{proof}

The following result is \cite[Proposition 2.17]{guirardel_17} due to Guirardel--Levitt, which generalises results of Dunwoody \cite{dunwoody_85}, Baumslag--Shalen \cite[Theorem 1]{baumslag_90} and Fujiwara--Papasoglu \cite[Proposition 5.12]{fujiwara_06}.

\begin{pro}
\label{finitely_presented}
Let $G$ be a group and let 
\[
\mathcal{T}_1\gets\ldots \gets\mathcal{T}_k\gets \mathcal{T}_{k+1}\gets\ldots
\]
be a sequence of refinements of cocompact $G$-trees. If $G$ is finitely presented, then there is a cocompact $G$-tree $\mathcal{S}$ with finitely generated vertex and edge stabilisers that dominates $\mathcal{T}_i$ for all $i$.
\end{pro}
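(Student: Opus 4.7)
My plan is to exploit the finite presentability of $G$ through its action on a compact presentation $2$-complex, and to produce $G$-equivariant maps into each $\mathcal{T}_i$ that are compatible with the refinement maps $\pi_i\colon \mathcal{T}_{i+1}\to \mathcal{T}_i$. The desired tree $\mathcal{S}$ will be extracted once the combinatorial complexity of the images is shown to stabilise, which is the crux of the argument.

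First, fix a finite presentation $2$-complex $K$ for $G$ and let $\tilde{K}$ denote its universal cover, which carries a free cocompact $G$-action. I would construct inductively $G$-equivariant morphisms $f_i\colon \tilde{K}\to \mathcal{T}_i$ with the property that $\pi_i\circ f_{i+1}= f_i$. On $\tilde{K}^{(0)}$ this is possible because each vertex of $\mathcal{T}_i$ lifts to the (non-empty) subtree of $\mathcal{T}_{i+1}$ that collapses to it; I would then extend to edges by choosing geodesic edge-paths, and to $2$-cells by using that the boundary of each $2$-cell maps to a null-homotopic loop in the simply connected tree $\mathcal{T}_{i+1}$, which therefore bounds a compact subtree.

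Next, let $\mathcal{S}_i\subseteq\mathcal{T}_i$ denote the smallest $G$-invariant subtree containing $f_i(\tilde{K})$. Since $\tilde{K}$ has only finitely many $G$-orbits of $2$-cells and each of their images is a compact subtree of $\mathcal{T}_i$, the quotient $\mathcal{S}_i/G$ is a finite graph, so $\mathcal{S}_i$ is a cocompact $G$-tree. The $\pi_i$ restrict to refinements $\mathcal{S}_{i+1}\to \mathcal{S}_i$ by construction, and any such cocompact $\mathcal{S}_i$ dominates $\mathcal{T}_i$ (it is a subtree onto which $\mathcal{T}_i$ already equivariantly retracts once one includes edge orbits outside the image via separate edges).

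The main obstacle is a Dunwoody-style accessibility argument showing that the number of $G$-orbits of edges in $\mathcal{S}_i$ is bounded uniformly in $i$. The idea is to reinterpret each refinement $\mathcal{S}_{i+1}\to \mathcal{S}_i$ via the family of \emph{tracks} on the compact $2$-complex $\tilde{K}/G$ obtained as the image of the preimages of midpoints of the new edges. Since $\tilde{K}/G$ is finite, Dunwoody's theorem bounds the number of parallelism classes of pairwise disjoint essential tracks by a constant depending only on the complexity of $\tilde{K}/G$; hence any sufficiently long sequence of refinements must eventually introduce only tracks parallel to existing ones, which do not genuinely subdivide edges of $\mathcal{S}_i$. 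Once the system stabilises at some $\mathcal{S}_N$, this tree dominates every $\mathcal{T}_i$: for $i\leqslant N$ through the composition $\mathcal{S}_N\hookrightarrow\mathcal{T}_N\to\mathcal{T}_i$, and for $i>N$ via the stabilisation together with the $f_i$. Finite generation of vertex and edge stabilisers of $\mathcal{S}$ then follows from cocompactness and finite presentability of $G$ by the standard Bass--Serre argument that decomposes $G$ as the fundamental group of a finite graph of finitely generated groups.
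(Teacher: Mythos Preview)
The paper does not give its own proof of this proposition; it simply quotes it as \cite[Proposition~2.17]{guirardel_17}, noting that it generalises earlier results of Dunwoody, Baumslag--Shalen, and Fujiwara--Papasoglu. Your outline is the standard Dunwoody ``tracks on a presentation $2$-complex'' argument that underlies those references, so in spirit you are reproducing the intended proof.

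Your final sentence, however, is a genuine error. Finite generation of the vertex and edge stabilisers of $\mathcal{S}$ does \emph{not} follow from ``cocompactness and finite presentability of $G$ by the standard Bass--Serre argument''. A finitely presented group can act cocompactly on a tree with infinitely generated stabilisers: for instance $F_2=\langle a,b\rangle$ acts on the real line through the epimorphism $a\mapsto 1$, $b\mapsto 0$, and every vertex stabiliser is the infinitely generated normal closure of $b$. Bass--Serre theory only identifies the vertex groups of the quotient graph of groups with the stabilisers; it supplies no generators for them. What actually yields finite generation in your construction is the track argument itself: the preimages under $f_N$ of the edge midpoints of $\mathcal{S}=\mathcal{S}_N$ descend to a finite track pattern on the compact complex $K$, and each vertex stabiliser of $\mathcal{S}$ is the image of the fundamental group of a complementary region of $K$, which is a finite subcomplex and hence has finitely generated $\pi_1$. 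This is the substance of Dunwoody's resolution theorem and cannot be replaced by a generic appeal to Bass--Serre. (Two smaller points: your parenthetical explaining why $\mathcal{S}_i$ dominates $\mathcal{T}_i$ is confused and unnecessary, since the inclusion $\mathcal{S}_i\hookrightarrow\mathcal{T}_i$ is already a $G$-equivariant morphism, which is all domination requires; and the stabilisation step is looser than you suggest, since parallel tracks can still give distinct edge orbits --- the cleaner conclusion is that the tree dual to a maximal track pattern on $K$ dominates every $\mathcal{S}_i$, rather than that the $\mathcal{S}_i$ literally become equal.)
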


Before proceeding, we first need the following useful lemma due to Bieri--Strebel \cite[Lemma 2.1]{bieri_78}. If $G$ is a group, then $G_{\text{ab}}$ denotes its abelianisation.

\begin{lem}
\label{bieri_lemma}
Let $k $ be a commutative ring with $1\neq 0$ and let $G$ be a group. Then $G$ is of type $\fp_2(k )$ if and only if there is a short exact sequence
\[\begin{tikzcd}
1 \arrow[r] & N \arrow[r] & H \arrow[r] & G \arrow[r] & 1
\end{tikzcd}\]
where $H$ is finitely presented and $N_{\text{ab}}\otimes_{\Z}k =0$.
\end{lem}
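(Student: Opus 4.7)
The plan is to reduce both directions to the standard characterization of $\fp_2(k)$: a group $G$ is of type $\fp_2(k)$ if and only if $G$ is finitely generated and, for some (equivalently, any) short exact sequence $1\to R\to F\to G\to 1$ with $F$ finitely generated free, the relation module $R_{\text{ab}}\otimes_{\Z}k$ is finitely generated as a $k[G]$-module. This reformulation will be the bridge in both directions.

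For the $(\Rightarrow)$ direction, take such a presentation $1\to R\to F\to G\to 1$ of $G$. Since $R_{\text{ab}}\otimes_{\Z}k$ is finitely generated over $k[G]$, I can choose elements $r_1,\dots,r_n\in R$ whose images generate it. Let $R_0$ be the normal closure in $F$ of $\{r_1,\dots,r_n\}$, and set $H=F/R_0$ and $N=R/R_0$, so $H$ is finitely presented and $1\to N\to H\to G\to 1$ is exact. Normal closure in $F$ corresponds to $\Z[G]$-span after passing to $R_{\text{ab}}=R/[R,R]$, so the image of $R_0$ in $R_{\text{ab}}$ is precisely the $\Z[G]$-submodule generated by the $r_i$. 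The right-exact sequence
\[
\bigl(R_0[R,R]/[R,R]\bigr)\otimes_{\Z}k \longrightarrow R_{\text{ab}}\otimes_{\Z}k \longrightarrow N_{\text{ab}}\otimes_{\Z}k \longrightarrow 0
\]
then forces $N_{\text{ab}}\otimes_{\Z}k=0$ by the choice of the $r_i$.

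For the $(\Leftarrow)$ direction, assume $1\to N\to H\to G\to 1$ with $H$ finitely presented and $N_{\text{ab}}\otimes_{\Z}k=0$. Fix a finite presentation $1\to R\to F\to H\to 1$ and let $S\le F$ be the preimage of $N$, so that $R\le S$, $R$ is normal in $S$, and $S/R\cong N$. The five-term exact sequence in group homology for $1\to R\to S\to N\to 1$ yields an exact sequence
\[
(R_{\text{ab}})_N \longrightarrow S_{\text{ab}} \longrightarrow N_{\text{ab}} \longrightarrow 0,
\]
where the coinvariants $(R_{\text{ab}})_N$ naturally carry the $\Z[H/N]=\Z[G]$-module structure induced from the $\Z[H]$-module $R_{\text{ab}}$. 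Applying $\_\otimes_{\Z}k$ (right exact) and using $N_{\text{ab}}\otimes_{\Z}k=0$, I conclude that $S_{\text{ab}}\otimes_{\Z}k$ is a quotient of $(R_{\text{ab}})_N\otimes_{\Z}k$. Since $H$ is finitely presented, $R$ is finitely normally generated in $F$, so $R_{\text{ab}}$ is finitely generated as a $\Z[H]$-module, whence $(R_{\text{ab}})_N\otimes_{\Z}k$ is finitely generated over $k[G]$. This gives the required finite generation of $S_{\text{ab}}\otimes_{\Z}k$, i.e., $G$ is of type $\fp_2(k)$.

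The proof is essentially formal bookkeeping; I do not expect a substantive obstacle. The one delicate point is keeping careful track of which of $F$, $H$, $N$, $G$ acts on each module and verifying that the $\Z[G]$-structures on the relation modules and their coinvariants are compatible with the conjugation actions induced from $F$; this is where one must be sure that "normal closure" and "$\Z[G]$-span" correspond correctly under abelianisation.
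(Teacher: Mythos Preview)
Your proof is correct. The paper does not give its own proof of this lemma; it simply cites Bieri--Strebel \cite[Lemma 2.1]{bieri_78}. Your argument is essentially the standard one (and close to the original): reduce to the relation-module criterion for $\fp_2(k)$, and in each direction compare the relation modules via the obvious right-exact sequences. The bookkeeping you flag---that normal closure in $F$ corresponds to $\Z[G]$-span in $R_{\mathrm{ab}}$, and that the map $(R_{\mathrm{ab}})_N\to S_{\mathrm{ab}}$ is $G$-equivariant---is handled correctly.
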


We now generalise \cref{finitely_presented} to groups of type $\fp_2(k )$ for an arbitrary commutative ring $k $ with $1$. The proof is essentially an extension of the proof of \cite[Theorem A]{bieri_78}. When $k  = \Z/2\Z$ and  $\mathcal{T}_i = \mathcal{T}$ for all $i$, the following is due to Dicks--Dunwoody \cite[Theorem 4.4]{dicks_89}.

\begin{teo}
\label{fp2_action}
Let $k $ be a commutative ring with $1\neq 0$, let $G$ be a group and let 
\[
\mathcal{T}_1\gets \ldots \gets\mathcal{T}_k\gets \mathcal{T}_{k+1}\gets\ldots
\]
be a sequence of refinements of cocompact $G$-trees. If $G$ is of type $\fp_2(k )$, then there is a cocompact $G$-tree $\mathcal{S}$ with finitely generated vertex and edge stabilisers that dominates $\mathcal{T}_i$ for all $i$.
\end{teo}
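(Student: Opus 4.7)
The plan is to reduce the statement to Proposition \ref{finitely_presented} via the Bieri--Strebel characterisation of $\fp_2(k)$. Using Lemma \ref{bieri_lemma}, I would choose a short exact sequence $1 \to N \to H \to G \to 1$ with $H$ finitely presented and $N_{\text{ab}} \otimes_{\Z} k = 0$. Each $G$-tree $\mathcal{T}_i$ pulls back through $H \twoheadrightarrow G$ to a cocompact $H$-tree on which $N$ acts trivially, and the refinement maps remain $H$-equivariant. Applying Proposition \ref{finitely_presented} to this sequence of $H$-trees produces a cocompact $H$-tree $\mathcal{S}_H$ with finitely generated vertex and edge stabilisers that dominates every $\mathcal{T}_i$.

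It then remains to cut down the $H$-action to a $G$-action. I would do this through Lemma \ref{quotient_tree}: if finitely many elements $n_1, \ldots, n_r \in N$, each fixing a vertex of $\mathcal{S}_H$, together normally generate $N$ inside $H$, then $\mathcal{S} = \mathcal{S}_H / \normal{n_1, \ldots, n_r}$ is a cocompact $G$-tree. Its cocompactness is inherited from $\mathcal{S}_H$, its vertex and edge stabilisers are finitely generated as quotients of those in $\mathcal{S}_H$, and, since $N$ acts trivially on each $\mathcal{T}_i$, the domination map $\mathcal{S}_H \to \mathcal{T}_i$ automatically factors through $\mathcal{S}$.

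The hypothesis $N_{\text{ab}} \otimes_{\Z} k = 0$ is exactly what will produce such $n_i$. The simplicial chain complex of the contractible tree $\mathcal{S}_H$ provides a short exact sequence
\[
0 \to \bigoplus_{[e]} k[H/H_e] \to \bigoplus_{[v]} k[H/H_v] \to k \to 0
\]
of left $k[H]$-modules, with finite direct sums indexed by $H$-orbits. Tensoring on the left with the right $k[H]$-module $k[G]$ (via $H \twoheadrightarrow G$) and identifying $\Tor_1^{k[H]}(k[G], k) \cong H_1(N; k) \cong N_{\text{ab}} \otimes_{\Z} k$ via Shapiro's lemma, the vanishing hypothesis upgrades this to an exact sequence of $k[G]$-modules
\[
0 \to \bigoplus_{[e]} k[G/\overline{H}_e] \to \bigoplus_{[v]} k[G/\overline{H}_v] \to k \to 0,
\]
where $\overline{H}_e, \overline{H}_v$ denote the images in $G$; this is the chain complex of a $k$-acyclic cocompact $G$-graph with finitely generated cell stabilisers.

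The main obstacle will be to promote this $k$-homological exactness to finite normal generation of $N$ by elliptic elements. My intended approach is an iterative finite-accessibility argument: maintain a subgroup $M \le N$ that is normally generated in $H$ by a finite set of vertex-stabilising elements of $\mathcal{S}_H$; examine the $G$-graph $\mathcal{S}_H/M$; and, as long as it fails to be a tree, adjoin to the generating set a further elliptic element of $N$ killing a surviving loop. The finiteness of the $k[G]$-resolution displayed above, together with the vanishing of $\Tor_1^{k[H]}(k[G], k)$, should force this enlargement process to terminate after finitely many steps, producing the required $n_1, \ldots, n_r$. This Bieri--Strebel-style conversion of $\Tor$-vanishing into honest finite normal generation, mirroring the core of \cite[Theorem A]{bieri_78}, is the principal technical content of the proof.
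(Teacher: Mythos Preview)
Your reduction via Lemma~\ref{bieri_lemma} and Proposition~\ref{finitely_presented} matches the paper exactly, and your homological computation in the third paragraph is correct. But you have already finished at that point and do not realise it. The exact sequence you display is precisely the simplicial chain complex of the connected graph $\mathcal{S}_H/N$, and injectivity of the left-hand map says $H_1(\mathcal{S}_H/N;k)=0$. For any graph $\Gamma$ one has $H_1(\Gamma;k)=H_1(\Gamma;\Z)\otimes_{\Z}k$ with $H_1(\Gamma;\Z)$ free abelian, so this vanishes for $k\neq 0$ only when $\Gamma$ is a forest. Hence $\mathcal{S}=\mathcal{S}_H/N$ is already the desired cocompact $G$-tree: its stabilisers $\overline{H}_v,\overline{H}_e$ are finitely generated as quotients of $H_v,H_e$, and the domination maps $\mathcal{S}_H\to\mathcal{T}_i$ factor through $\mathcal{S}$ since $N$ acts trivially on each $\mathcal{T}_i$. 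The iterative ``finite-accessibility'' argument you propose as the principal technical content is unnecessary---there is no obstacle to promote, and no need to exhibit finitely many elliptic elements normally generating $N$.

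The paper's argument is organised differently but rests on the same elementary fact. Rather than quotienting by all of $N$ at once and checking the result is a tree, the paper quotients $\mathcal{S}_H$ by $\normal{N_1,\ldots,N_n}$ where $N_i=\ker(\pi|_{H_i})$, obtaining a tree directly from Lemma~\ref{quotient_tree}. The image $N_Q$ of $N$ in $Q=H/\normal{N_1,\ldots,N_n}$ then meets every vertex stabiliser trivially, so acts freely on this tree and is therefore free; since $(N_Q)_{\mathrm{ab}}\otimes_{\Z}k=0$ and a nontrivial free group has nontrivial $k$-abelianisation for $k\neq 0$, one gets $N_Q=1$ and $Q\cong G$. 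Your route applies ``free abelian group with trivial $k$-tensor is trivial'' to $H_1(\mathcal{S}_H/N;\Z)$; the paper applies it to $(N_Q)_{\mathrm{ab}}$. These are two phrasings of the same step.
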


\begin{proof}
Since $G$ is of type $\fp_2(k )$, there is a short exact sequence
\[
\begin{tikzcd}
1 \arrow[r] & N \arrow[r] & H \arrow[r, "\pi"] & G \arrow[r] & 1
\end{tikzcd}
\]
where $H$ is finitely presented and $N_{\text{ab}}\otimes_{\Z}k =0$ by \cref{bieri_lemma}. Now $\pi$ turns $\mathcal{T}_i$ into an $H$-tree. Since $H$ is finitely presented, by \cref{finitely_presented} there is a cocompact $H$-tree $\mathcal{S}'$ with finitely generated vertex and edge stabilisers that dominates the $H$-trees $\mathcal{T}_i$ for all $i$. Denote by $v_1, \ldots, v_n$ orbit representatives of vertices in $\mathcal{S}'$. Denote by $H_1, \ldots, H_n \le  H$ the corresponding stabiliser subgroups, which are finitely generated. Let $N_i = \ker\left(\pi|_{H_i}\right)$. We now consider the quotients 
\begin{align*}
Q &= H/\normal{N_1, \ldots, N_n},\\
\mathcal{S} &= \mathcal{S}'/\normal{N_1, \ldots, N_n}. 
\end{align*}
By \cref{quotient_tree}, $\mathcal{S}$ is a $Q$-tree. Since $\mathcal{S}/Q\isom \mathcal{S}'/H$, we see that $\mathcal{S}$ is a cocompact $Q$-tree. As $\pi$ factors through $H\to Q$, we obtain domination maps of cocompact $Q$-trees $\mathcal{S}\to \mathcal{T}_i$. Denote by $N_Q$ the image of $N$ in $Q$. By definition, $N_Q$ intersects each vertex stabiliser of the $Q$-tree $\mathcal{S}$ trivially and thus acts freely on $\mathcal{S}$. A group acting freely on a tree is free and so $N_Q$ must be free. Since $N_{\text{ab}}\otimes_{\Z}k =0$ and $N_Q$ is a quotient of $N$, this implies that $(N_Q)_{\text{ab}}\otimes_{\Z}k =0$. If $F$ is a free group, then $F_{\text{ab}}\otimes_{\Z}k  = 0$ if and only if $F$ is trivial. Thus, $N_Q = 1$ and $Q\isom G$. This makes $\mathcal{S}$ a cocompact $G$-tree, dominating the $G$-trees $\mathcal{T}_i$. The vertex stabilisers of the $G$-tree $\mathcal{S}$ are conjugates of $H_1/N_1, \ldots, H_n/N_n$ and so are finitely generated. Similarly, the edge stabilisers are conjugates of quotients of the edge stabilisers of the $H$-tree $\mathcal{S}'$ and so are also finitely generated. This completes the proof.
\end{proof}

Now we prove Theorem \ref{equivalence}. 

\begin{teo} \label{equivalence1} Let $k$ be a commutative ring with $1\neq 0$, let $G$ be a group acting on a tree $\mathcal{T}$ with coherent vertex stabilisers. If $H\le  G$ is of type $\fp_2(k) $, then $H$ is finitely presented. In particular, if $G$ is homologically coherent, then $G$ is coherent.
\end{teo}
 
\begin{proof}
Let $H\le  G$ be a finitely generated subgroup. Since $H$ is finitely generated, there is an $H$-invariant subtree $\mathcal{S}\subset \mathcal{T}$ such that $\mathcal{S}$ is a cocompact $H$-tree. Now each vertex stabiliser for the action of $H$ on $\mathcal{S}$ is contained in a vertex stabiliser for the action of $G$ on $\mathcal{T}$. Since $H$ has type $\fp_2(k )$, by \cref{fp2_action}, $H$ splits as a finite graph of groups with finitely generated vertex and edge groups, each conjugate into vertex stabilisers for the action of $G$ on $\mathcal{T}$. Since the vertex stabilisers for the action of $G$ on $\mathcal{T}$ are coherent, it follows that $H$ splits as a finite graph of groups with finitely presented vertex and edge groups. Hence, $H$ is finitely presented.
\end{proof}

We record the following immediate consequence of \cref{equivalence1} for future use.

\begin{teo}
\label{hierarchy}
Denote by $\mathcal{CG}$ the smallest class of groups containing all coherent groups that is closed under finite extensions, subgroups, amalgamated free products, HNN-extensions and directed unions. Let $k$ be a commutative ring with $1\neq 0$. If $G\in \mathcal{CG}$ has the property that all of its finitely generated subgroups are of type $\fp_2(k)$, then $G$ is coherent.
\end{teo}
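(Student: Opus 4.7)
The plan is to induct on the construction of $\mathcal{CG}$ and prove the following strengthening of \cref{hierarchy}: for every $G \in \mathcal{CG}$, every finitely generated subgroup $K \le G$ with the property that every finitely generated subgroup of $K$ is of type $\fp_2(k)$ is finitely presented. This strengthening recovers \cref{hierarchy}: if all finitely generated subgroups of $G$ are of type $\fp_2(k)$, then any finitely generated $K \le G$ has the property that all its finitely generated subgroups (being finitely generated subgroups of $G$) are of type $\fp_2(k)$, so by the strengthening $K$ is finitely presented, making $G$ coherent.

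The base case, when $G$ is already coherent, is immediate. The subgroup case follows directly from the inductive hypothesis applied to the ambient group, since the hypothesis on $K$ is intrinsic. For finite extensions $G \supseteq H$, given such a $K \le G$, the subgroup $K \cap H$ has finite index in $K$ and inherits the hypothesis, so it is finitely presented by the inductive hypothesis on $H$, whence $K$ is as well. The directed union case is equally straightforward, since any finitely generated $K \le \bigcup_i G_i$ lies in some $G_i$.

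The main case is amalgamated products $G = A *_C B$ (the HNN-extension case is essentially identical). Given such a $K \le G$, I would consider the Bass--Serre action of $G$ together with a cocompact $K$-invariant subtree. Since $K$ is of type $\fp_2(k)$, \cref{fp2_action} produces a cocompact refinement exhibiting $K$ as the fundamental group of a finite graph of groups with finitely generated vertex and edge groups, each conjugate into $A$ or $B$. The crucial point is that each vertex group $V$ is a finitely generated subgroup of $A$ (or $B$), and every finitely generated subgroup of $V$, being a finitely generated subgroup of $K$, is of type $\fp_2(k)$. The inductive hypothesis applied to $A$ (or $B$) then yields that $V$ is finitely presented, so that $K$ is the fundamental group of a finite graph of finitely presented groups with finitely generated edge groups, and therefore finitely presented.

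The main obstacle is identifying the correct inductive statement. The naive claim --- ``if all finitely generated subgroups of $G$ are of type $\fp_2(k)$, then $G$ is coherent'' --- does not propagate cleanly through the subgroup operation, since a subgroup $G \le H$ satisfying this condition does not force $H$ to satisfy it. The strengthening above sidesteps this by packaging the hypothesis into the chosen subgroup $K$; a side benefit is that it bypasses a Mayer--Vietoris-type descent argument, which would otherwise be needed to transfer the $\fp_2(k)$ property from $K$ down to the vertex groups of its graph of groups decomposition.
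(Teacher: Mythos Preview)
Your proposal is correct and is essentially the approach the paper has in mind. The paper offers no detailed proof, recording \cref{hierarchy} as ``the following immediate consequence of \cref{equivalence}''; your argument is precisely the natural unpacking of that claim. In particular, the strengthened inductive statement you identify---that for every $G\in\mathcal{CG}$, every finitely generated $K\le G$ all of whose finitely generated subgroups are $\fp_2(k)$ is finitely presented---is exactly what is needed, since (as you note) in the amalgam and HNN steps the vertex groups $A,B$ lie in $\mathcal{CG}$ but need not themselves be coherent, so one cannot invoke \cref{equivalence} verbatim. Your use of \cref{fp2_action} in that step, and the observation that each resulting finitely generated vertex group $V\le K$ inherits the hypothesis and hence is finitely presented by the inductive assumption on $A$ or $B$, mirrors the proof of \cref{equivalence} and is sound.
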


We are now ready to settle Baumslag's conjecture.
\begin{teo}
If $G$ is a one-relator group, then 
$G$ is coherent.
\end{teo}
\begin{proof}  One-relator groups have vanishing second $L^2$-Betti number by work of Dicks--Linnell \cite{DL07} and hence, as do all of their finite index subgroups. One-relator groups are virtually torsion-free by Fisher--Karrass--Solitar \cite{FKS72} and hence, by work of Brodskii \cite{Br84} (see also \cite[Corollary 3.2]{Ho00}), they are virtually locally indicable. Since one-relator groups virtually have cohomological dimension at most two, applying \cref{npi}(2) we see that one-relator groups are homologically coherent. Alternatively, $G$ has a finite index subgroup that is the fundamental group of a two-complex with non-positive immersions \cite[Theorem 6.1]{HW16} and so we could also use \cref{npi}(1) to conclude that $G$ is homologically coherent. We could also deduce that every finitely generated subgroup of $G$ is of type $\fp_2(\Q)$ from the coherence of $\Q [G]$ proved in Theorem \ref{th:coherentgroupalgebra}.

If $G$ is a one-relator group, the Magnus--Moldavanskii hierarchy (in the form of \cite{masters_06} or \cite[Theorem 5.2]{linton_22}) tells us that there is a finite sequence of one-relator subgroups
\[
G_N \le \ldots\le G_1\le  G_0 = G
\]
such that $G_N$ is a free product of a free group and a finite cyclic group and $G_i$ splits as a HNN-extension over $G_{i+1}$. Since $G$ is homologically coherent, \cref{hierarchy} implies that $G$ is coherent.
\end{proof}

\section{Construction of flat modules}
\label{sect:flat}
In this section we will show that certain modules are flat. This will be used later in calculations of different Tor functors.

\subsection{Some auxiliary results}
Let $R$ be a ring. A  left  $R$-module $M$ is {\bf torsion-free} if for any $0\ne r\in R$, $m\in M$, we have that $r\cdot m = 0$ implies that  $m = 0$.

\begin{lem}\label{injectiveaction} Let $K$ be a field  and let $F$ be a free group.
Let $V$ be a left $K[F]$-module and let $G=F/N$ be a locally indicable group. Let $$0\ne \alpha =\displaystyle \sum_{i=1}^n c_i\cdot f_i\in K [F]\ \left (0\ne c_i\in K ,\ f_i\in F\right ) .$$
Assume that all $g_i=f_iN\in G$ are different.
If $\D_{K[G]}$ exists, then $$\Ann_{V\otimes_K  \D_{K [G]}}\left (\alpha\right ) =\{m\in V\otimes_K \D_{K [G]}\colon \alpha \cdot m=0\}$$ is trivial. In particular, if $V$ is a left $K[G]$-module, then $V\otimes_K \D_{K[G]}$ is torsion-free as a left $K[G]$-module.
\end{lem}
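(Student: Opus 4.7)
The plan is to combine Gr\"{a}ter's realization of $\D_{K[G]}$ as a Dubrovin ring of Malcev--Neumann series with a leading-coefficient argument. First I would reduce to the case where $G$ is finitely generated: the element $m\in V\otimes_K\D_{K[G]}$ is a finite sum $\sum_j v_j\otimes d_j$, and each $d_j$ lies in the division closure of $K[H_j]$ inside $\D_{K[G]}$ for some finitely generated $H_j\le G$. By uniqueness of Hughes-free division rings (Theorem~\ref{hughes}), this closure agrees with $\D_{K[H_j]}$; so after replacing $G$ by a finitely generated subgroup containing all the $H_j$ together with $g_1,\dots,g_n$, I may assume $G$ is finitely generated.

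Next, since $G$ is locally indicable, I would fix a right-invariant Conradian order $\preceq$ on $G$ and use Theorem~\ref{grater} to embed $\D_{K[G]}\cong\D_\preceq(K[G])$ into $\mathcal{MN}_\preceq(K[G])$ via $d\mapsto 1\cdot d$. Tensoring with $V$ over $K$, the element $m$ corresponds to a $V$-valued series $\sum_g w_g\cdot g$ whose support $U:=\{g:w_g\neq 0\}$ is well-ordered, and the diagonal left $F$-action is given by $(f_i m)(h)=f_i\cdot w_{g_i^{-1}h}$, so the equation $\alpha m=0$ becomes
\[
\sum_{i=1}^n c_i f_i\cdot w_{g_i^{-1}h}=0 \qquad\text{for every }h\in G.
\]
Assuming for contradiction that $m\neq 0$, I let $u_0$ be the $\preceq$-minimum of $U$ and introduce the auxiliary total order $\triangleleft$ on $G$ defined by $g\triangleleft h \iff h^{-1}g\prec 1$; its totality, antisymmetry, and transitivity follow from right-invariance of $\preceq$ combined with the identity $x\prec 1\iff x^{-1}\succ 1$. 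Choosing $i_0$ so that $g_{i_0}$ is the $\triangleleft$-minimum of $\{g_1,\dots,g_n\}$ and evaluating at $h=g_{i_0}u_0$, for every $i\neq i_0$ the $\triangleleft$-minimality gives $g_i^{-1}g_{i_0}\prec 1$, and right-invariance then yields $g_i^{-1}g_{i_0}u_0\prec u_0$, placing this group element below $\min U$ so that the corresponding $w$-coefficient vanishes. The only surviving term is $c_{i_0}f_{i_0}\cdot w_{u_0}$, which is nonzero because $c_{i_0}\neq 0$, $w_{u_0}\neq 0$, and $f_{i_0}$ acts invertibly on $V$; this contradicts $\alpha m=0$, so $m=0$. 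The ``in particular'' clause then follows by lifting any nonzero $x\in K[G]$ to some $\alpha\in K[F]$ with the same action on $V\otimes_K\D_{K[G]}$ when $V$ is a $K[G]$-module.

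The main subtlety I would expect is the need to use the auxiliary order $\triangleleft$ rather than $\preceq$ itself. With only right-invariance available, the naive choice of taking $g_{i_0}$ to be $\preceq$-minimal among the $g_i$ does not work, because $g_{i_0}\prec g_i$ does not imply $g_i^{-1}g_{i_0}\prec 1$ in a non-abelian setting; the order $\triangleleft$, which is morally the $\preceq$-order on inverses, is what makes the needed cancellation go through. A secondary technical point is that the diagonal $F$-action need not preserve well-orderedness on arbitrary $V$-valued Malcev--Neumann series, but it does preserve the image of $V\otimes_K\D_{K[G]}$, which is all that the argument needs.
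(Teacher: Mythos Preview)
Your overall strategy---embed $\D_{K[G]}$ into Malcev--Neumann series via Gr\"ater's theorem and run a leading-term argument---is appealing, but the proof has a genuine gap at the step where you translate the diagonal $F$-action into the series picture. The map $d\mapsto 1\cdot d$ from $\D_\preceq(K[G])$ to $\mathcal{MN}_\preceq(K[G])$ is a \emph{right} $K[G]$-module map, not a left one: there is no reason that $1\cdot(gd)=g\cdot d$ should equal $L_g(1\cdot d)$ when $\preceq$ is only right-invariant. Concretely, take $G=\langle a,b\mid bab^{-1}=a^{-1}\rangle$ with the right-invariant Conradian order given by comparing first the $b$-exponent and then the $a$-exponent in the normal form $a^mb^n$. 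Then $1\cdot(1-a)^{-1}=\sum_{n\ge 0}a^n$, whereas $b\cdot(1-a)^{-1}$ is the unique well-ordered solution of $t(1-a)=b$, namely $-\sum_{n\ge 1}a^n b$; by contrast $L_b\bigl(\sum_{n\ge 0}a^n\bigr)=\sum_{n\ge 0}a^{-n}b$ is not even well-ordered. So your formula $(f_i m)(h)=f_i\cdot w_{g_i^{-1}h}$ fails, and with it the minimum-support argument.

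The paper's proof avoids this issue entirely: it argues by induction on $n$, using local indicability to produce a surjection $\phi\colon H=\langle g_2,\dots,g_n\rangle\to\Z$, decomposing $\alpha$ in powers of a lift $s$ of a generator, and then exploiting the Hughes-free structure $\D_H\cong\D_{\ker\phi}(t^{\pm 1},\tau)$ to reduce to the leading $\D_{\ker\phi}$-coefficient $\alpha_b$, which has strictly fewer terms. Your approach could likely be repaired by passing to a \emph{left}-invariant Conradian order (so that $L_g$ preserves $\mathcal{MN}_\preceq$ and the evaluation-at-$1$ map becomes left $K[G]$-linear), but this requires the left-handed analogue of Theorem~\ref{grater} and is not what you wrote.
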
 
\begin{proof}
We prove the lemma by induction on $n$. If $n=1$, the statement is clear. 

Consider now the case where $n>1$ and assume that $ \Ann_{V\otimes_K  \D_{K [G]}}\left (\alpha\right ) \ne 0$. Without loss of generality we can assume that $f_1=1$. Let $H=\langle g_2,\ldots, g_n\rangle$. Since $n>1$ and all $g_i$ are different, $H$ is not trivial. Let $\widetilde H=\langle f_2,\ldots, f_n\rangle$. Since $G$ is locally indicable, there exists an epimorphism $ \phi: H\to \Z$, which induces an epimorphism $\widetilde \phi: \widetilde H\to \Z$ satisfying $\widetilde \phi\left (x\right ) =\phi\left (xN\right ) $.

Let $s\in \widetilde H$ be such that $\widetilde \phi \left (s\right ) =1$. Then we can write
$$\alpha=\sum_{j=a}^b \alpha_j\cdot s^j, \textrm{\ with\ } \alpha_j\in R [\ker \widetilde \phi] \textrm{\ and\ }\alpha_b\ne 0.$$
Observe that if we write $$\alpha_b=\sum_{k=1}^l d_k\cdot h_k\ \left (0\ne d_k\in R  , \ h_k\in F \right ) ,$$ then all $h_kN$ are different and $l<n$.

For simplicity we will write  $\D$ instead of $\D_{K [G]}$. We can see $V\otimes_K   \D$ as an $\left (K[F], \D\right ) $-bimodule. Therefore, given a basis $B$ of $\D$ as  a left  $\D_H$-module we obtain that $$V\otimes_K  \D=\oplus_{q\in B} \left (V\otimes_K  \D_Hq\right ) =\oplus_{q\in B} \left (V\otimes_K  \D_H\right ) q.$$
Thus, since $ \Ann_{V\otimes_K  \D}\left (\alpha\right ) \ne 0$, $\Ann_{V\otimes_K   \D_{H}}\left (\alpha\right )  \ne 0$ as well. 

Let $t=sN\in H$. Since $\D$ is Hughes-free, the ring  $S$  generated by $\D_{\ker \phi}$ and $t$ is isomorphic to the ring of twisted polynomials $\D_{\ker \phi}[t^{\pm1}, \tau]$, where $\tau$ is the automorphism of $\D_{\ker \phi}$ induced by conjugation by $t$. Observe that $\D_H$ is the Ore ring of fractions of $S$. Thus, since $\Ann_{V\otimes_K   \D_{H}}\left (\alpha\right )  \ne 0$, we also obtain that $\Ann_{V\otimes_K  S}\left (\alpha\right )  \ne 0$. 

Let $0\ne m\in \Ann_{V\otimes_K  S}\left (\alpha\right ) $. We can write $$m=\sum_{j=c}^d m_j \cdot t^j, \textrm{\ with\ } m_j\in V\otimes_K   \D_{\ker \phi}\textrm{\ and\ }m_d\ne 0.$$
Observe that for every $j$, $$s^j \left (V\otimes_K  \D_{\ker \phi}\right ) =V\otimes_K   t^j\D_{\ker \phi}=\left ( V\otimes_K   \D_{\ker \phi}\right ) t^j.$$
Thus, since,  $\alpha\cdot m=0$, $\alpha_bs^b\cdot m_dt^d=0$. Since $m_d\ne 0$, $s^b\cdot m_dt^d\ne 0$. Hence $\Ann_{V\otimes_K   \D}\left (\alpha_b\right ) \ne 0$. But this contradicts the inductive hypothesis.
\end{proof}

Given a left   $K [G]$-module $M$, let $M^*$ be the right   $K [G]$-module that coincides with $M$ as a $K $-vector space and the action of $G$ is given by
$m\cdot g=g^{-1} m$. In the same way, given a right $K [G]$ module $M$ we can define the left $K [G]$-module $M^*$.  
\begin{lem}\label{dual}
Let $G$ be a group. The following properties hold.
\begin{enumerate}
\item Let $W\le G$. Then we have that $\left (I_{K [G]}/\left ({}^GI_{K[W]}\right ) \right ) ^*\cong I_{K [G]}/I_{K[W]}^G$.
\item Given left $K [G]$-modules $M$ and $N$ and a right $K [G]$-module $L$, we have that for every $k\in \N$,
$$\Tor_k^{K [G]}\left (L,M\otimes_K  N\right ) \cong \Tor_k^{K [G]}\left (N^*,M\otimes_K  L^*\right ) .$$
\end{enumerate}
\end{lem}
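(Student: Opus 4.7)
The plan is to treat the two parts separately: part~(1) uses the standard anti-involution on $K[G]$, while part~(2) follows from a natural isomorphism of right exact functors, then derived.

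For part~(1), consider the $K$-linear anti-involution $\sigma\colon K[G]\to K[G]$ sending $\sum k_g g$ to $\sum k_g g^{-1}$. Then $\sigma(I_{K[G]})=I_{K[G]}$, and since $\sigma$ fixes the set $\{w-1\colon w\in W\}$, which generates ${}^G I_{K[W]}$ as a left ideal and $I_{K[W]}^G$ as a right ideal, we have $\sigma({}^G I_{K[W]})=I_{K[W]}^G$. Hence $\sigma$ descends to a $K$-linear bijection $\bar\sigma\colon I_{K[G]}/{}^G I_{K[W]}\to I_{K[G]}/I_{K[W]}^G$. By definition, the right action of $g$ on $\left(I_{K[G]}/{}^G I_{K[W]}\right)^*$ sends $[a]$ to $[g^{-1}a]$; applying $\bar\sigma$ converts this into $[\sigma(a)g]$, matching the usual right action on $I_{K[G]}/I_{K[W]}^G$. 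This yields the required isomorphism of right $K[G]$-modules.

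For part~(2), introduce the two covariant functors from left $K[G]$-modules to $K$-vector spaces given by
\begin{align*}
F(X)&=L\otimes_{K[G]}(M\otimes_K X),\\
F'(X)&=X^*\otimes_{K[G]}(M\otimes_K L^*),
\end{align*}
where $X\mapsto X^*$ is a covariant functor from left to right $K[G]$-modules: a left $K[G]$-map $f\colon X\to Y$ satisfies $f(g^{-1}x)=g^{-1}f(x)$, so it is also a right $K[G]$-map $X^*\to Y^*$. Define $\phi_X\colon F(X)\to F'(X)$ by $\phi_X(l\otimes(m\otimes x))=x\otimes(m\otimes l)$. A direct check, using that in $X^*\otimes_{K[G]}(M\otimes_K L^*)$ the tensor relation reads $h^{-1}x\otimes(m\otimes l)=x\otimes(hm\otimes lh^{-1})$, shows that the defining relations of $F(X)$ are sent to valid identities in $F'(X)$; the swap in the reverse direction gives an inverse, and naturality in $X$ is immediate.

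Now take a free resolution $Q_\bullet\to N$ as a left $K[G]$-module. Each $M\otimes_K Q_i$ is a free left $K[G]$-module: for $Q_i=K[G]$, the twist $m\otimes g\mapsto g^{-1}m\otimes g$ identifies $M\otimes_K K[G]$ with the free module on a $K$-basis of $M$. Hence $F(Q_\bullet)$ computes $\Tor_k^{K[G]}(L,M\otimes_K N)$. Similarly, $\sigma$ furnishes an isomorphism of right modules $K[G]^*\cong K[G]$, so each $Q_i^*$ is a free right $K[G]$-module and $F'(Q_\bullet)$ computes $\Tor_k^{K[G]}(N^*,M\otimes_K L^*)$. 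The natural isomorphism $\phi$ gives an isomorphism of chain complexes $F(Q_\bullet)\cong F'(Q_\bullet)$, and passing to homology produces the claimed identification of $\Tor$ groups. The only delicate point is the bookkeeping for $\phi_X$: one must carefully match the diagonal left action on $M\otimes_K X$ against the inverted right action on $X^*$ and the inverted left action on $L^*$, but once this is set up correctly everything is formal.
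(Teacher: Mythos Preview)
Your proof is correct and follows essentially the same approach as the paper, which simply declares part~(1) ``clear'' and for part~(2) cites \cite[Proposition III.2.2]{Br82} without further comment. You have written out in full the standard argument that the reference would supply: the anti-involution handles part~(1), and the natural isomorphism of functors $L\otimes_{K[G]}(M\otimes_K -)\cong (-)^*\otimes_{K[G]}(M\otimes_K L^*)$, together with the observation that $M\otimes_K K[G]$ is $K[G]$-free, yields part~(2) by computing both sides from the same resolution.
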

\begin{proof}
(1)  is clear   and the proof. 

To see (2), \cite[Proposition III.2.2]{Br82} yields the following
\begin{align*}
\Tor_k^{K[G]}(L, M\otimes_KN) &\cong \Tor_k^{K[G]}(K, L^*\otimes_K(M\otimes_KN))\\
\Tor_k^{K[G]}(N^*, M\otimes_KL^*) &\cong \Tor_k^{K[G]}(K, N\otimes_K(M\otimes_KL^*)).
\end{align*}
Since $L^*\otimes_K(M\otimes_KN) \cong N\otimes_K(M\otimes_KL^*)$, the proof is complete.
\end{proof}

\subsection{Proof of Theorems \ref{weak}, \ref{th:coherentgroupalgebra} and \ref{main}(2)}
In this subsection we construct a specific flat $K[G]$-module where $G$ is the fundamental group of a reducible two-complex without proper powers.  This  result  implies Theorem \ref{weak}.
\begin{teo}\label{key}
 Let $K$ be a field and  $G=\pi_1(X)$, where $X$ is a finite reducible two-complex without proper powers. Assume that $\D_{K[G]}$ exists. Then  the left $K [G]$-module $\D_{K [G] }\otimes_K  I_{K[G]}$ is flat.
\end{teo}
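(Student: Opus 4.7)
My plan is to induct on the number of two-cells of $X$.

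For the base case, $X$ is a graph, $G$ is free, and $I_{K[G]}$ is a free left $K[G]$-module. The untwisting isomorphism $\D\otimes_K K[G]\to \D\otimes_K K[G]$, $d\otimes g\mapsto g^{-1}d\otimes g$, converts the diagonal action on the source into the action on the second tensor factor alone on the target, exhibiting $\D\otimes_K K[G]$ (and hence $\D\otimes_K I_{K[G]}$) as a free---in particular flat---left $K[G]$-module.

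For the inductive step, I choose a reducing edge $e$ of $X$, let $\alpha$ be the unique two-cell whose attaching map traverses $e$, and set $Y=X-\alpha$. Then $Y$ is again a reducible two-complex without proper powers with one fewer two-cell, so by induction $\D_{K[H]}\otimes_K I_{K[H]}$ is flat over $K[H]$, where $H=\pi_1(Y)$. The group $G\cong H/\normal{r}$ is a one-relator quotient with $r$ representing the attaching map of $\alpha$; by the no-proper-powers hypothesis, $\langle r\rangle\cong \Z$. Since $X$ is aspherical (by \cref{reducible_npi} and \cref{npi_aspherical}), its cellular $K[G]$-chain complex is a free resolution of $K$, and comparing it to the cellular $K[G]$-chain complex of $Y$---whose first homology is the relation module $\normal{r}^{\mathrm{ab}}\otimes_{\Z} K\cong K[G]$---yields a short exact sequence of left $K[G]$-modules
\begin{equation*}
0\longrightarrow K[G]\xrightarrow{\,1\mapsto 1\otimes(r-1)\,} K[G]\otimes_{K[H]}I_{K[H]}\longrightarrow I_{K[G]}\longrightarrow 0.
\end{equation*}

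Applying the exact functor $\D\otimes_K-$ and using the natural $K[G]$-module isomorphism $\D\otimes_K(K[G]\otimes_{K[H]}M)\cong K[G]\otimes_{K[H]}(\D\otimes_K M)$, I obtain a short exact sequence whose first term $\D\otimes_K K[G]$ is free by the base case. The middle term $K[G]\otimes_{K[H]}(\D\otimes_K I_{K[H]})$ should be flat over $K[G]$ once I can upgrade the inductive hypothesis from $\D_{K[H]}$ to $\D=\D_{K[G]}$: by \cref{grater}, $\D_{K[H]}$ is the unique Linnell division $K[H]$-ring, and the Linnell-freeness embedding $\D_{K[H]}\otimes_{K[H]}K[G]\hookrightarrow\D$ combined with the Hughes-free structure should allow $\D\otimes_K I_{K[H]}$ to be realised as a flat extension of $\D_{K[H]}\otimes_K I_{K[H]}$ over $K[H]$, after which base change preserves flatness over $K[G]$. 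Flatness of the quotient $\D\otimes_K I_{K[G]}$ then follows via the $\Tor$ long exact sequence, provided the inclusion of the first term remains injective after tensoring with any right $K[G]$-module---a purity statement I expect to obtain from \cref{injectiveaction} applied to the nonzero element $r-1$, together with the cohomological dimension bound $\mathrm{gl.dim}(K[G])\leqslant 2$ (\cref{gldim}).

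The main obstacle I anticipate is the inductive step's two-fold upgrade: transferring flatness along the extension $\D_{K[H]}\subseteq \D_{K[G]}$ and verifying purity of the Fox-derivative map after tensoring with $\D\otimes_K-$. Both require a delicate structural understanding of $\D_{K[G]}$ as a $\D_{K[H]}$-module and its interplay with the $K[H]$-action, which bootstraps the Hughes-free/Linnell structure supplied by \cref{grater}.
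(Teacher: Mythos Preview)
Your inductive setup has a structural problem that blocks the ``upgrade'' step. You write $G=H/\normal{r}$ with $H=\pi_1(X-\alpha)$, so $H$ \emph{surjects onto} $G$; it is not a subgroup. Consequently there is no Linnell-freeness embedding $\D_{K[H]}\otimes_{K[H]}K[G]\hookrightarrow\D_{K[G]}$: viewing $K[G]$ as a $K[H]$-module via the quotient map, the tensor product $\D_{K[H]}\otimes_{K[H]}K[G]$ is a quotient of the division ring $\D_{K[H]}$, hence zero (the element $r-1$ is invertible in $\D_{K[H]}$ but dies in $K[G]$). So the bridge you invoke between $\D_{K[H]}$ and $\D_{K[G]}$ does not exist, and there is no way to deduce flatness of $\D_{K[G]}\otimes_K I_{K[H]}$ over $K[H]$ from the inductive hypothesis about $\D_{K[H]}\otimes_K I_{K[H]}$.

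The paper's proof avoids this by organising the induction through \emph{subgroups} rather than quotients: remove the reducing edge $e$ as well as the two-cell $\alpha$, so that $G$ decomposes as a one-relator product $G\cong (G_1*G_2)/\normal{w}$ with $G_1,G_2\le G$. Then $\D_{K[G_i]}$ is genuinely the division closure of $K[G_i]$ inside $\D_{K[G]}$, and $\D_{K[G]}$ is a left $\D_{K[G_i]}$-vector space, so $\D_{K[G]}\otimes_K I_{K[G_i]}$ is a direct sum of copies of $\D_{K[G_i]}\otimes_K I_{K[G_i]}$ as a $K[G_i]$-module and inherits flatness. One then uses the exact sequence
\[
0\to K[G]\xrightarrow{(\overline{\alpha_1},\overline{\alpha_2})} I_{K[G_1]}^G\oplus I_{K[G_2]}^G\to I_{K[G]}\to 0
\]
coming from the Fox derivatives of $w$, together with the duality trick (\cref{dual}) and the torsion-freeness from \cref{injectiveaction}, to conclude. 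Your purity idea via \cref{injectiveaction} is the right instinct for this last step, but it only becomes usable once the induction is run through the subgroup decomposition.
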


 \begin{proof}
We prove the theorem by induction on the number of two-cells in $X$.  If there are no two-cells, then the module $I_{K[G]}$ is free, and so  $\D_{K[G]}\otimes_K I_{K[G]}$ is free as well. The division ring $\D_{K[G]}$ in this case always exists for any field $K$ by a result of Lewin \cite{Le74}.

Since $X$ is  a finite reducible two-complex without proper powers,  $G=G_1*G_2/\normal{w}$, where $G_1$ and $G_2$
 are the fundamental groups of reducible two-complexes   and  $w$ is either 1 or  $w$ is   not conjugate into $G_1$ or $G_2$ within $G_1*G_2$ and is not equal to a proper power in $G_1*G_2$.

We have that by the inductive hypothesis $\D_{K [G_i] }\otimes_K  I_{K[G_i]}$ is flat as a left $K[G_i]$-module. Observe also that by \cite[Theorem 4.3]{howie_81} we can view $G_i$ ($i=1,2$)  as subgroups of $G$.
\begin{claim} \label{flatness}
For $i=1,2$,
$\D_{K [G] }\otimes_K \left  ({}^GI_{K [G_i]} \right  ) $ is flat as a left $K[G]$-module.
\end{claim}

\begin{proof} Observe that since the division subalgebra of $\D_{K [G] }$ generated by $K[G_i]$ is isomorphic (as a $K[G_i]$-ring)  to $\D_{K [G_i] }$, $\D_{K [G] }\otimes_K  I_{K[G_i]}$ is also flat as a left $K[G_i]$-module.

Let $L$ be a right $K[G]$-module. Then by Lemma \ref{dual}, Lemma \ref{augmentation} and Shapiro's lemma,
\begin{multline*}
\Tor_1^{K[G]}\left (L,\D_{K [G] }\otimes_K \left  ({}^GI_{K [G_i]} \right  ) \right )\cong \Tor_1^{K[G]}\left ( I_{K [G_i]}^G 
,  \D_{K [G] }\otimes_K L^*  \right ) \cong\\  \Tor_1^{K[G_i]} \left (I_{K[G_i]}, \D_{K [G] }\otimes_K L^*  \right ) \cong \Tor_1^{K[G_i]} \left (L,  \D_{K [G] }\otimes_K I_{K[G_i]} \right ) =\{0\}.
\end{multline*}
Therefore, $\D_{K [G] }\otimes_K \left  ({}^GI_{K [G_i]} \right  ) $ is flat.
\end{proof}
From the previous claim the theorem follows in the case $w=1$. Thus, from now on  we assume that $w\ne 1$.
 
We can write
$$ w-1=\alpha_1+\alpha_2  \textrm {\ with\ }\alpha_1\in I_{K[G_1]} ^{G_1*G_2} \textrm {\ and\ }\alpha_2\in {}I_{K[G_2]}^{G_1*G_2}.$$
 
\begin{claim}   \label{nontrivial}
For $i=1,2$ the image $\overline{\alpha_i}$ of $\alpha_i$ in $K[G]$ is not trivial. 
\end{claim}

\begin{proof}
Replacing $w$ by a conjugate if necessary, we can write $w = a_1b_1\ldots a_nb_n$, with $n\ge 1$, $1\ne a_i\in G_1$ and $1\ne b_i\in G_2$. We have that
 $$\alpha_1=\sum_{i=1}^n \left (a_i-1\right ) b_i a_{i+1}b_{i+1}\ldots a_nb_n \textrm{\ and \ }  \alpha_2=\sum_{i=1}^n \left (b_i-1\right )  a_{i+1}b_{i+1}\ldots a_nb_n.$$
By \cite[Corollary 3.4]{Ho82}, we see that there are no distinct suffixes $u, v$ of $ w$ (as a word over $G_1$ and $G_2$)  such that $u = v$ in $G$. This implies the claim.
\end{proof} 
We will write $\D$ instead of $\D_{K[G]}$ for ease of notation. Let $L$ be a right $K [G]$-module. Then by Lemma \ref{dual},
$$\Tor_1^{K [G]}\left (L,\D\otimes_{K} I_{K[G]}\right ) \cong \Tor_1^{K [G]}\left ( I_{K[G]} ,\D\otimes_K  L^*\right ) .$$

Consider the exact sequence
\[
0\to N_i\to F_i\to I_{K[G_i]}\to 0
\]
where $N_i$ is a relation module for $G_i$ and where $F_i$ is a free right $K[G_i]$-module for $i = 1, 2$. Howie showed in \cite[Theorem 10]{Ho84} that we have the following exact sequence
\begin{multline*}
0\to K[G]\oplus\left(N_1\otimes_{K[G_1]}K[G]\right)\oplus\left(N_2\otimes_{K[G_1]}K[G]\right) \to \\
(F_1\otimes_{K[G_1]}K[G])\oplus (F_2\otimes_{K[G_2]}K[G]) \to I_{K[G]}\to0.
\end{multline*}
Quotienting $F_i\otimes_{K[G_i]}K[G]$ by the image of $N_i\otimes_{K[G_i]}K[G]$ for $i = 1, 2$ leads us to the following exact sequence of right \( K[G] \)-modules:

 $$0\to K[G]\xrightarrow{\gamma} I_{K[G_1]}^G \oplus  I_{K[G_2]}^G\to  I_{K[G]} \to 0,$$
 where $\gamma\left (a\right ) = \left (\overline{\alpha_1} a, \overline{\alpha_2} a\right ) $. Thus, we obtain  the exact sequence
\begin{multline*}
\Tor_1^{K[G]}\left (I_{K[G_1]}^G \oplus  I_{K[G_2]}^G , \D\otimes_K  L^*\right ) \to \Tor_1^{K[G]}\left (I_{K[G]}  ,\D\otimes_K  L^*\right ) 
\to\\ \D\otimes_K  L^*\xrightarrow{\widetilde \gamma}\left (\left (I_{K[G_1]}^G\right ) \otimes_{K[G]}\D\otimes_K  L^*\right )  \oplus  \left (\left (I_{K[G_2]}^G\right ) \otimes_{K[G]}\D\otimes_K  L^*\right ) ,  \end{multline*}
  where $\widetilde \gamma\left (m\right ) = \left ( \overline{\alpha_1}\otimes m,\overline{\alpha_2}\otimes m\right ) $. Observe that the composition
  $$m\mapsto \overline{\alpha_1}\otimes m \mapsto \overline{\alpha_1} m$$ is the multiplication by $\overline{\alpha_1}$. By Claim \ref{nontrivial}, $\overline{\alpha_1}\ne 0$ and by Lemma  \ref{injectiveaction}, $ \D\otimes_K  L^*$ is torsion-free. Thus, $\ker \widetilde \gamma =\{0\}$. On the other hand by Claim \ref{flatness}, $$\Tor_1^{K[G]}\left (I_{K[G_1]}^G \oplus  I_{K[G_2]}^G , \D\otimes_K  L^*\right ) $$ is trivial. Thus, $$\Tor_1^{K [G]}\left (L,\D\otimes_{K} I_{K[G]}\right ) \cong \Tor_1^{K [G]}\left ( I_{K[G]} ,\D\otimes_K  L^*\right ) =\{0\},$$ and so  
  $\D\otimes_K  I_{K[G]} $ is flat.
\end{proof}
Theorem \ref{weak} now follows directly from the previous theorem.
\begin{proof}[Proof of Theorem \ref{weak}]
By symmetry we also have that the right $K[G]$-module $$\D_{K[G]}\otimes_K I_{K[G]}$$ is flat. Therefore, using Lemma \ref{dual},   we obtain that for any left $K[G]$-module $L$,
\begin{multline*}
\Tor_2^{K[G]}\left (\D_{K[G]}, L\right ) \cong \Tor_2^{K[G]}\left (\D_{K[G]} \otimes _K L^*, K\right ) \cong \\ \Tor_1^{K[G]}\left (\D_{K[G]}\otimes _K L^*, I_{K[G]}\right ) \cong  \Tor_1^{K[G]}\left (\D_{K[G]} \otimes _K   I_{K[G]}, L\right ) =\{0\}.\end{multline*}
Hence,  the right $K[G]$-module  $\D_{K[G]}$ is of weak dimension at most $1$.
\end{proof}
\begin{proof}[Proof of Theorem \ref{th:coherentgroupalgebra}]

By \cref{reducible_npi} and \cref{npi_aspherical}, we see that $X$ is an aspherical two-complex and $\pi_1(X)$ is locally indicable (see also \cite{Ho82}). Thus, $\pi_1(X)$ has cohomological dimension at most two, and so,  by Proposition \ref{gldim},  $K[G]$ has global dimension at most two. Since $\pi_1(X)$ is locally indicable, we see that $\D_{K[G]}$ exists by \cite{JL20}. By \cref{weak}, $\D_{K[G]}$ has weak dimension at most one. Now \cref{crit} tells us that $K[G]$ is coherent.
\end{proof}

\begin{proof}[Proof of Theorem \ref{main}(2)]

In  the case that $G$ is torsion-free the result follows from Theorem \ref{th:coherentgroupalgebra}.  If  $G$ has torsion,  by work of Kielak and the second author \cite{KL23}, $G$ is virtually free-by-cyclic. The fact that $K[G]$ is coherent follows  from \cite[Proposition 2.9]{HL22} or,  alternatively, we could use \cref{mappingtorus}.

\end{proof}

\subsection{Flat modules associated with Magnus subgroups}
\label{sub:Magnus}
Let $X = (\Gamma, \lambda)$ be a finite bireducible two-complex. Denote by $\overline X=(\Gamma, \overline \lambda)$ the bireducible two-complex such that  $\overline \lambda\colon\mathbb{S}\immerses \Gamma$ is primitive and there exists a cover $\mu\colon \mathbb{S} \immerses  \mathbb{S}$ with $\lambda =\overline  \lambda \circ \mu$. By Lemma \ref{proper_powers},  $\overline X$ is without proper powers. As we have already mentioned, Howie proved in \cite{Ho82} that $\overline G=\pi_1(\overline X)$ is locally indicable. The group $\overline G$ is also the maximal torsion-free quotient of $G$.

\begin{teo}\label{key3}
Let $G=\pi_1(X)$, where $X$ is a finite bireducible two-complex,   $\overline G=\pi_1(\overline X)$ and  $H=\pi_1(\Lambda)$ a Magnus subgroup of  $G$ where $\Lambda\subset X$ is small. Let $K$ be a field of characteristic coprime with the orders of finite elements of $G$  and  assume that $\D_{K[\overline G]}$ exists. Then the left $K[G]$-module 
$$\D_{K[\overline G]}\otimes \left (I_{K[G]}/\left ( {}^G I_{K[H]}\right )\right )$$ is flat.
\end{teo}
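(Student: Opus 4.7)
The plan is to proceed by induction on the number of 2-cells of $X$, generalising the argument for Theorem \ref{key} to accommodate the Magnus subgroup $H$ and the possible proper powers at 2-cells of $X$. For brevity write $\D = \D_{K[\overline G]}$ and let $M_Y = I_{K[\pi_1(Y)]}/{}^{\pi_1(Y)}I_{K[H]}$ for any bireducible subcomplex $Y\supseteq\Lambda$. Two preliminary observations will be used repeatedly. First, combining Lemma \ref{dual}(2) with Shapiro's lemma yields the identity
\[
\Tor_k^{K[G]}\!\bigl(L,\D\otimes_K(K[G]\otimes_{K[G']}N)\bigr)\cong\Tor_k^{K[G']}\!\bigl(L,\D\otimes_K N\bigr)
\]
for $G'\le G$ and $N$ a left $K[G']$-module, reducing flatness of a $\D$-tensored induced module to flatness over the subgroup. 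Second, since $\D$ is Linnell by Gr\"ater's Theorem \ref{grater}, the division subring of $\D$ generated by $K[\overline{\pi_1(Y)}]$ is $\D_{K[\overline Y]}$; as $\D_{K[\overline Y]}$ is a division ring, $\D$ is free as a left $\D_{K[\overline Y]}$-module, and hence $\D\otimes_K M_Y$ is $K[\pi_1(Y)]$-isomorphic to a direct sum of copies of $\D_{K[\overline Y]}\otimes_K M_Y$; so flatness of the former reduces to flatness of the latter.

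For the base case, $X$ is a graph, so $G$ is free and $\overline G = G$; I would sub-induct on $|X^{(1)}\setminus\Lambda^{(1)}|$. The sub-base $X=\Lambda$ gives $H=G$ and $M_X=0$, trivially flat. For the sub-step, deleting an edge produces $X'\supseteq\Lambda$ with $G\cong\pi_1(X')*F$ for some free group $F$ and $H\le\pi_1(X')$; the Bass--Serre splitting $I_{K[G]}\cong {}^G I_{K[\pi_1(X')]}\oplus{}^G I_{K[F]}$ yields $M_X\cong\bigl(K[G]\otimes_{K[\pi_1(X')]}M_{X'}\bigr)\oplus{}^G I_{K[F]}$, and the Shapiro-type identity reduces flatness of each $\D$-tensored summand to the sub-inductive hypothesis (first summand) or to freeness of $I_{K[F]}$ over $K[F]$ (second summand).

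For the main inductive step, smallness of $\Lambda$ provides a reduction $Z\subset X$ with $\Lambda\subseteq Z$, obtained by removing an edge $e$ and a 2-cell $\alpha$. Set $G_0 = \pi_1(Z\cup e) = G_Z*\langle x\rangle$ and let $r\in G_0$ be the attaching word of $\alpha$, so $G = G_0/\normal{r}$; let $n\ge 1$ be maximal with $r=\overline r^{\,n}$ in $G_0$. Applying the inductive hypothesis to $(Z,\Lambda)$ together with the preliminaries gives flatness of $\D\otimes_K M_Z$ over $K[G_Z]$, and the base-case free-product step then upgrades this to flatness of $\D\otimes_K M_{G_0}$ over $K[G_0]$. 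When $n = 1$, I would adapt the exact sequence from the proof of Theorem \ref{key},
\[
0 \to K[G] \to I_{K[G_Z]}^G\oplus I_{\langle x\rangle}^G \to I_{K[G]} \to 0,
\]
quotient its middle and right terms by $I_{K[H]}^G$ (using $H\le G_Z$), dualise via Lemma \ref{dual}, and tensor with $\D\otimes_K L^*$; the flatness already established for $G_0$ together with the Shapiro-type identity then collapses the long exact sequence, reducing flatness of $\D\otimes_K M_X$ to the injectivity of the multiplication-by-$\overline\alpha_2$ map on $\D\otimes_K L^*$ modulo the relevant quotient, where $\overline\alpha_2$ is the $\langle x\rangle$-component of the image of $r-1$ in $K[G]$. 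When $n\ge 2$, the image of $\overline r$ in $G$ has order $n$ coprime to $\mathrm{char}\,K$, so Maschke's theorem exhibits $K[G/\langle\overline r\rangle]$ as a direct summand of $K[G]$ via the idempotent $\tfrac1n(1+\overline r+\cdots+\overline r^{n-1})$, hence projective; the $n=1$ exact sequence is replaced by the Lyndon-type resolution
\[
0\to K[G/\langle\overline r\rangle]\xrightarrow{N(\overline r)}K[G]\to I_{K[G_Z]}^G\oplus I_{\langle x\rangle}^G\to I_{K[G]}\to 0,
\]
and the projective extra term contributes no new Tor.

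The main obstacle will be the final injectivity check: once one quotients the modified exact sequence by $I_{K[H]}^G$, the injectivity of the first map is no longer formal, and one must verify that $\overline\alpha_2$ acts injectively on $\D\otimes_K L^*$ modulo the image of $I_{K[H]}^G$. This is precisely where Lemma \ref{injectiveaction} will enter, via its statement that $V\otimes_K\D$ is torsion-free as a $K[\overline G]$-module for any $K$-vector space $V$, combined with a non-triviality check for $\overline\alpha_2$ in $K[G]$ in the style of Claim \ref{nontrivial}, suitably adapted to the present bireducible setting.
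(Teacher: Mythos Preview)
Your overall architecture matches the paper's: induction on the number of two-cells, use smallness of $\Lambda$ to find a reduction $Z\supseteq\Lambda$, establish flatness of the induced piece via the inductive hypothesis and Shapiro, and then reduce to an injectivity statement for the map coming from the Fox derivative in the $\langle t\rangle$-direction. The paper treats the torsion and torsion-free cases uniformly via the single short exact sequence
\[
0\to K[G]/(u-1)K[G]\xrightarrow{\gamma}\bigl(I_{K[A]}^G/I_{K[H]}^G\bigr)\oplus K[G]\to I_{K[G]}/I_{K[H]}^G\to 0,
\]
with $\gamma_2(a)=\beta\cdot\tfrac{u^l-1}{u-1}\cdot a$, rather than splitting into $n=1$ and $n\ge2$. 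Your four-term ``Lyndon'' sequence for $n\ge 2$ has the first map wrong: the kernel of the Fox map for $r=\overline r^{\,n}$ is $(\overline r-1)K[G]$, not the image of multiplication by $N(\overline r)$; these are complementary idempotent pieces of $K[G]$. Once corrected, your sequence collapses to the paper's three-term one, and the Maschke argument you invoke is exactly what the paper uses to show that $\tfrac{u^l-1}{u-1}\cdot m=0$ forces $m\in(u-1)(\D\otimes_K L^*)$.

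The genuine gap is in the final injectivity step. You propose to use the ``torsion-free as a $K[\overline G]$-module'' clause of Lemma~\ref{injectiveaction} together with a non-triviality check for $\overline\alpha_2$ ``in the style of Claim~\ref{nontrivial}''. Neither suffices here. The module $\D\otimes_K L^*$ is a $K[G]$-module with $G$ acting diagonally, but $L^*$ is not a $K[\overline G]$-module, so torsion-freeness over $K[\overline G]$ does not control the action of an element of $K[G]$. Concretely, $G$ may have torsion, so $K[G]$ has zero-divisors and there are nonzero $\beta\in K[G]$ with nonzero image in $K[\overline G]$ that do \emph{not} act injectively on $\D\otimes_K L^*$. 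What Lemma~\ref{injectiveaction} actually requires is that the support of $\beta$ in $G$ maps \emph{injectively} to $\overline G$, a strictly stronger statement than $\overline\beta\ne 0$. This is the content of the paper's Claim~\ref{nontrivial3}: after reducing to the one-relator subcomplex $X_\alpha$, one shows that the support elements of $\beta$ are proper suffixes of the cyclically reduced $u$, and then invokes Weinbaum's theorem that every proper nonempty subword of $u$ is nontrivial in $\langle X\mid u\rangle$ to conclude distinctness in $\overline G$. Claim~\ref{nontrivial} (Howie's prefix result, giving only $\overline{\alpha_i}\ne 0$ in a torsion-free $G$) is not the right model; you need this sharper subword argument, and it is the technical heart of the proof.
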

\begin{proof}
We prove the theorem by induction on the number of 2-cells of $X$. 
In the case where there are no 2-cells, \( G \) is a free group, and \( H \) is a free factor of \( G \). Thus, \( I_{K[G]} / \big({}^G I_{K[H]}\big) \) is free, and so the result follows.

Assume now that $X$ has   at least one 2-cell. 

Let $Z\subset X$ be a reduction such that $\Lambda \subset Z$ and let $\alpha$ be the two-cell in $X-Z$. We can attach an edge $e$ to $X$ so that $Z^\prime =Z\cup e$ is connected. Put $X^\prime =X\cup e$. Then $G^\prime=\pi_1(X^\prime)\cong G*\Z$. 

\begin{claim}\label{connect}
If the theorem holds for the pair $H\le G^\prime$, then it   holds also for the pair $H\le G$.
\end{claim}
\begin{proof}
 Let \(\overline{X'}\) be the bireducible two-complex without proper powers associated with \(X'\), and let \(\overline{G'} = \pi_1(\overline{X'})\).
We assume that the left $K[G^\prime]$-module $M=\D_{K[\overline {G^\prime}]}\otimes \left (I_{K[G^{\prime}]}/\left ( {}^G I_{K[H]}\right )\right )$ is flat.
 Let $L$ be any right $K[G]$-module. Then by Lemma \ref{dual}, Lemma \ref{augmentation} and Shapiro's lemma,
\begin{multline*}
\Tor_1^{K[G]}\left (L, \D_{K [\overline {G^\prime}] }\otimes_K\left  ({}I_{K [G]}/{}^GI_{K [H]} \right  ) \right )\cong\\  
\Tor_1^{K[G]}\left ( I_{K [G]}/I_{K [H]}^G
,  \D_{K [\overline {G^\prime}] }\otimes_K L^*  \right ) \cong
 \Tor_1^{K[G^\prime]} \left (I_{K [G]}^{G^{\prime}}/ I_{K [H]}^{G^\prime}, \D_{K [\overline {G^\prime}] }\otimes_K L^*  \right ) \cong\\ \Tor_1^{K[G^\prime]} \left (L,  \D_{K [\overline {G^\prime}] }\otimes_K  \left ({}^{G^\prime}I_{K [G]}/\left ({}^{G^\prime} I_{K [H]}\right) \right ) \right ) .
\end{multline*}
Since $G^\prime=G*\Z$,  $I_{K [G^\prime ]}\cong \left({}^{G^\prime}I_{K [G]}\right )\oplus K [G]$ as left $K[G]$-modules,
and so,  $K[G^\prime]$-module $\left({}^{G^\prime}I_{K [G]}\right )/\left ({}^{G^\prime} I_{K [H]}\right)$ is a direct summand of $ I_{K [G^\prime ]}/\left ({}^{G^\prime} I_{K [H]}\right)$. Hence, 
$\D_{K [\overline {G^\prime}] }\otimes_K  \left ({}^{G^\prime}I_{K [G]}/\left ({}^{G^\prime} I_{K [H]}\right) \right )$ is a direct summand of $M$, which is flat. We conclude that
$\Tor_1^{K[G]}\left (L, \D_{K [\overline {G^\prime}] }\otimes_K\left  ({}I_{K [G]}/{}^GI_{K [H]} \right  ) \right )=\{0\}.$
Therefore the left $K[G]$-module $\D_{K [\overline {G^\prime}] }\otimes_K\left  ({}I_{K [G]}/{}^GI_{K [H]} \right  )$ is flat. On the other hand,  the left $K[G]$-module $\D_{K[\overline G]} $ is a direct summand of 
$\D_{K[\overline {G^\prime}]}$. Thus,  $\D_{K[\overline G]}\otimes \left (I_{K[G]}/\left ( {}^G I_{K[H]}\right )\right )$ is also  flat.
\end{proof}
By Claim \ref{connect}, we can assume that $Z$ is connected.
Hence, we are in the following situation:
\begin{enumerate}
\item $A=\pi_1(Z)$.
\item $G=A*\langle t\rangle/\normal{w}$, where $w=u^l$ is a word over the free product $A*\langle t\rangle$, representing the boundary of $\alpha$ in $\pi_1(X)$.
\item $H$ is a Magnus subgroup of $A$.
\end{enumerate}

By the induction hypothesis we have that $\D_{K [\overline A] }\otimes_K \left (I_{K[A]}/\left ( {}^A I_{K[H]}\right )\right )$ is flat   as a left $K[A]$-module. Observe also that by \cite[Theorem 4.3]{howie_81} we can view $\overline A$  as a subgroup of $\overline G$.
\begin{claim} \label{flatness3}
We have that 
$\D_{K [\overline G] }\otimes_K \left  ({}^GI_{K [A]}/{}^GI_{K [H]} \right  ) $ is flat as a left $K[G]$-module.
\end{claim}

\begin{proof} Observe that since the division subalgebra of $\D_{K [\overline G] }$ generated by $K[\overline A]$ is isomorphic (as a $K[\overline A]$-ring)  to $\D_{K [\overline A] }$, $\D_{K [\overline G] }\otimes_K \left (I_{K[A]}/\left ( {}^A I_{K[H]}\right )\right )$   is also flat as a left $K[A]$-module.

Let $L$ be a right $K[G]$-module. Then by Lemma \ref{dual}, Lemma \ref{augmentation} and Shapiro's lemma,
\begin{multline*}
\Tor_1^{K[G]}\left (L, \D_{K [\overline G] }\otimes_K\left  ({}^GI_{K [A]}/{}^GI_{K [H]} \right  ) \right )\cong\\  \Tor_1^{K[G]}\left ( I_{K [A]}^G/I_{K [H]}^G
,  \D_{K [\overline G] }\otimes_K L^*  \right ) \cong
 \Tor_1^{K[A]} \left (I_{K [A]}/ I_{K [H]}^A, \D_{K [\overline G] }\otimes_K L^*  \right ) \cong\\ \Tor_1^{K[G_i]} \left (L,  \D_{K [\overline G] }\otimes_K  \left (I_{K [A]}/{}^A I_{K [H]} \right ) \right )=0.
\end{multline*}
Therefore, $\D_{K [\overline G] }\otimes_K \left  ({}^GI_{K [A]}/{}^GI_{K [H]} \right  ) $  is flat.
\end{proof}
 
  We can write
$$ u-1=\alpha_1+(t-1) \beta \textrm {\ with\ }\alpha_1\in I_{K[A]} ^{A*\langle t\rangle} \textrm {\ and\ } \beta\in K[A*\langle t\rangle].$$

\begin{claim}   \label{nontrivial3}
If we write $\beta  =\displaystyle \sum_{i=1}^n c_i\cdot f_i\in K [G]\ \left (0\ne c_i\in K ,\ f_i\in G\right )$. Then
  all the images of $f_i$ in $\overline G$ are different. \end{claim}
\begin{proof}
Denote by $X_{\alpha}\subset X$ the smallest subcomplex containing the two-cell $\alpha$. By  Lemma \ref{injective_sub}, $X_\alpha$ is a $\pi_1$-injective  subcomplex of  $X$. Observe also that $\overline{X_{\alpha}}$ is  a $\pi_1$-injective  subcomplex of  $\overline X$.  Thus, in this claim we can assume that $X=X_\alpha$.

Hence $\pi_1(X)=\langle t, x_1,\ldots, x_d|u^l\rangle$, where $u$ is not a proper power in the free group $F$ generated freely by 
$\{t, x_1,\ldots, x_d\}$. Put $x_0=t$
and let $u=x_{i_1}^{\epsilon_1}\ldots x_{i_k}^{\epsilon_k}$ be the reduced  form of $u$ (here $\epsilon_i=\pm 1$). We can also assume that $u$ is cyclically reduced. Since $u$ is not conjugate to a word in $\langle x_1,\ldots, x_d\rangle$, we have that  $x_0$ appears in the expression of $u$. Write
$$u-1= (x_0-1)\alpha_0+\cdots+ (x_d-1)\alpha_d, \textrm{\ with\ }\alpha_i\in K[F].$$
The support of $\alpha_0$ consists of words 
$x_{i_m}^{\epsilon_m}\ldots x_{i_k}^{\epsilon_k}$, where $$1\le m\le k+1 \textrm{\ and\ } x_{i_m}^{\epsilon_m}=x_0^{-1} \textrm{\ or\ }  x_{i_{m-1}}^{\epsilon_{m-1}}=x_0.$$
 Observe that, since $u$ is reduced, both cases cannot occur and
  since $u$ is cyclically reduced, both $1$ and $u$ cannot be in the support of $\alpha_1$. By \cite{We72} the image in $\overline G\cong \langle x_0, x_1,\ldots, x_d|u\rangle$ of every  proper non-trivial subword of $u$ is non-trivial. This implies the claim.
 \end{proof}

 \begin{claim} \label{presentation} We have the following exact sequence of right $K[G]$-modules:
 $$0\to K[G]/(u-1)K[G]\xrightarrow{\gamma} I_{K[A]}^G  \oplus K[G]\to   I_{K[G]} \to 0,$$
  where $\gamma=(\gamma_1,\gamma_2)$ and $\gamma_2(x ) = \beta \cdot (u^{l-1}+\ldots+u+1)  \cdot x  $. 
  \end{claim}
 \begin{proof}
 This is well-known when $X=X_\alpha$, or rather when $G$ is a one-relator group, by Lyndon's identity Theorem (see \cite[Section 11]{Lyn50}). In the general case we have that
   \[
X = Z\cup_{\Lambda}X_{\alpha}
\]
where $\Lambda = Z\cap X_{\alpha}$.  Put $T=\pi_1(\Lambda)$ and $S=\pi_1(X_\alpha)$. This leads to the exact sequences
\begin{multline*}
0\to I_{K[T]}^G\to I_{K[A]}^G\oplus I_{K[S]}^G\to I_{K[G]}\to 0 \textrm{\ and\ } \\ 0\to K[G]/(u-1)K[G]\xrightarrow{\gamma} I_{K[T]}^G\oplus K[G]\to I_{K[S]}^G\to 0,\end{multline*}
 where $\gamma=(\gamma_1,\gamma_2)$ and $\gamma_2(a ) = \beta \cdot (u^{l-1}+\ldots+u+1)  \cdot a$.  Combining these two exact sequences we obtain the claim.
 \end{proof}
 We will write $\D$ instead of $\D_{K[\overline G]}$.
 Let $L$ be a right $K [G]$-module. Then by Lemma \ref{dual},
$$\Tor_1^{K [G]}\left (L,\D \otimes_K \left  ( I_{K[G]}/\left ( {}^G I_{K[H]}\right ) \right  ) \right ) \cong \Tor_1^{K [G]}\left (  I_{K[G]}/\left (  I_{K[H]}^G\right ) ,\D\otimes_K  L^*\right ) .$$
By Claim \ref{presentation}, we have the following exact sequence of right $K[G]$-modules:
 $$ \ker \overline \gamma \to K[G]/(u-1)K[G]\xrightarrow{\overline \gamma} \left ( I_{K[A]}^G/\left (  I_{K[H]}^G\right ) \right )\oplus K[G]\to   I_{K[G]}/\left (  I_{K[H]}^G\right ) \to 0,$$
 where $\overline \gamma=(\overline \gamma_1,\gamma_2)$ and $\gamma_2(a ) = \beta \cdot (u^{l-1}+\ldots+u+1)  \cdot a  $. Since $\gamma_2$ is injective, $\ker \overline \gamma=0$.
  Thus, we obtain  the exact sequence
\begin{multline*}
\Tor_1^{K[G]}\left(  \left ( I_{K[A]}^G/\left ( I_{K[H]}^G\right ) \right )\oplus K[G]\ , \D\otimes_K  L^*\right ) \to \\ \Tor_1^{K[G]}\left (   I_{K[G]}/\left ( I_{K[H]}^G\right )   ,\D\otimes_K  L^*\right ) 
\to\\ 
\D\otimes_K  L^*/(u-1)\left (\D\otimes_K  L^*\right )\xrightarrow{\widetilde \gamma}
\left ( I_{K[A]}^G/\left ( I_{K[H]} ^G  \right )  \otimes_{K[G]} \left ( \D\otimes_K  L^*\right ) \right )  \oplus \left (  \D\otimes_K  L^* \right ),  \end{multline*}
  where $\widetilde \gamma=(\widetilde \gamma_1,\widetilde \gamma_2)$ with $\widetilde \gamma_2(m)=\beta\cdot (u^{l-1}+\ldots+u+1)  \cdot m$.

  By Claim \ref{nontrivial3} and  by Lemma  \ref{injectiveaction},  if $m\in \D\otimes_K  L^*$ and $\beta \cdot m=0$, then $m=0$. Also, since the characteristic of $K$ is coprime with $l$,  if $  (u^{l-1}+\ldots+u+1)  \cdot m=0$, we have that
$$m=\frac 1l \sum_{i=0}^{l-1}(1-u^i)m \in (u-1)\left (\D\otimes_K L^* \right ).$$ Thus,  $\ker \widetilde \gamma =\{0\}$. On the other hand by Claim \ref{flatness3}, $$\Tor_1^{K[G]}\left(  \left(  I_{K[A]}^G/\left ( I_{K[H]}^G\right ) \right )\oplus K[G]\ , \D\otimes_K  L^*\right ) $$ is trivial. Thus, $$\ \Tor_1^{K[G]}\left (   I_{K[G]}/\left (  I_{K[H]}^G \right )   ,\D\otimes_K  L^*\right )  =\{0\},$$ and so  $\D_{K[\overline G]}\otimes \left (I_{K[G]}/\left ( {}^G I_{K[H]}\right )\right )$ is flat.
\end{proof}

\subsection{Flat modules for free group algebras}
Observe that if a ring $T$ is a quotient of a ring $S$ and $M$ is a left flat $S$-module, then $T\otimes_S M$ is a left flat $T$-module. In the following theorem we show that in the case where the presentation complex of $G=\langle X|R\rangle$ is reducible without proper powers, the left flat $K[G]$-module constructed in Theorem \ref{key} can be lifted to a left flat $K[F]$-module.

\begin{teo}\label{key2}
 Let $K$ be a field, $F$  a free group, $W$ a strictly reducible subgroup of $F$ and put $G=F/\normal{W}$. Assume that $\D_{K[G]}$ exists. Then  the left $K [F]$-module $$\D_{K [G] }\otimes_K  \left  (I_{K [F]}/\left  ({}^FI_{K[W]}\right  ) \right  ) $$ is flat.
 \end{teo}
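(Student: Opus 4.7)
The plan is to mirror the proof of Theorem \ref{key}, working at the level of $K[F]$-modules rather than $K[G]$-modules, by induction on the length $n$ of a strictly reducible decomposition $R = \{r_1, \ldots, r_n\}$ of $W$.

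For the base case $n=0$ the module $I_{K[F]}/{}^F I_{K[W]} = I_{K[F]}$ is a free left $K[F]$-module, and the standard twist $d\otimes f \mapsto \phi(f)^{-1}d \otimes f$ converts the diagonal $K[F]$-module $\D_{K[F]}\otimes_K K[F]$ into the $K[F]$-module with $K[F]$ acting only on the second factor, exhibiting it as free. For the inductive step, write $F = F'\ast\langle x\rangle$ with $x = x_n$, put $W' = \langle r_1,\ldots,r_{n-1}\rangle$, $r = r_n$ and $G' = F'/\normal{W'}$, so that $G = G'\ast\langle x\rangle/\normal{\bar r}$. Since $G$ is locally indicable, $\D_{K[G]}$ is Linnell by Theorem \ref{grater}, and $G'$ embeds in $G$ by \cite[Theorem 4.3]{howie_81}; hence $\D_{K[G']}$ embeds as a division subring of $\D_{K[G]}$, making $\D_{K[G]}$ a free left $\D_{K[G']}$-module. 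The natural isomorphism
$$\D_{K[G]}\otimes_K M' \cong (\D_{K[G']}\otimes_K M')\otimes_{\D_{K[G']}}\D_{K[G]}$$
of left $K[F']$-modules (with $M' = I_{K[F']}/{}^{F'}I_{K[W']}$), combined with the inductive hypothesis and the exactness of $-\otimes_{\D_{K[G']}}\D_{K[G]}$, shows $\D_{K[G]}\otimes_K M'$ is flat over $K[F']$. A Shapiro change-of-rings computation using Lemma \ref{augmentation} and the freeness of $K[F]$ over $K[F']$ then transfers this to flatness of $\D_{K[G]}\otimes_K({}^F I_{K[F']}/{}^F I_{K[W']})$ over $K[F]$, in direct analogy with Claim \ref{flatness} in the proof of Theorem \ref{key}.

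To incorporate the new relator $r$, I exploit the free product decomposition $I_{K[F]} = {}^F I_{K[F']}\oplus K[F](x-1)$ to write $r-1 = \alpha_1 + \beta(x-1)$ with $\alpha_1\in{}^F I_{K[F']}$ and $\beta\in K[F]$. Since $r$ involves $x$ and $K[F]$ is a domain, $\beta\neq 0$, and one obtains a short exact sequence of left $K[F]$-modules
$$0\to K[F]\xrightarrow{\phi}\bigl({}^F I_{K[F']}/{}^F I_{K[W']}\bigr)\oplus K[F](x-1)\to M\to 0,$$
where $M = I_{K[F]}/{}^F I_{K[W]}$ and $\phi(a) = (\overline{a\alpha_1},\, a\beta(x-1))$; injectivity of $\phi$ follows from $\beta\neq 0$ and $K[F]$ being a domain. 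Applying the exact functor $\D_{K[G]}\otimes_K -$ and passing to the long exact $\Tor^{K[F]}$-sequence, the flatness of $\D_{K[G]}\otimes_K M$ reduces to showing that the induced map on $L\otimes_{K[F]}(\D_{K[G]}\otimes_K K[F])$ is injective for every right $K[F]$-module $L$, since the middle term's $\D_{K[G]}$-tensor is flat over $K[F]$ by the previous paragraph and the base case.

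The principal obstacle is precisely this injectivity. Since both $r$ and $1$ map to the identity in $G$, Lemma \ref{injectiveaction} applied directly to $r-1$ with the quotient $G$ yields nothing. My plan is to prove a $K[F]$-level analogue of Claim \ref{nontrivial}: using the Fox-derivative structure of $r$ together with the strict reducibility hypothesis that $\bar r$ is not a proper power in $G'\ast\langle x\rangle$, I expect to show that the support of the coefficient arising from $\phi$ (in particular from its $K[F](x-1)$-component determined by $\beta$) has pairwise distinct images in $G$. Once this is established, Lemma \ref{injectiveaction} delivers the required injectivity and closes the induction, giving flatness of $\D_{K[G]}\otimes_K(I_{K[F]}/{}^F I_{K[W]})$ over $K[F]$.
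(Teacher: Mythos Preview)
Your proposal is correct and follows essentially the same route as the paper's proof: induction on the number of relators, the short exact sequence coming from $r-1=\alpha_1+\beta(x-1)$, flatness of the piece built from ${}^F I_{K[F']}/{}^F I_{K[W']}$ via induction and a Shapiro argument, and the key claim that the support of $\beta$ has pairwise distinct images in $G$ (this is exactly Claim~\ref{nontrivial2} in the paper, proved just as Claim~\ref{nontrivial}).

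The one place where your writeup is slightly imprecise is the very last step. In your left-module setup the map whose injectivity you need on $L\otimes_{K[F]}(\D_{K[G]}\otimes_K K[F])$ is induced by \emph{right} multiplication by $\beta(x-1)$, whereas Lemma~\ref{injectiveaction} is formulated for the \emph{left} diagonal action of an element on $V\otimes_K\D_{K[G]}$. The paper handles this by first invoking Lemma~\ref{dual}(2) to rewrite $\Tor_1^{K[F]}\bigl(L,\D\otimes_K(I_{K[F]}/{}^FI_{K[W]})\bigr)\cong\Tor_1^{K[F]}\bigl(I_{K[F]}/I_{K[W]}^F,\D\otimes_K L^*\bigr)$ and then running the argument with the right-module exact sequence, so that the connecting map becomes $m\mapsto\beta m$ on $\D\otimes_K L^*$ and Lemma~\ref{injectiveaction} applies verbatim. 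Your approach can be completed the same way (or equivalently by the twist you used in the base case, which identifies $L\otimes_{K[F]}(\D\otimes_K K[F])$ with $\D\otimes_K L^*$ and turns your map into left multiplication by $(\beta(x-1))^*$, whose support again has distinct images in $G$). Either way this is a bookkeeping point, not a conceptual gap.
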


\begin{rem*} Notice that $$K[G]\otimes_{K[F]} \left  (\D_{K [G] }\otimes_K  \left (I_{K [F]}/\left ({}^FI_{K[W]}\right ) \right ) \right  ) \cong \D_{K [G] }\otimes_K  I_{K [G]}$$ as left $K[G]$-modules.
\end{rem*}

\begin{proof}
We prove the theorem by induction on the rank of $W$. If $W=\{1\}$, then the module $I_{K [F]}/I_{K[W]}^F\cong I_{K [F]}$ is free, and so, $\D_{K[F]}\otimes_K I_{K [F]}$ is free as well.
 
Now suppose that $F$ is generated by $X= X_1\sqcup \{x\}$, $W$ is generated by $R=R_1  \sqcup \{w\}$, the complex associated with the presentation $G_1=\langle X_1 |R_1 \rangle$ is   reducible without proper powers and  $w$ is either 1 or the image $\overline w$ of  $w$ within $G_1*\langle x \rangle $  is   not conjugate in to $G_1$ or $\langle x\rangle $  and
it  is not equal to a proper power in $G_1*\langle x \rangle$.
  We leave the case $w=1$ to the reader and  assume that $w\ne 1$.

We put $F_1=\langle X_1\rangle$, $F_2=\langle x\rangle $ and $W_1=\langle R_1\rangle$.
 We have that by the inductive hypothesis $\D_{K [G_1] }\otimes_K  \left (I_{K [F_1]}/\left ({}^{F_1}I_{K[W_1]}\right ) \right ) $ is flat as a left $K[F_1]$-module. 
\begin{claim} \label{flatness2}
The left $K[F]$-module
$\D_{K [G] }\otimes_K \left  (\left ({}^FI_{K [F_1]}\right ) /\left ({}^FI_{K[W_1]}\right ) \right  ) $ is flat.
\end{claim}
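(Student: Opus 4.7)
The plan is to mirror the structure of Claims \ref{flatness} and \ref{flatness3}: the inductive hypothesis provides flatness at the level of $K[F_1]$, and I would transfer it to flatness at the level of $K[F]$ using Lemma \ref{dual}, Lemma \ref{augmentation}(2), and Shapiro's lemma. First I would upgrade the coefficient ring from $\D_{K[G_1]}$ to $\D_{K[G]}$, and then perform the formal chain of Tor isomorphisms to conclude.

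By the inductive hypothesis on the rank of $W$, the module $\D_{K[G_1]}\otimes_K(I_{K[F_1]}/{}^{F_1}I_{K[W_1]})$ is flat as a left $K[F_1]$-module. By Howie's Freiheitssatz \cite[Theorem 4.3]{howie_81}, $G_1$ embeds in $G$, so the division subring of $\D_{K[G]}$ generated by the image of $K[G_1]$ is $K[G_1]$-isomorphic to $\D_{K[G_1]}$. Using that $\D_{K[G]}$ is flat as a right $\D_{K[G_1]}$-module (a standard property of Hughes-free division rings, already invoked implicitly in the opening sentences of Claims \ref{flatness} and \ref{flatness3}), one deduces that $\D_{K[G]}\otimes_K(I_{K[F_1]}/{}^{F_1}I_{K[W_1]})$ is also flat as a left $K[F_1]$-module.

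For any right $K[F]$-module $L$, I would then chain together the following isomorphisms:
\begin{align*}
&\Tor_1^{K[F]}\!\left(L,\ \D_{K[G]}\otimes_K\!\left({}^F I_{K[F_1]}/{}^F I_{K[W_1]}\right)\right)\\
&\quad\cong\ \Tor_1^{K[F]}\!\left(I_{K[F_1]}^F/I_{K[W_1]}^F,\ \D_{K[G]}\otimes_K L^*\right)\\
&\quad\cong\ \Tor_1^{K[F_1]}\!\left(I_{K[F_1]}/I_{K[W_1]}^{F_1},\ \D_{K[G]}\otimes_K L^*\right)\\
&\quad\cong\ \Tor_1^{K[F_1]}\!\left(L,\ \D_{K[G]}\otimes_K\!\left(I_{K[F_1]}/{}^{F_1}I_{K[W_1]}\right)\right)\ =\ 0,
\end{align*}
where the first and third isomorphisms apply Lemma \ref{dual} (using the anti-involution $g\mapsto g^{-1}$ on $K[F]$ to identify the left and right versions of the quotients), the second combines Lemma \ref{augmentation}(2)---which gives $I_{K[F_1]}^F/I_{K[W_1]}^F\cong (I_{K[F_1]}/I_{K[W_1]}^{F_1})\otimes_{K[F_1]}K[F]$---with Shapiro's lemma, applicable because $K[F]$ is free as a right $K[F_1]$-module (since $F\cong F_1*\langle x\rangle$), and the final vanishing is the flatness established above.

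The only nontrivial input is the coefficient upgrade, for which one needs the flatness of $\D_{K[G]}$ as a right $\D_{K[G_1]}$-module; this is a known property of Hughes-free division rings and is, as noted, already tacitly used in the earlier claims, so every remaining step in the argument is purely formal.
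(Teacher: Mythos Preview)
Your proof is correct and follows exactly the approach the paper intends: it mirrors the proof of Claim~\ref{flatness} (as the paper indicates), using the inductive hypothesis, the identification of $\D_{K[G_1]}$ with the division closure of $K[G_1]$ inside $\D_{K[G]}$, and the chain of isomorphisms via Lemma~\ref{dual}, Lemma~\ref{augmentation}(2), and Shapiro's lemma. One small remark: the coefficient upgrade from $\D_{K[G_1]}$ to $\D_{K[G]}$ is most naturally justified by decomposing $\D_{K[G]}$ as a \emph{left} $\D_{K[G_1]}$-module (so that the diagonal $K[F_1]$-action respects the summands), but since every module over a division ring is free this distinction is immaterial.
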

\begin{proof}
It is proved in the same way as Claim \ref{flatness}.
\end{proof}
Since the group $F$ is a free product of $F_1$ and $F_2$, we can write
$$w-1=\alpha_1+\ \beta\left (x-1\right )  \textrm {\ with\ }\alpha_1\in {}^FI_{F_1} \textrm {\ and\ }\beta\in K[F].$$
 
\begin{claim}   \label{nontrivial2}
If we write $\beta  =\displaystyle \sum_{i=1}^n c_i\cdot f_i\in K [F]\ \left (0\ne c_i\in K ,\ f_i\in F\right )$. Then
  all $g_i=f_i\normal{W}\in G$ are different.
\end{claim}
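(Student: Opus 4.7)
To pin down $\beta$ explicitly, I write $w$ in reduced form over the free product $F = F_1 * \langle x\rangle$ as $w = u_1 u_2 \cdots u_N$ with $u_j \ne 1$ alternately in $F_1$ and in $\langle x\rangle$. The derivation identity $ab - 1 = (a - 1) + a(b - 1)$ applied inductively yields
$$w - 1 = \sum_{j=1}^N u_1 \cdots u_{j-1}(u_j - 1).$$
Those summands with $u_j \in F_1$ lie in ${}^F I_{F_1}$ and contribute to $\alpha_1$. For the remaining indices, write $u_j = x^{n_j}$ and factor $x^{n_j} - 1 = \delta_{n_j}(x-1)$ in $K[\langle x\rangle]$, where $\delta_n$ is a non-zero Laurent polynomial in $x$ whose support consists of $|n|$ distinct powers of $x$. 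This produces
$$\beta = \sum_{j : u_j \in \langle x\rangle} u_1 \cdots u_{j-1}\, \delta_{n_j},$$
so the support of $\beta$ consists of elements of the form $u_1 \cdots u_{j-1} \cdot x^k \in F$, indexed by pairs $(j,k)$ where $u_j = x^{n_j}$ and $x^k$ lies in the support of $\delta_{n_j}$. These elements are pairwise distinct in $F$ by the normal form theorem for the free product $F_1 * \langle x\rangle$.

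To transfer this distinctness to $G$, I factor through the intermediate quotient $\overline{F} = G_1 * \langle x\rangle$. The strictly reducible conditions on $w$ ensure that each segment $\overline{u_j} \in G_1$ (for $u_j \in F_1$) is non-trivial, so that $\overline{w} = \overline{u_1}\cdots \overline{u_N}$ is in reduced form over $G_1 * \langle x\rangle$, and condition (3) of the definition of strictly reducible states that $\overline{w}$ is not a proper power in $\overline{F}$. Consequently $G \cong \overline{F}/\normal{\overline{w}}$ is a one-relator product without proper powers, and the images of the support elements of $\beta$ in $\overline{F}$ are pairwise distinct prefixes $\overline{u_1}\cdots \overline{u_{j-1}}\cdot x^k$ of $\overline{w}$, again by the normal form theorem applied now to $G_1 * \langle x\rangle$.

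The final step is to upgrade distinctness from $\overline{F}$ to $G$. This is precisely the content of Howie's \cite[Corollary 3.4]{Ho82}, the very tool already used in the proof of \cref{nontrivial}: in a one-relator product without proper powers, no two distinct prefixes of the reduced form of the relator become equal in the quotient group. Applying this to our prefixes yields the desired distinctness of $g_1, \ldots, g_n$ in $G$. The main technical point that I would have to verify carefully is matching the Fox-style support elements produced in the first step with the word-theoretic notion of prefix invoked by Howie's corollary; once the three clauses defining a strictly reducible set have been unpacked into the reduced normal form for $\overline{w}$, this identification is immediate, and the rest of the argument parallels the proof of \cref{nontrivial3} in the bireducible setting.
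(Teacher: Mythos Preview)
Your argument is correct and follows exactly the route the paper intends: the paper's proof of this claim is simply ``It is proved in the same way as Claim \ref{nontrivial},'' i.e., compute the support of $\beta$ via the Fox derivative as prefixes of $w$, descend to the one-relator product $G_1*\langle x\rangle/\normal{\overline w}$ using the strictly reducible hypotheses to keep the word reduced, and invoke \cite[Corollary 3.4]{Ho82}. You have carried this out in detail where the paper does not.

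The one point you flag is genuine and worth making explicit. Because the decomposition here is $w-1=\alpha_1+\beta(x-1)$ rather than $\alpha_1+\alpha_2$ with $\alpha_2$ in a full augmentation ideal, the support of $\beta$ consists of prefixes $u_1\cdots u_{j-1}x^k$ that may stop \emph{inside} an $\langle x\rangle$-syllable, so you need Howie's non-triviality result at the edge-path level rather than the syllable level of the free-product normal form. This is precisely how Howie's statement is formulated in \cite{Ho82} (for proper subpaths of the attaching map in the reducible $2$-complex), so the matching you anticipate goes through directly; alternatively, for two fine prefixes lying in the same $\langle x\rangle$-syllable one uses that $\langle x\rangle$ embeds in $G$, and for prefixes at different syllable positions the resulting relation rearranges to a proper subpath of $\overline w$ being trivial. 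Either way no new ingredient is needed beyond what is already used in Claims \ref{nontrivial} and \ref{nontrivial3}.
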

\begin{proof}
It is proved in the same way as Claim \ref{nontrivial}.
\end{proof} 

We will write $\D$ instead of $\D_{K[G]}$.
Let $L$ be a right $K [F]$-module. Then by Lemma \ref{dual},
$$\Tor_1^{K [F]}\left (L,\D\otimes_K  \left (I_{K[F]}/ \left ({}^FI_{K[W]}\right ) \right )\right ) \cong \Tor_1^{K [F]}\left ( I_{K[F]}/I_{K[W]}^F, \D\otimes_K  L^*\right ) .$$
 We have the following exact sequence of right $K[F]$-modules:
 $$0\to K[F]\xrightarrow{\gamma} \left (I_{K[F_1]}^F\right ) /\left (I_{K[W_1]}^F\right ) \oplus K[F]\to  I_{K[F]}/\left (I_{K[W]} ^F\right ) \to 0,$$
 where $\gamma (a) = \left (\alpha_1 a+I_{K[W_1]}^F, \beta a\right)$. Thus, we obtain  the exact sequence
\begin{multline*}
\Tor_1^{K[F]}\left (\left (I_{K[F_1]}^F\right ) /\left (I_{K[W_1]}^F\right ) \oplus  K[F], \D\otimes_K  L^*\right ) 
\to\\  \Tor_1^{K[F]}\left (I_{K[F]}/\left (I_{K[W]} ^F\right )  ,\D\otimes_K  L^*\right ) 
\to  \D\otimes_K  L^*\xrightarrow{\widetilde \gamma} \\ \left ( \left  (\left (I_{K[F_1]}^F\right ) /\left (I_{K[W_1]}^F\right ) \right  ) \otimes _{K[F]}\left  (  \D\otimes_K  L^*\right  ) \right  )
\oplus \left  ( \D\otimes_K  L^*\right  ) ,  \end{multline*}
where $\widetilde \gamma =\left (\widetilde \gamma_1, \widetilde \gamma_2\right ) $ and $\widetilde \gamma_2\left (m\right ) =\beta m$.
 
 By Claim \ref{nontrivial2}, $\beta$ satisfies the condition of  Lemma  \ref{injectiveaction}. Therefore, $\ker \widetilde \gamma =\{0\}$. On the other hand, $ \Tor_1^{K[F]}\left (\left (I_{K[F_1]}^F\right ) /\left (I_{K[W_1]}^F\right ) \oplus  K[F], \D\otimes_K  L^*\right ) $ is trivial by Claim \ref{flatness2}. Thus, the left $K [F]$-module $\D_{K [G] }\otimes_K  \left (I_{K [F]}/\left ({}^FI_{K[W]}\right ) \right ) $ is flat.\end{proof}

\section{Applications}
\label{sect:appl}

In this section we give some applications of our results.

\subsection{Mapping tori of free groups}

The first application we mention is a new proof of the coherence of an ascending HNN-extension of a free group.

\begin{cor}[\cite{FH99}]\label{FHth}
Let $G$ be  an ascending  HNN-extension of a free group. Then $G$ is coherent.
\end{cor}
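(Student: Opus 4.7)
The plan is to combine Theorem \ref{equivalence} with a verification that $G$ is homologically coherent. Let $G$ be an ascending HNN-extension of a finitely generated free group $F$. Then $G$ acts on its Bass--Serre tree $\mathcal{T}$ with vertex stabilisers conjugate to $F$; in particular the vertex stabilisers are free, and hence coherent. By Theorem \ref{equivalence}, coherence of $G$ will follow once we know that $G$ is homologically coherent, i.e. that every finitely generated subgroup of $G$ is of type $\fp_2(\Z)$.

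To establish homological coherence I would invoke Theorem \ref{npi}(2), whose three hypotheses are verified as follows. First, $G$ fits into a short exact sequence $1 \to N \to G \to \Z \to 1$ with $N$ the ascending union of conjugates of $F$ and therefore locally free; being an extension of $\Z$ by a locally indicable group, $G$ is itself locally indicable. Second, an ascending HNN-extension of a finitely generated free group has a two-dimensional presentation complex which is aspherical (the mapping torus of the endomorphism realising the monomorphism on a wedge of circles), so $G$ has cohomological dimension at most two. Third, the vanishing $b_2^{(2)}(G) = 0$ is already contained in the proof of Theorem \ref{mappingtorus}: specialising the Shapiro-lemma computation there to $K = \Q$ and $M = \Q$ gives
\[
\Tor_2^{\Q[G]}\left(\D_{\Q[G]}, \Q\right) \cong \Tor_2^{\Q[N]}\left(\D_{\Q[N]}, \Q\right) = 0,
\]
the second equality because $N$ is locally free. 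Since $b_2^{(2)}(G) = \dim_{\D_{\Q[G]}} \Tor_2^{\Q[G]}(\D_{\Q[G]}, \Q)$, we conclude $b_2^{(2)}(G) = 0$.

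With the three hypotheses of Theorem \ref{npi}(2) in hand, we obtain that $G$ is homologically coherent. Applying Theorem \ref{equivalence} to the Bass--Serre action of $G$ on $\mathcal{T}$, whose vertex stabilisers are free and hence coherent, then yields coherence of $G$. There is no genuine technical obstacle here; the corollary is really just the assembly of the tools already developed in Sections \ref{sect:cohgral} and \ref{sect:homcoh}, with the vanishing of $b_2^{(2)}(G)$ playing the role of the key input. (Alternatively, since $G$ is torsion-free and locally indicable, it satisfies the strong Atiyah conjecture by \cite{JL20}, so Theorem \ref{Atiyah_coherent} could be used in place of Theorem \ref{npi}(2).)
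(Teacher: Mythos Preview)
Your argument is correct and lands in the same place as the paper's: both finish by applying Theorem~\ref{equivalence} once the $\fp_2$ property is known for all finitely generated subgroups. The difference is only in how that property is obtained. The paper quotes the full conclusion of Theorem~\ref{mappingtorus}, namely that $\Q[G]$ is coherent, and then uses the general fact (mentioned in the introduction) that coherence of $\Q[G]$ forces every finitely generated subgroup to be of type $\fp_2(\Q)$. You instead extract from the proof of Theorem~\ref{mappingtorus} only the vanishing $b_2^{(2)}(G)=0$, verify local indicability and cohomological dimension two by hand, and then invoke Theorem~\ref{npi}(2). Both routes are legitimate; the paper's is a bit shorter because it does not re-verify the hypotheses of Theorem~\ref{npi}(2), while yours is more self-contained in that it avoids the implication ``$K[G]$ coherent $\Rightarrow$ $G$ homologically coherent''.

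One small inaccuracy: the displayed isomorphism $\Tor_2^{\Q[G]}(\D_{\Q[G]},\Q)\cong \Tor_2^{\Q[N]}(\D_{\Q[N]},\Q)$ is not literally what Shapiro's lemma yields in the proof of Theorem~\ref{mappingtorus}; Shapiro gives the isomorphism with $\D_{\Q[N]}[t^{\pm1},\tau]$ in place of $\D_{\Q[G]}$, and one then passes to the Ore localisation $\D_{\Q[G]}$ using flatness. The conclusion is unaffected.
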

\begin{proof}
By Theorem  \ref{mappingtorus}, $\Q[G]$ is coherent. Hence all finitely generated subgroups of $G$ are of type $\fp_2(\Q)$. Now the result follows from Theorem \ref{equivalence}.
\end{proof}

\subsection{Right angled Artin groups and Coxeter groups}\label{sec:raags}

Let $\Gamma$ be a simplicial graph, then the {\bf right angled Artin group} (RAAG) $A(\Gamma)$ is the group with presentation
\[
A(\Gamma) = \langle V(\Gamma) \mid [v, w] = 1, \text{ if $(v, w)\in E(\Gamma)$}\rangle.
\]
The classification of coherent right angled Artin groups was carried out by Droms \cite{droms_87}. Despite the simple description of the presentation of a right angled Artin group, their subgroup structure is extremely rich. Indeed, the first examples of groups of type $\fp(\Z)$ that are not finitely presented, due to Bestvina--Brady, are subgroups of RAAGs \cite{bestvina_97}. Although the Bestvina--Brady groups are homologically finitely presented, they must be homologically incoherent.

\begin{pro}
\label{raags}
Let $k$ be a ring and let $G$ be a subgroup of a right angled Artin group. If every finitely generated subgroup of $G$ is of type $\fp_2(k)$, then $G$ is coherent. 
\end{pro}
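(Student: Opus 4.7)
The plan is to reduce the proposition to \cref{hierarchy} by showing that every right angled Artin group (and hence every subgroup of one) belongs to the class $\mathcal{CG}$. Recall that $\mathcal{CG}$ is the smallest class containing all coherent groups that is closed under finite extensions, subgroups, amalgamated free products, HNN-extensions, and directed unions. Once we show $A(\Gamma) \in \mathcal{CG}$ for every simplicial graph $\Gamma$, closure under subgroups gives $G \in \mathcal{CG}$, and since by hypothesis every finitely generated subgroup of $G$ is of type $\fp_2(k)$, \cref{hierarchy} yields the coherence of $G$.

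To prove $A(\Gamma) \in \mathcal{CG}$, I would induct on the number of vertices of $\Gamma$. The base case $|V(\Gamma)| \leqslant 1$ gives $A(\Gamma) = 1$ or $\Z$, which is coherent. For the inductive step, choose a vertex $v \in V(\Gamma)$ with link $L = \mathrm{lk}(v)$ and star $\mathrm{st}(v) = L \cup \{v\}$. Since $v$ commutes with every vertex of $L$, we have $A(\mathrm{st}(v)) \cong A(L) \times \Z$, and from the graph decomposition $\Gamma = (\Gamma - v) \cup_L \mathrm{st}(v)$ we obtain the amalgamated free product
\[
A(\Gamma) \cong A(\Gamma - v) *_{A(L)} \bigl( A(L) \times \Z \bigr).
\]
By induction $A(\Gamma - v), A(L) \in \mathcal{CG}$. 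The direct product $A(L) \times \Z$ can itself be written as the HNN-extension of $A(L)$ whose associated subgroups are both $A(L)$ with the identity identification, so it lies in $\mathcal{CG}$ by closure under HNN-extensions. Closure under amalgamated free products then places $A(\Gamma)$ in $\mathcal{CG}$. If one wishes to handle infinite graphs, closure under directed unions over the finite full subgraphs of $\Gamma$ completes the argument.

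I expect no serious obstacle: the only mildly non-obvious point is the passage through a direct product with $\Z$, which is not among the listed closure operations of $\mathcal{CG}$ but is easily re-expressed as an HNN-extension.
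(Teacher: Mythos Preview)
Your proposal is correct and follows essentially the same approach as the paper. The only cosmetic difference is that the paper writes the splitting directly as an HNN-extension $A(\Gamma)\cong A(\Gamma_v)*_{\psi}$ with $\psi=\mathrm{id}$ on $A(\Lambda_v)$, whereas you pass through the equivalent amalgam $A(\Gamma-v)*_{A(L)}(A(L)\times\Z)$ and then observe that $A(L)\times\Z$ is itself such an HNN-extension; both routes place $A(\Gamma)$ in $\mathcal{CG}$ and then invoke \cref{hierarchy}.
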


\begin{proof}
It suffices to prove the result in the case that $G$ is finitely generated. Hence we may assume that $G\le  A(\Gamma)$ where $\Gamma$ is a finite simplicial graph, not necessarily connected. If $v\in V(\Gamma)$ is a vertex, denote by $\Lambda_v\subset V(\Gamma)$ the subgraph on the set of vertices adjacent to $v$ and by $\Gamma_v\subset \Gamma$ the subgraph on the vertices $V(\Gamma) - \{v\}$. It is clear that $A(\Gamma) \isom A(\Gamma_v)*_{\psi}$ where $\psi$ is the identity isomorphism on $A(\Lambda_v)$. By induction, we see that there is a sequence of subgraphs $\Gamma_0\subset \ldots\subset \Gamma_n = \Gamma$ such that $A(\Gamma_0)\isom \Z$ and such that $A(\Gamma_i)$ splits as a HNN-extension over $A(\Gamma_{i-1})$ for all $i\geqslant 1$. Now the result follows from \cref{hierarchy}.
\end{proof}

If $\Gamma$ is a simplicial graph and $m\colon E(\Gamma)\to \N_{\geqslant 2}$ is a map, the {\bf Coxeter group} $C(\Gamma)$ is the group given by the presentation
\[
C(\Gamma) = \langle V(\Gamma) \mid v_i^2, (v_iv_j)^{m(e)}, e = \{v_i, v_j\}\in E(\Gamma)\rangle.
\]
A Coxeter subgroup of $C(\Gamma)$ is a subgroup generated by a subset of the vertex generators of $C(\Gamma)$. This subgroup will be isomorphic to the Coxeter group on the full subgraph containing these vertices. If $G = C(\Gamma)$, following \cite[Definition 11.25]{Wi20}, define:  
\[
{\chi}(G) = 1 - \frac{|V(\Gamma)|}2 + \sum_{e\in E(\Gamma)}\frac{1}{2m(e)}.
\]
Many Coxeter groups were shown to be incoherent in \cite{jankiewicz_16}. We may apply our techniques to establish coherence of a large subclass of Coxeter groups. This is one direction of \cite[Conjecture 4.7]{jankiewicz_16}  (see also \cite[Conjectures 9.29 \& 11.29]{Wi20}) and is \cref{coxeter} from the introduction.

\begin{teo}
Let $G$ be a Coxeter group and suppose that ${\chi}(H)\leqslant 0$ for each Coxeter subgroup $H\le  G$ generated by at least three elements. Then $G$ is coherent.
\end{teo}

\begin{proof}
By  \cite[Theorem 4.5]{jankiewicz_16} and \cite[Theorems 11.12]{Wi20}, there is a two-complex $X$ with non-positive immersions such that $\pi_1(X)$ is a finite index subgroup of $G$. Hence, $G$ is homologically coherent by \cref{npi}. By work of Haglund--Wise \cite[Corollary 1.3]{haglund_10}, $G$ has a finite index subgroup that is a subgroup of a right angled Artin group. Applying \cref{raags} yields the result.
\end{proof}

\subsection{Groups with staggered presentations}

The aim of this subsection is to prove \cref{staggered_coherent}. 
Since the presentation complex of a staggered presentation is a bireducible complex, \cref{staggered_coherent} is a corollary of the following more general theorem.

\begin{teo}
\label{bired_coherent}
If $X$ is a finite bireducible complex, then $\pi_1(X)$ is coherent.
\end{teo}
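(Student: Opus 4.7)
The plan is to verify both hypotheses of \cref{hierarchy} for $G = \pi_1(X)$: that $G$ lies in the class $\mathcal{CG}$, and that every finitely generated subgroup of $G$ is of type $\fp_2(\Q)$. The strategy parallels that of the proof of \cref{main}(1), with the flat module construction of \cref{key3} replacing \cref{l2betti}, and with the torsion-free quotient $\overline{G} = \pi_1(\overline{X})$ playing the role of the torsion-free finite-index subgroup in the one-relator case.

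First, I would show $G \in \mathcal{CG}$ by induction on the number of two-cells in $X$. The base case (where $X$ is a graph) is immediate. For the inductive step, choose a reducing edge $e \subset X$ attached to a two-cell $\alpha$; removing $e$ and $\alpha$ yields a bireducible complex $Z = X \setminus \{e, \alpha\}$ with strictly fewer two-cells, and $\pi_1(Z) \in \mathcal{CG}$ by induction. Then $\pi_1(X)$ is obtained from $\pi_1(Z) * \Z$ (or from $\pi_1(Z)$, if $e$ is not a loop) by imposing a single relator, which via a Magnus--Moldavanskii style argument can be expressed as an iterated HNN-extension of groups in $\mathcal{CG}$ over finitely generated subgroups; hence $\pi_1(X) \in \mathcal{CG}$.

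Second, I would show that every finitely generated subgroup $H \leq G$ is of type $\fp_2(\Q)$. Applying \cref{key3} with $\Lambda$ a single vertex (so that $\pi_1(\Lambda) = 1$) and $K = \Q$, the left $\Q[G]$-module $\D_{\Q[\overline{G}]} \otimes_\Q I_{\Q[G]}$ is flat; by the symmetric construction, it is also flat as a right $\Q[G]$-module. Imitating the proof of \cref{weak} verbatim, $\D_{\Q[\overline{G}]}$ then has weak dimension at most $1$ as a right $\Q[G]$-module. Since $\overline{X}$ is reducible without proper powers, by \cref{reducible_npi} it has non-positive immersions and hence $\overline{G}$ is homologically coherent by \cref{npi}(1); consequently, the image $\overline{H}$ of $H$ in $\overline{G}$ is of type $\fp_2(\Z)$. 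To lift this to $H$, I would analyze the short exact sequence $1 \to H \cap N \to H \to \overline{H} \to 1$ where $N = \ker(G \to \overline{G})$. By a Newman-type structural result, $N$ is a free product of cyclic groups whose orders are the exponents $n_i$ of the proper power relators $u_i^{n_i}$; the Kurosh subgroup theorem then exhibits $H \cap N$ as a free product of finite cyclic groups and a (possibly trivial) free group. Combined with the vanishing $\Tor_2^{\Q[H]}(\D_{\Q[\overline{G}]}, \Q) = 0$ obtained via Shapiro's lemma from the weak dimension bound above, a Lyndon--Hochschild--Serre spectral sequence argument concludes that $H_2(H; \Q[H])$ is finitely generated as a $\Q[H]$-module, and therefore $H$ is of type $\fp_2(\Q)$.

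With both hypotheses verified, \cref{hierarchy} yields that $G$ is coherent. The main obstacle will be the lifting step: it requires an analog for bireducible complexes of the Newman--Fisher--Karrass--Solitar analysis of torsion in one-relator groups, together with a careful spectral sequence calculation to handle the potential free part of $H \cap N$. The flat module construction of \cref{key3} is tailored to support precisely this kind of analysis.
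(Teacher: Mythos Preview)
Your proposal has a genuine gap in the second step, and the paper takes a fundamentally different route that avoids it.

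The core problem is torsion. As Example~\ref{staggered_example} shows, $\pi_1(X)$ for a bireducible $X$ can have torsion and need not even be virtually torsion-free. When $H\le G$ has torsion, $\Q[H]$ embeds in no division ring, so neither \cref{fp} nor \cref{finite} applies, and $H$ has infinite cohomological dimension. Your workaround---pass to $\overline G=\pi_1(\overline X)$, use the weak-dimension bound for $\D_{\Q[\overline G]}$, and then lift $\fp_2$ along $1\to H\cap N\to H\to\overline H\to 1$ via a spectral sequence---relies on a ``Newman-type'' description of $N=\ker(G\to\overline G)$ as a free product of finite cyclic groups. No such result is available for bireducible complexes (and the non-virtually-torsion-free example already suggests the kernel can be wild). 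Even granting some structure for $N$, it is unclear how the vanishing of $\Tor_2^{\Q[H]}(\D_{\Q[\overline G]},\Q)$ would yield finite presentability of $I_{\Q[H]}$; the spectral-sequence step is doing all the work and is not substantiated.

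The paper sidesteps homological coherence entirely in the inductive step. Using bireducibility one writes
\[
\pi_1(X)\;\cong\;\pi_1(Z)\ast_{\pi_1(\Lambda)}\pi_1(X_\alpha),
\]
where $Z$ is a reduction, $X_\alpha$ is the minimal subcomplex containing the removed two-cell, and $\Lambda=Z\cap X_\alpha$ is small (\cref{injective_sub}). Both vertex groups are coherent by induction (the base case being one-relator groups, i.e.\ \cref{main}(1)). The crucial point is that the Magnus subgroup $\pi_1(\Lambda)$ has the finitely generated intersection property in $\pi_1(X)$ (\cref{Magnus}); this is exactly where \cref{key3} is used, to bound $d(H\cap U)$ for finitely generated $U$. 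Then Karrass--Solitar (\cref{fgip_coherent}) finishes. So the flat module of \cref{key3} is deployed not to prove $\fp_2$, but to prove inertness of the edge group.

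Your first step (membership in $\mathcal{CG}$) is salvageable, but the argument you sketch---a Magnus--Moldavanskii hierarchy for a one-relator quotient of $\pi_1(Z)\ast\Z$---is not standard and would itself need the amalgam decomposition above to justify. Once you have that decomposition, you are already halfway to the paper's proof; the missing ingredient is f.g.i.p.\ of the edge group, not homological coherence.
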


A finitely generated subgroup $H\le  G$ has the {\bf finitely generated intersection property (f.g.i.p.)} if $H\cap K$ is finitely generated for all finitely generated subgroups $K\le  G$. A key ingredient in the proof of \cref{bired_coherent} is the following theorem, due to Karrass--Solitar \cite{KS70,KS71}. See also \cite[Theorem 5.4]{Wi20}.

\begin{teo}
\label{fgip_coherent}
If $G$ splits as a graph of groups whose vertex groups are coherent and whose edge groups have f.g.i.p., then $G$ is coherent.
\end{teo}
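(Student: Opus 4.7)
The plan is to proceed by Bass--Serre theory, reducing to the base cases of amalgamated free products and HNN extensions by induction on the number of edges of the underlying graph. Accordingly, assume $G = A *_C B$ or $G = A *_C$, where $A$ and $B$ are coherent and $C \le G$ has f.g.i.p. Given a finitely generated subgroup $H \le G$, the goal is to show that $H$ is finitely presented.

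First I would let $G$ act on its Bass--Serre tree $\mathcal{T}$ and restrict this action to $H$. Bass--Serre theory then produces a graph of groups decomposition of $H$ over $\mathcal{T}/H$, with vertex stabilizers of the form $H \cap gG_vg^{-1}$ (conjugate into $A$ or $B$) and edge stabilizers of the form $H \cap gCg^{-1}$. Edge stabilizers are automatically finitely generated: writing $H \cap gCg^{-1} = g(g^{-1}Hg \cap C)g^{-1}$, this is finitely generated because $g^{-1}Hg$ is finitely generated and $C$ has f.g.i.p.

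The hard part will be to show that $\mathcal{T}/H$ is a finite graph and that the vertex stabilizers of the $H$-action are finitely generated. My plan here is a classical Bass--Serre / Karrass--Solitar style argument: choose a basepoint $v_0 \in \mathcal{T}$ and a finite generating set $\{h_1, \ldots, h_n\}$ of $H$, and form the convex hull $T_0 \subseteq \mathcal{T}$ of $\{v_0, h_1 v_0, \ldots, h_n v_0\}$. The $H$-translates of $T_0$ cover the minimal $H$-invariant subtree, and using the finite generation of the edge stabilizers (from f.g.i.p.) along the edges where distinct translates meet, one should be able to show that only finitely many orbits of edges arise and that each vertex stabilizer is generated by finitely many of the $h_i$'s together with finite generating sets of its incident edge stabilizers.

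Once this finiteness step is in hand, each vertex group of the splitting of $H$ is a finitely generated subgroup of a conjugate of $A$ or $B$, hence finitely presented by coherence. A finite graph of finitely presented groups over finitely generated edge groups is itself finitely presented, so $H$ is finitely presented.
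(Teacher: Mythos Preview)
The paper does not prove this statement; it is quoted as a known result of Karrass--Solitar \cite{KS70,KS71} (see also \cite[Theorem 5.4]{Wi20}). Your outline is the standard Bass--Serre reformulation of their argument, and the overall strategy is sound.

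Two points deserve tightening. First, cocompactness of the $H$-action on its minimal invariant subtree follows already from finite generation of $H$ and does not need the f.g.i.p.\ hypothesis; this is an elementary fact about actions on trees, so it should not be listed as part of the ``hard part''. Second, your justification that vertex stabilisers are finitely generated is imprecise as written: the generators $h_i$ of $H$ need not lie in any given vertex stabiliser, so saying the stabiliser is ``generated by finitely many of the $h_i$'s together with\ldots'' is not quite right. A clean way to fill this in is to quotient $H$ by the normal closure of its (finitely many, finitely generated) edge stabilisers; the quotient is the fundamental group of the same finite graph with trivial edge groups, hence a free product of the images of the vertex groups with a finitely generated free group. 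Being a quotient of $H$ it is finitely generated, so each free factor is finitely generated, and therefore each vertex stabiliser is finitely generated (being an extension of a finitely generated group by finitely many finitely generated edge groups). With this adjustment your argument goes through exactly as you describe.
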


In order to apply this theorem to bireducible two-complexes, we need to show that Magnus subgroups have f.g.i.p.

\begin{teo}\label{Magnus}
Magnus subgroups of fundamental groups of finite bireducible complexes have f.g.i.p.
\end{teo}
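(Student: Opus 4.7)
The proof proceeds by induction on the number of two-cells of $X$. The base case encompasses $X$ with at most one two-cell: when $X$ is a graph, $G = \pi_1(X)$ is free and the conclusion is Howson's theorem; when $X$ has a single two-cell, $G$ is a one-relator group and the f.g.i.p. for Magnus subgroups is a classical result.

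For the inductive step with $X$ containing at least two two-cells, bireducibility provides two reducing edges associated to distinct two-cells, yielding reductions $Y, Z \subset X$ with $X = Y \cup Z$ and $Y \cap Z$ a common reduction of both. After attaching a connecting edge if needed, this gives (via \cref{injective_sub}) the amalgamated decomposition $G \cong \pi_1(Y) *_{\pi_1(Y\cap Z)} \pi_1(Z)$. Since $\Lambda$ is small in $X$, we can arrange $\Lambda \subseteq Z$, placing $H$ inside $\pi_1(Z)$; moreover, $\Lambda$ remains small in $Z$ and $Z$ is bireducible with strictly fewer two-cells, so the inductive hypothesis applies to the pair $(\pi_1(Z), H)$.

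Given a finitely generated $K \le G$, note that $K \cap H = H \cap (K \cap \pi_1(Z))$, so if $K \cap \pi_1(Z)$ is finitely generated then applying the inductive hypothesis to $(\pi_1(Z), H)$ with subgroup $K \cap \pi_1(Z)$ yields $K \cap H$ finitely generated. To obtain finite generation of $K \cap \pi_1(Z)$, the plan is to show $K$ is of type $\fp_2(\Q)$ and then invoke \cref{fp2_action} to produce a $K$-cocompact tree $\mathcal{S}$ dominating the Bass--Serre tree $\mathcal{T}$ of the splitting, with finitely generated vertex and edge stabilisers. Standard Bass--Serre theory, applied to the preimage in $\mathcal{S}$ of the vertex corresponding to $\pi_1(Z) \in \mathcal{T}$, then exhibits $K \cap \pi_1(Z)$ as the fundamental group of a finite graph of finitely generated groups, and hence as finitely generated.

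The main obstacle is establishing the $\fp_2(\Q)$ property for arbitrary finitely generated $K \le G$. By \cref{ntpi}, $X$ has NTPI, and bireducibility implies reducibility, since any subcomplex containing a single two-cell automatically admits a reducing edge. Hence the torsion-free quotient $\overline{X}$ is reducible without proper powers by \cref{proper_powers}, so \cref{weak} applies and $\D_{\Q[\overline{G}]}$ has weak dimension at most one. A Shapiro-lemma argument in the spirit of \cref{vanishing2nd} and \cref{vanishing2ndL2} then yields $b_2^{(2)}(\overline{K}) = 0$ for every subgroup $\overline{K} \le \overline{G}$. Transferring this vanishing from $\overline{K}$ to $K$ in the presence of torsion coming from proper powers, in order to conclude $\dim_{\D_{\Q[\overline{K}]}} \Tor_2^{\Q[K]}(\D_{\Q[\overline{K}]}, \Q) < \infty$ and thence apply \cref{finite}, is the delicate technical step, requiring careful analysis of the kernel of $G \twoheadrightarrow \overline{G}$ together with the flat module structure of \cref{key3}.
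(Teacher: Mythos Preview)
Your inductive scheme is logically sound in outline, but the proof has a genuine gap at the step you yourself flag as ``delicate'': you never establish that an arbitrary finitely generated $K\le G$ is of type $\fp_2(\Q)$. Your proposed route is to apply \cref{finite}, but that corollary requires both that $K$ have cohomological dimension at most two and that $\Q[K]$ embed in a division ring. When $X$ has proper powers, $G$ has torsion, and so may $K$; then $K$ has infinite cohomological dimension and $\Q[K]$ admits no embedding into a division ring, so \cref{finite} is simply unavailable. The object $\D_{\Q[\overline K]}$ you invoke is a division ring for the torsion-free quotient $\overline K$, not for $K$, and there is no map $\Q[K]\hookrightarrow \D_{\Q[\overline K]}$. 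Nor can you pass to a torsion-free finite-index subgroup: Example~\ref{staggered_example} exhibits a bireducible complex whose fundamental group is not virtually torsion-free. So the ``careful analysis'' you allude to would have to proceed by some entirely different mechanism, and as written the proof is incomplete. (Your base case is also not innocuous: the f.g.i.p.\ for Magnus subgroups of one-relator groups is not a classical fact you can cite, but rather a special case of the theorem under consideration.)

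The paper gives two proofs, both of which avoid this obstacle by never needing $\fp_2$ for subgroups of $G$. The first proof uses \cref{key3} directly: a short chain of $\Tor$ inequalities over $\D_{\Q[\overline G]}$ yields $d(H\cap U)\le d(U)$ for every finitely generated $U\le G$, which is the stronger statement that $H$ is inert. The second proof is more geometric: one doubles $X$ along the small subgraph $\Lambda$ to get a new bireducible complex $Y=X\cup_\Lambda X'$, so that for any finitely generated $U\le \pi_1(X)$ the group $U\ast_{U\cap H} U'$ sits inside $\pi_1(Y)$; then NTPI for $Y$ (\cref{ntpi}) together with \cref{2betti} and the Mayer--Vietoris sequence forces $b_1(U\cap H)<\infty$, hence $U\cap H$ is finitely generated. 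Both arguments are one-shot rather than inductive, and neither requires anything about cohomological dimension of subgroups.
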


We will give two proofs of this theorem. The first proof uses Theorem \ref{key3} and the second uses the fact that bireducible complexes have NTPI.

\begin{proof}[The first proof of Theorem \ref{Magnus}]
Let $H$ be a Magnus subgroup of the fundamental group of  a bireducible two-complex $X$. Let $U$ be a finitely generated subgroup of $G=\pi_1(X)$. For an arbitrary subgroup $K$ of $G$ let $$
\overline d(K)=\left \{ \begin{array}{ cc}0 & \textrm{if\ }  K=\{1\} \\ d(K)-1	& \textrm{if\ }  K\not =\{1\} \end{array}\right ..$$
We aim to show that
\begin{equation}\label{stronginert}
\sum_{g \in U \backslash G / H} \overline{d}(H \cap U^g) \leqslant \overline{d}(U).
\end{equation}
In particular, this implies that $d(H\cap U) \leqslant  d(U)$. 

As in Subsection \ref{sub:Magnus} we put $\overline G=\pi_1(\overline X)$ and $\D=\D_{\Q[\overline G]}$. Denote by $N$ the kernel of the canonical map $G\to \overline G$. 
 Observe that  $N\cap H=\{1\}$. Thus, the inequality (\ref{stronginert}) holds if $U\le N$. From now on, we will assume that $U\not \le N$.

For any subgroup $K$ of $G$ 
let $$\delta_K=\left \{ \begin{array}{ cc} 1 & \textrm{if\ }  K\le N\\0 & \textrm{if\ }  K\not \le N\end{array}\right . .$$
Taking into account the exact sequence
$$0\to  \Tor_1^{\Q[G]}(\D, \Q[G/ K ])\to \D\otimes_{\Q[K]} I_{\Q[K]}\to \D\otimes_{\Q[G]} {\Q[G]}\to  \D\otimes_{\Q[G]}  \Q[G/ K ]\to 0,$$
we obtain the following bound on the minimal number of generators of $K$:
\begin{multline*}
\overline d(K) \geqslant  \dim _{\D} \left ( \D\otimes_{\Q[K]} I_{\Q[K]} \right)  -1=\\  \dim _{\D}  \Tor_1^{\Q[G]}(\D, \Q[G/ K ])-\dim _{\D} \left( \D\otimes_{\Q[G]}  \Q[G/ K ]\right )  =\\ \dim _{\D}  \Tor_1^{\Q[G]}(\D, \Q[G/ K ])-\delta_K,
\end{multline*}
and we have the equality if $K$ is free and non-trivial.
Since $H$ is free and $N\cap H=\{1\}$, we have
$$\overline d(H\cap U^g) = \dim _{\D} \Tor_1^{\Q[G]}(\D, \Q[G/\left (H\cap U^g\right) ]) \textrm{\ for every\ }g\in G.$$
  Observe that we have the following isomorphism of $\Q[G]$-modules:
 $$\Q[G/H]\otimes_\Q \Q[G/U]\cong \oplus_{g\in U\backslash G/H}\Q[G/(H\cap U^g)].$$
  Therefore,  we obtain  that 
$$\sum_{g\in U\backslash G/H}\dim _{\D} \Tor_1^{\Q[G]}(\D, \Q[G/\left (H\cap U^g\right) ])= \dim_{\D} \Tor_1^{\Q[G]} (\D, \Q[G/H]\otimes_\Q \Q[G/U]).$$
On the other hand, we have the following  exact sequence
\begin{multline*}\Tor_1 ^{\Q[G]}(\D,  (I_{\Q[G]}/{}^GI_{\Q[H]})\otimes_{\Q}\Q[G/U])\to \\ \Tor_1^{\Q[G]} (\D, \Q[G/H]\otimes_\Q \Q[G/U])\to \Tor_1 ^{\Q[G]}(\D,  \Q[G/U]).\end{multline*}
By Lemma \ref{dual} and Theorem \ref{key3}, $\Tor_1 ^{\Q[G]}(\D,  (I_{\Q[G]}/{}^GI_{\Q[H]})\otimes_{\Q}\Q[G/U])=0$. Thus,
$$\sum_{g\in U\backslash G/H} \overline d(H\cap U^g)  \leqslant     \dim_{\D} \Tor_1 ^{\Q[G]}(\D,  \Q[G/U])  \leqslant   \overline d(U)+\delta_U=\overline d(U). $$
This proves (\ref{stronginert}).
\end{proof}

\begin{rem*}
If $G$ is a group and $H\le  G$ is a subgroup, say $H$ is {\bf inert} ({\bf strongly inert}) in $G$ if $d(K\cap H)\leqslant d(K)$  ($\sum_{g\in K\backslash G/H} \overline d(H\cap K^g)  \leqslant   \overline d(K)$  ) for every subgroup $K\le  G$. Inert subgroups were first defined by Dicks--Ventura in the context of free groups \cite{DV96}. The above proof of \cref{Magnus} yields a stronger result: Magnus subgroups of fundamental groups of finite bireducible two-complexes are inert and strongly inert. In particular, Magnus subgroups of one-relator groups are inert and strongly inert.
\end{rem*}

\begin{proof}[The second  proof of Theorem \ref{Magnus}]  
Let $M=\pi_1(\Lambda)$ be a Magnus subgroup of the fundamental group of a bireducible two-complex $X$, where $\Lambda$ is connected and small.
We claim that
\[
Y = X\cup_{\Lambda}{X^\prime}
\]
is bireducible, where ${X^\prime}$ is an isomorphic copy of $X$. Indeed let $U\subset Y$ be a subcomplex which we may assume contains at least one two-cell in $X$ and at least one two-cell in ${X^\prime}$. By assumption, there exists a reduction of $(U\cap X)\cup \Lambda$ containing $\Lambda$  and so there is a reducing edge in $(U\cap X) - \Lambda$. Similarly for $U\cap {X^\prime}$. Hence $Y$ is bireducible. 

If $H\le \pi_1(X)$ is a finitely generated subgroup, then we have
\[
G = H*_{H\cap M}{H^\prime}\le \pi_1(Y)
\]
where ${H^\prime}$ is an isomorphic copy of $H$. Since $G$ is finitely generated, we see that $b_1(G)<\infty$. Applying the Mayer--Vietoris sequence due to Swan \cite[Theorem 2.3]{swan_69} to the amalgamated free product decomposition of $G$, we see that $b_1(H\cap U)<\infty$ by \cref{2betti}. As $M$ is free, $H\cap M$ is free also and so must be finitely generated. We have proved that $M$ has the finitely generated intersection property in $\pi_1(X)$.  
\end{proof}

\begin{proof}[Proof of Theorem  \ref{bired_coherent}]The proof is by induction on the number of two-cells in $X$. The base case is \cref{main}(1). Now suppose that $X$ contains at least two two-cells and assume the inductive hypothesis.

Let $Z\subset X$ be a reduction and let $\alpha$ be the two-cell in $X-Z$. Denote by $X_{\alpha}\subset X$ the smallest subcomplex containing $\alpha$. We have
\[
X = Z\cup_{\Lambda}X_{\alpha}
\]
where $\Lambda = Z\cap X_{\alpha}$. Since $X$ is bireducible, $\Lambda$ is small. We may add edges to $X$ and $X_{\alpha}$ to ensure that $Z$ and $\Lambda$ are connected as the resulting two-complex will have coherent fundamental group if and only if the original one does. \cref{injective_sub} tells us that $\pi_1(\Lambda)$, $\pi_1(Z)$ and $\pi_1(X_{\alpha})$ are subgroups of $\pi_1(X)$. In particular, we have
\[
\pi_1(X) \isom \pi_1(Z)*_{\pi_1(\Lambda)}\pi_1(X_{\alpha}).
\]
In Theorem \ref{Magnus}, we have proved that $\pi_1(\Lambda)$ has the finitely generated intersection property in $\pi_1(X)$. Since $\pi_1(Z)$ and $\pi_1(X_{\alpha})$ are coherent by induction, we apply Theorem \ref{fgip_coherent} to obtain the result.
\end{proof}

\begin{proof}[Proof of \cref{staggered_coherent}]
Let $G$ be a group with a staggered presentation. If $G$ is finitely presented, the result follows from \cref{bired_coherent}. Now suppose that $G$ has an infinite staggered presentation 
\[
\langle S, \ldots, x_{-1}, x_0, x_1, \ldots \mid \ldots, r_{-1}, r_0, r_1, \ldots\rangle
\]
where the ordering of the generators is given by their indexing and $S$ is the set of unordered generators. Denote by $m_i$ and $M_i$ the smallest and largest integers such that $r_i$ mentions $x_{m_i}$ and $x_{M_i}$ respectively. Let $H_i = \langle S, x_{m_i}, \ldots, x_{M_i} \mid r_i\rangle$ and $A_i = F(S, x_{m_{i+1}}, \ldots, x_{M_i})$. By the Freiheitssatz \cite{magnus_30} we see that
\[
G \isom \ldots \underset{A_{i-1}}{*}H_i\underset{A_i}{*}H_{i+1}\underset{A_{i+1}}{*}\ldots
\]
If $H\le  G$ is a finitely generated subgroup, there exist integers $i\le  j$ such that
\[
H\le  H_i\underset{A_i}{*}\ldots\underset{A_{j-1}}{*}H_j\le  G
\]
In particular, $H$ is a finitely generated subgroup of the fundamental group of a bireducible complex with finitely many two-cells. Now \cref{bired_coherent} finishes the proof.
\end{proof}

We close this section with an example of a finitely generated group with a staggered presentation which is not virtually torsion-free, demonstrating that we could not simply appeal to \cref{npi} in the proof of \cref{staggered_coherent}.

\begin{exa}\label{staggered_example}
Consider the following group known as the Baumslag--Gersten group:
\[
G = \langle a, t \mid [a^t,a^{-1}] = a\rangle.
\]
Baumslag proved that every finite quotient of $G$ is finite cyclic \cite{baumslag_69}. Hence, if $G\to H$ is a homomorphism to a finite group, $a$ is in the kernel as $a\in [G, G]$. So now consider the following example of a group with a staggered presentation:
\[
K = \langle a, t \mid [a^t,a^{-1}] = a\rangle *_{\langle a\rangle = \langle b\rangle}\langle b, c \mid [b, c]^2\rangle
\]
Let $\phi\colon K\to H$ be a homomorphism with $H$ finite. By the above, we see that $a\in \ker(\phi)$ and thus $b\in \ker(\phi)$. But this implies that $\phi([b, c]) = 1$ and so $\ker(\phi)$ has elements of finite order. Since $\phi$ was arbitrary, this shows that $K$ is not virtually torsion-free.
\end{exa}

\subsection{An alternative proof of the rank one Hanna Neumann conjecture}

Given two finitely generated subgroups $U$ and $W$ of a free group $F$  the Friedman--Mineyev theorem  \cite{Fr14, Mi12} (previously known as the Strengthened  Hanna Neumann conjecture)  states  that
$$\sum_{x\in W \backslash  F/U} \overline d \left (xUx^{-1} \cap W\right ) \leqslant \overline d\left (U\right )  \overline d\left (W\right ) .$$ 
This result   says nothing about the cyclic intersections  $xUx^{-1} \cap W$.  Wise proposed a rank-1 version of this conjecture, which was proved independently by Helfer and Wise \cite{HW16} and Louder and Wilton \cite{LW17}.

\begin{teo}[Helfer-Wise, Louder-Wilton]  \label{1rank} Let $U$ be a subgroup of a free group $F$,  $w\in F$ an element that is not a proper power and $W=\langle w\rangle$. Then
$$\sum_{x\in W \backslash  F/U}  d \left (xUx^{-1} \cap W\right ) \leqslant 
  \left \{
   \begin{array}{cc} 
   d\left (U\right )  & \textrm{if\ }  U\le \normal{W}\\
 d\left (U\right ) -1 & \textrm{if\ } U\not \le \normal{W} \end{array}\right . .
$$ 
\end{teo}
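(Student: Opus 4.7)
The plan is to express both sides of the inequality as $\D$-dimensions of certain $\Tor$-groups and to deduce the bound from the flatness result of \cref{key2}. Set $K=\Q$ and $G = F/\normal{W}$; since $W$ is strictly reducible in $F$, the group $G$ is the fundamental group of a finite reducible two-complex without proper powers, hence locally indicable, so $\D := \D_{K[G]}$ exists. Let $N = \normal{W}$ and write $\delta_H = 1$ if $H \le N$ and $\delta_H = 0$ otherwise. The inequality is vacuous when $d(U) = \infty$, so I assume that $U$ is finitely generated. By \cref{key2}, the left $K[F]$-module $\D \otimes_K (I_{K[F]}/{}^F I_{K[W]})$ is flat.

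The key step is to tensor the augmentation sequence of the permutation module $K[F/W]$ with $\D$ over $K$, yielding a short exact sequence of left $K[F]$-modules
\[
0 \to \D \otimes_K (I_{K[F]}/{}^F I_{K[W]}) \to \D \otimes_K K[F/W] \to \D \to 0
\]
whose leftmost term is flat. Applying $\Tor^{K[F]}_*(K[U\backslash F], -)$ and reading off the long exact sequence then gives an injection
\[
\Tor^{K[F]}_1(K[U\backslash F], \D \otimes_K K[F/W]) \hookrightarrow \Tor^{K[F]}_1(K[U\backslash F], \D)
\]
of right $\D$-modules, and the idea is to identify the two sides with those of the inequality.

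For the right-hand side, Shapiro's lemma gives $\Tor^{K[F]}_1(K[U\backslash F], \D) \cong H_1(U; \D|_U)$; since $U$ is free, the standard length-one free resolution of the trivial $K[U]$-module together with the fact that $\overline u - 1 \in \D$ is zero when $\overline u = 1$ in $G$ and a unit otherwise yields $\dim_\D H_1(U; \D|_U) = d(U) - 1 + \delta_U$, which is precisely the right-hand side of \cref{HN}. For the left-hand side, the projection formula produces the isomorphism $\D \otimes_K K[F/W] \cong K[F] \otimes_{K[W]} \D|_W$, so Shapiro's lemma combined with the decomposition of $K[U\backslash F]|_W$ into right $W$-orbits, indexed by $U\backslash F/W$ with stabilisers $W \cap x^{-1}Ux$, gives
\[
\Tor^{K[F]}_1(K[U\backslash F], \D \otimes_K K[F/W]) \cong \bigoplus_{x \in U\backslash F/W} H_1(W \cap x^{-1}Ux;\, \D).
\]
Each intersection $W \cap x^{-1}Ux$ lies in $W \le N$, so the previous calculation forces each summand to have $\D$-dimension $d(W \cap x^{-1}Ux)$, and reindexing via $x \mapsto x^{-1}$ identifies the total with $\sum_{y \in W\backslash F/U} d(yUy^{-1} \cap W)$. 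The main obstacle that I anticipate is the careful bookkeeping of left/right and bimodule structures, ensuring that the various $\Tor$-groups acquire canonical right $\D$-module structures whose $\D$-dimensions encode the desired combinatorial invariants; once this is done, \cref{HN} follows immediately from the injection above.
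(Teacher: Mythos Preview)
Your proof is correct and follows essentially the same route as the paper's proof of \cref{1rank_2} (of which \cref{1rank} is the special case $L={}^FI_{\Q[U]}$). Both arguments use the flatness result \cref{key2} together with the short exact sequence coming from the augmentation of $K[F/W]$, and identify the two sides of the inequality via Shapiro's lemma and the ``projection formula'' (which is exactly Claim~\ref{claim:iso} in the paper). The only cosmetic difference is the direction in which $\Tor$ is taken: the paper applies $\Tor^{K[F]}_*(\D,-)$ to the sequence tensored with $K[F/U]$ and then invokes \cref{dual} to reduce the vanishing of the first term to \cref{key2}; you instead apply $\Tor^{K[F]}_*(K[U\backslash F],-)$ to the sequence tensored with $\D$, so that \cref{key2} gives the vanishing directly. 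The two presentations are interchangeable via \cref{dual}.
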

In this section we will prove a generalisation of this result.  In order to formulate it, we need another interpretation for the sum $\sum_{x\in W \backslash  F/U}  d (xUx^{-1} \cap W)$. We follow the approach developed in \cite{Ja17, AJ22}. Let $W$ be an arbitrary subgroup of $F$.
Consider $\Q[F/U]$ as a left $\Q[W]$-module. Then
$$\Q[F/U]\cong \bigoplus_{x\in W \backslash  F/U}  \Q[W/(xUx^{-1}\cap W)].$$
The exact sequence
\[
0 \to {}^WI_{\Q[xUx^{-1}\cap W]}\to \Q[W] \to \Q[W/(xUx^{-1}\cap W)]\to 0
\]
yields a long exact sequence with extract
\[
\Tor_1^{\Q[W]}(\Q, \Q[W]) \to \Tor_1^{\Q[W]}(\Q, \Q[W/(xUx^{-1}\cap W)]) \to \Q\otimes_{\Q}{}^WI_{\Q[xUx^{-1}\cap W]}\to \Q\otimes_{\Q}\Q[W].
\]
Since $\Tor_1^{\Q[W]}(\Q, \Q[W]) = 0$ and the map on the right is the zero map, this implies that
\[
d(xUx^{-1}\cap W)= \dim_{\Q}(\Q\otimes_{\Q}{}^WI_{\Q[xUx^{-1}\cap W]}) =\dim_{\Q}  \Tor_1^{\Q[W]}(\Q,\Q[W/(xUx^{-1}\cap W)]).
\]
Therefore, the sum   that appears in Theorem \ref{1rank} has the following interpretation
$$ \sum_{x\in W \backslash  F/U}  d (xUx^{-1} \cap W)=\dim_{\Q}  \Tor_1^{\Q[W]}(\Q, \Q[F/U]).$$

If $F$ is a free group, all left ideals of ${\Q} [F]$ are left free ${\Q} [F]$-modules of a unique rank. We denote by $\rk  (L) $ the rank of a free ${\Q} [F]$-module $L$.
Notice that $$\Q[F/U]\cong \Q[F]/\left ({}^FI_{\Q[U]}\right )\textrm{\ and\ } \rk \left ({}^FI_{ \Q[U]}\right ) =d\left (U\right ) .$$
Thus, our next  result can be viewed as a generalization of Theorem \ref{1rank} and Theorem \ref{HN}.  

\begin{teo}\label{1rank_2}
Let $F$ be a free group, $W$ a strictly reducible  subgroup of $F$, $I$ the ideal of ${\Q} [F]$ generated by $\{w-1\colon w\in W\}$ and $L$ a left ideal of ${\Q} [F]$. Then  we have the following inequality
$$\dim_{{\Q} } \Tor_1^{\Q[W]}(\Q,\Q[F]/L)  \leqslant    \left \{
   \begin{array}{cc} 
 \rk (L)  & \textrm{if\ } L\le I\\
  \rk (L )-1  & \textrm{if\ } L\not \le I  \end{array}\right . .
$$ 
\end{teo}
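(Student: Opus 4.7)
Since $G = F/\normal{W}$ is the fundamental group of a finite reducible two-complex without proper powers, it is locally indicable by \cref{reducible_npi} and \cref{npi_indicable}, so $\D = \D_{\Q[G]}$ exists by \cite{JL20}. My plan is to exploit the fact that $W$ maps trivially to $G$, so that it acts trivially on $\D$, in order to convert the $\Q$-dimension estimate into a $\D$-dimension estimate to which \cref{key2} can be applied. Resolving any left $\Q[W]$-module $M$ by free $\Q[W]$-modules and tensoring on the left with $\D$ (whose differentials then factor through the augmentation $\Q[W]\to\Q$) will give the identification
$$\Tor_*^{\Q[W]}(\D, M) \cong \D \otimes_\Q \Tor_*^{\Q[W]}(\Q, M),$$
and in particular $\dim_\D \Tor_1^{\Q[W]}(\D, \Q[F]/L) = \dim_\Q \Tor_1^{\Q[W]}(\Q, \Q[F]/L)$.

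Shapiro's lemma combined with the trivial $W$-action on $\D$ then yields
$$\Tor_1^{\Q[W]}(\D, \Q[F]/L) \cong \Tor_1^{\Q[F]}(\D \otimes_\Q \Q[W\backslash F], \Q[F]/L).$$
I would then apply $\Tor^{\Q[F]}(-,\Q[F]/L)$ to the short exact sequence of right $\Q[F]$-modules
$$0 \to \D \otimes_\Q (I_{\Q[F]}/I_{\Q[W]}^F) \to \D \otimes_\Q \Q[W \backslash F] \to \D \to 0$$
obtained by tensoring $0 \to I_{\Q[F]}/I_{\Q[W]}^F \to \Q[W\backslash F] \to \Q \to 0$ with $\D \otimes_\Q -$, which is exact since $\Q$ is a field. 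The right-module analogue of \cref{key2}---proved by transposing its proof, exactly as invoked in the proof of \cref{weak}---tells me that the leftmost module is a flat right $\Q[F]$-module, so the resulting long exact sequence produces an injection
$$\Tor_1^{\Q[F]}(\D \otimes_\Q \Q[W\backslash F], \Q[F]/L) \hookrightarrow \Tor_1^{\Q[F]}(\D, \Q[F]/L).$$

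To conclude, I would compute the right-hand side directly: tensoring $0 \to L \to \Q[F] \to \Q[F]/L \to 0$ with $\D$ over $\Q[F]$, and using that every left ideal of $\Q[F]$ is free of well-defined rank as recalled before the theorem statement, identifies $\Tor_1^{\Q[F]}(\D, \Q[F]/L)$ with the kernel of a left-$\D$-linear map $\D^{\rk(L)} \to \D$. Because $\D$ is Linnell by \cref{grater}, the composition $\Q[F] \to \Q[G] \hookrightarrow \D$ has kernel exactly $I$, so this map vanishes precisely when $L \leq I$. In this case the kernel has $\D$-dimension $\rk(L)$; otherwise the image is all of $\D$ and the kernel has $\D$-dimension $\brk(L)$, which combined with the chain of (in)equalities of the preceding paragraphs yields the desired bound. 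The one point of care is verifying the right-module version of \cref{key2}; I expect this to be a routine transposition of the original proof requiring no new ideas.
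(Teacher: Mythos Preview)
Your proposal is correct and follows essentially the same strategy as the paper's own proof: pass from $\Q$- to $\D$-dimensions using the trivial $W$-action on $\D$, apply Shapiro, split via the short exact sequence for $\Q[W\backslash F]$, kill one summand by the flatness result \cref{key2}, and compute the remaining $\Tor_1^{\Q[F]}(\D,\Q[F]/L)$ directly from the free resolution $0\to L\to\Q[F]$. The only cosmetic difference is that the paper applies Shapiro in the second (left-module) variable and then uses \cref{dual} to reach the left-module statement of \cref{key2}, whereas you apply Shapiro in the first variable and invoke the right-module transpose of \cref{key2}; the paper also isolates the identification of the induced module with the diagonal tensor product as a separate claim, which you have folded into the Shapiro step.
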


\begin{proof}
Let us put $G=F/\normal{W}$, $\D=\D_{{\Q} [G]}$ and $M={\Q} [F]/L$.
\begin{claim}\label{claim:iso}
The induced  left $\Q[F]$-module $\Q[F]\otimes_{\Q[W]} M$ is isomorphic to the left module $\Q[F/W]\otimes_{\Q} M$.
\end{claim}
\begin{proof} Let $T$ be a left transversal of $F$ with respect to $W$.
Define  a $\Q$-linear map $\tau: \Q[F]\otimes_{\Q[W]} M\to \Q[F/W]\otimes_{\Q}M$ by $$ \ \tau: t\otimes  m\mapsto tW\otimes  tm\ (t\in T,  m\in M).$$  It is bijective because we can define the inverse map by $\tau^{-1}(tW\otimes m)=t\otimes t^{-1}m$.

If $f\in F$ and $t\in T$, then there are $t^\prime \in T$ and $w\in W$ such that $ft=t^\prime w$. Thus, we obtain
\begin{align*}\tau(f(t\otimes m))=\tau(ft\otimes m)=\tau(t^\prime w\otimes m)=\tau (t^\prime \otimes wm)=\\ t^\prime W \otimes t^\prime w m=ftW\otimes ft m=f\left ( tW\otimes tm\right )=f\tau(t\otimes m).\end{align*}
Therefore, $\tau$ is also a $\Q[F]$-homomorphism.
\end{proof}

Note that 
\begin{multline*}
 \dim_{{\Q} } \Tor^{{\Q} [W]}_1({\Q} ,M)= \dim_{\D}\Tor^{{\Q} [W]}_1(\D,M)\myeq{Shapiro's lemma}\\
    \dim_{\D} \Tor_1^{{\Q} [F]}(\D,{\Q} [F]\otimes_{\Q[W]}  M)\myeq{Claim \ref{claim:iso}}\dim_{\D} \Tor_1^{{\Q} [F]}(\D,{\Q} [F/W]\otimes_{\Q}  M).\end{multline*}
Taking into account the exact sequence of left ${\Q} [F]$-modules
$$0\to \left (I_{{\Q} [F]}/\left ({}^FI_{\Q[W]}\right )\right )\otimes_{\Q}  M \to {\Q} [F/W]\otimes_{\Q}  M\to M \to 0,$$
we obtain that
\begin{multline}\label{sum}
 \dim_{\D} \Tor_1^{{\Q} [F]}\left (\D,{\Q} [F/W]\otimes_{\Q}  M \right ) \leqslant \\
 \dim_{\D} \Tor_1^{{\Q} [F]} \left (\D,  \left (I_{{\Q} [F]}/\left ({}^FI_{\Q[W]}\right )\right )\otimes_{\Q}  M\right )+ \dim_{\D} \Tor_1^{{\Q} [F]}(\D, M).\end{multline}
Observe that      if we see $\D$ as a right ${\Q} [F]$-module, $\D^*\cong \D$. Thus, from Lemma \ref{dual} and Theorem \ref{key2}, we obtain that 
\begin{multline}\label{vanish}
 \Tor_1^{{\Q} [F]}\left (\D,  \left (I_{{\Q} [F]}/\left ({}^FI_{\Q[W]}\right )\right )\otimes_{\Q}  M\right ) = \\
 \Tor_1^{{\Q} [F]} \left ( M^*, \D\otimes_{\Q}   \left (I_{{\Q} [F]}/\left ({}^FI_{\Q[W]}\right )\right)\right )=\{0\}.\end{multline}
Since $\D\otimes_{\Q}M = 0$ if and only if $L\not\le I$, the second summand in (\ref{sum}) we have
that
\begin{multline*} \dim_{\D} \Tor_1^{{\Q} [F]}(\D, M)= \rk(L)-1+\dim_{\D}  \Tor_0^{{\Q} [F]}(\D, M) =\\
 \rk(L)-1+\left \{
   \begin{array}{cc} 
1& \textrm{if\ } L\le I\\
0 & \textrm{if\ } L\not \le I  \end{array}\right .=
\left \{
   \begin{array}{cc} 
 \rk (L) & \textrm{if\ } L\le I\\
  \rk(L)-1 & \textrm{if\ } L\not \le I  \end{array}\right . .
\end{multline*}
This finishes the proof of  the theorem.
\end{proof}

\section{Further comments and open questions} \label{sect:final}

In \cite[Sections 16 \& 17]{Wi20} Wise studied the connection between the non-positive immersions property and the vanishing of the second $L^2$-Betti number of a two-complex (as Wise indicated it was a suggestion of Gromov). Theorem \ref{npi} points again in the same direction. We want to go further and ask the following question.

\begin{Question} Is it true that a compact aspherical two-complex has  non-positive immersions if and only if it has trivial second $L^2$-Betti number? \end{Question}

In the introduction we have mentioned the conjecture of Wise \cite[Conjecture 12.11]{Wi20} that predicts that fundamental groups of two-complexes with non-positive immersions are coherent. In the same spirit we propose the following conjecture.

\begin{Conj}
\label{betti_conjecture}
Let $G$ be the fundamental group of an aspherical 2-complex with trivial  second $L^2$-Betti number. Then $G$ is coherent.
\end{Conj}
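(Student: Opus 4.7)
The plan is to follow the two-step strategy used to prove Baumslag's conjecture in this paper: first establish the homological coherence of $G$, and then promote homological coherence to coherence via \cref{equivalence}. So write $G = \pi_1(X)$ with $X$ aspherical and $b_2^{(2)}(G) = 0$, and let $H \le G$ be a finitely generated subgroup. Asphericity forces the cohomological dimension of $G$, and hence of $H$, to be at most two.

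For homological coherence, the natural target is \cref{Atiyah_coherent}. By the remark following \cref{vanishing2ndL2}, the vanishing of $b_2^{(2)}$ passes to every subgroup of $G$ without any local indicability or Atiyah hypothesis, by replacing $\D_{\Q[G]}$ with the ring of affiliated operators $\mathcal{R}(G)$ and exploiting injectivity of the multiplicative map $\mathcal{R}(H) \otimes_{\CC[H]} \CC[G] \to \mathcal{R}(G)$. To deduce $H \in \fp_2(\Q)$ from $b_2^{(2)}(H) = 0$, one would then like a version of \cref{finite} built on $\mathcal{R}(H)$ rather than a genuine division ring. Here the argument of \cref{fp} does not transfer verbatim, since a von Neumann regular ring need not admit finite generating sets for finite-dimensional submodules in the way used there. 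One either has to refine the projective module argument using the fact that finitely generated projective modules over $\mathcal{R}(H)$ are classified by their von Neumann dimension, or assume the strong Atiyah conjecture so that $\D(H)$ is a honest division ring and \cref{Atiyah_coherent} applies directly.

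The second step is the serious one. To apply \cref{equivalence} we need $G$ to act nontrivially on a tree with coherent vertex stabilisers. In the one-relator case this was supplied by the Magnus--Moldavanskii hierarchy, but no analogous tool is known at this level of generality. The natural approach is induction on a complexity invariant: seek a nontrivial splitting of $G$ over a finitely generated subgroup $C$ such that each vertex group again has an aspherical two-dimensional classifying space with vanishing $b_2^{(2)}$ and strictly smaller complexity, for example smaller first Betti number or smaller presentation rank. Vanishing of $b_2^{(2)}$ combined with asphericity gives $\chi(X) \leqslant 0$, which at least ensures $G$ is far from being a higher-rank lattice, and one might hope to extract splittings from JSJ-theoretic considerations or from Dicks--Dunwoody-style tree decompositions \cite{dicks_89}.

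The principal obstacle is therefore the production of such a hierarchy. Without it, the methods of this paper can only prove the conjecture under additional assumptions (local indicability, hyperbolicity, virtual specialness, existence of a nontrivial free splitting, and so on), under which results like \cref{npi}, \cref{Atiyah_coherent}, \cref{FHth} and \cref{bired_coherent} can be strung together. A proof of the full conjecture appears to require a genuinely new splitting theorem for aspherical two-complex groups satisfying $b_2^{(2)} = 0$, for which the only available input beyond cohomological dimension two is the $L^2$-vanishing hypothesis itself.
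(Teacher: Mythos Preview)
The statement you were asked to prove is labelled as a \emph{Conjecture} in the paper, and the paper does not provide a proof of it. It appears in the final section (``Further comments and open questions'') precisely because it is open. So there is no ``paper's own proof'' to compare against.

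Your write-up is not really a proof proposal but an outline of obstructions, and on that level it is accurate. You correctly identify the two places where the paper's machinery falls short. First, \cref{finite} requires an embedding of $k[G]$ into a genuine division ring, so dropping the Atiyah hypothesis forces one to work with $\mathcal{R}(G)$, and the finitely-generated-projective argument in \cref{fp} does not go through unchanged for von Neumann regular rings. Second, and more seriously, \cref{equivalence} needs $G$ to sit inside the class $\mathcal{CG}$ of \cref{hierarchy}, and nothing in the hypotheses of the conjecture produces a splitting or a hierarchy. Your diagnosis that a new splitting theorem would be needed is exactly why the authors state this as a conjecture rather than a theorem.

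If the intent was to produce a proof, there is none here; if the intent was to explain why the conjecture is not accessible by the methods of the paper, you have done that correctly.
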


Note that in the case that $G$ is known to satisfy the strong Atiyah conjecture, the homological version of \cref{betti_conjecture} holds by \cref{Atiyah_locally} and \cref{npi}. This also leads us to the following question.

\begin{Question}
\label{hom_coherence_question}
Let $G$ be a homologically coherent group. Is $G$ coherent? What if $G$ has cohomological dimension two?
\end{Question}

In the case that $G$ is hyperbolic and has cohomological dimension two, then the answer is yes by a theorem of Gersten \cite{Ge96}. If instead $G$ admits a hierarchy terminating at coherent groups, then the answer is also yes by \cref{hierarchy}.

The relation between the coherence and the vanishing of the  second $L^2$-Betti number suggests also the following question.
\begin{Question}
Let $G$ be the fundamental group of an aspherical 2-complex. Assume that  $G$ is coherent. Is it true that $G$ has trivial  second $L^2$-Betti number?
\end{Question}

For this question we are more inclined towards the answer being no. It would be interesting to calculate the $L^2$-Betti numbers of Coxeter groups whose Davis complex has dimension 2 and compare the results with  predictions of the conjecture of Jankievicz--Wise \cite[Conjecture 4.7]{jankiewicz_16}. Another possible source of examples is torsion-free subgroups of finite index of the groups $\SL_2(\Z[1/p])$. The question of their coherence was posed by Serre in \cite[page 734-735]{Ca73}. However, since these groups  are lattices in $\SL_2(\R)\times \SL_2(\Q_p)$ they are measurable equivalent to the product of two non-abelian free groups $F_2\times F_2$, and so, by a result of Gaboriau \cite[Theorem 6.3]{Ga02}, have non-trivial second $L^2$-Betti numbers. Higman's group is another potential candidate whose coherence or incoherence is not yet known (see \cite[Problem 20]{Wi20}). Its presentation complex is aspherical and has positive Euler characteristic and so has non-vanishing second $L^2$-Betti number.

Similar conjectures and questions can be formulated about coherence of group algebras. We only mention one.
In view of Theorem \ref{npi} and by analogy with Wise's conjecture, we propose the following conjecture.

\begin{Conj}\label{conj:groupalgebracoherent}
Let $K$ be a field and $G$ the fundamental group of   a two-complex with non-positive immersions. Then the group algebra $K[G]$ is coherent.
\end{Conj}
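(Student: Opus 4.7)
The plan is to follow the template provided by Theorem \ref{th:coherentgroupalgebra}. By \cref{npi_indicable} the group $G=\pi_1(X)$ is locally indicable, and by \cref{npi_aspherical} we may assume that $X$ is aspherical, so that $G$ has cohomological dimension at most two and $K[G]$ has global dimension at most two by \cref{gldim}. In characteristic zero, $\D_{K[G]}$ exists by \cite{JL20}; in positive characteristic, we would be forced to assume its existence. By \cref{crit}, the whole question then reduces to proving that $\D_{K[G]}$ has weak dimension at most $1$ as a right $K[G]$-module. Exactly as in the deduction of \cref{weak} from \cref{key}, this in turn reduces to showing that the left $K[G]$-module $\D_{K[G]}\otimes_K I_{K[G]}$ is flat.

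Establishing this flatness is the main obstacle. In \cref{key}, the induction is powered by the reducible structure: one peels off a reducing edge together with its associated two-cell, writes the resulting decomposition as either a free product or a one-relator product, and uses Lemma \ref{injectiveaction} and Shapiro's lemma to propagate the inductive hypothesis across the step. For a general NPI complex no such combinatorial decomposition is available, so the hope is to replace it with a group-theoretic hierarchy. Concretely, I would try to prove that every NPI group $G$ with $\pi_1(X)\neq 1$ splits non-trivially as an HNN-extension or amalgamated free product over a finitely generated free subgroup, in such a way that the vertex groups are fundamental groups of NPI subcomplexes of $X$; local indicability (\cref{npi_indicable}) provides a map to $\Z$ which is the starting point for such a splitting, and one would want a Mayer-Vietoris or tower-theoretic argument (in the spirit of Louder-Wilton's towers for NPI complexes) to make the splitting compatible with flatness. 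With such a hierarchy terminating at free groups, one could then carry out an induction exactly mirroring the proof of \cref{key}: the base case is trivial, and the inductive step reduces to verifying Claim \ref{flatness}-style statements under HNN-extensions and amalgamations over free Magnus-type subgroups.

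A complementary strategy, in the spirit of \cref{Atiyah_coherent}, would be to attack $\Tor_2^{K[G]}(\D_{K[G]},-)$ directly: by \cref{l2betti} one already has $b_2^{(2)}(H)\leqslant b_1^{(2)}(H)$ for every finitely generated $H\leqslant G$, and \cref{vanishing2nd} would then transport vanishing of $b_2^{(2)}(G)$ to every subgroup, giving vanishing of $\Tor_2^{K[G]}(\D_{K[G]},K[G/H])$ for all $H\leqslant G$ via Shapiro. The hardest part on this route is the final leap from vanishing against permutation modules $K[G/H]$ to vanishing against an arbitrary left $K[G]$-module, since flatness is not merely a statement about a generating family of modules. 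My expectation is that the conjecture will ultimately require both ingredients: a structural hierarchy for NPI complexes analogous to reducibility, together with a Hughes-free flatness argument along the lines of \cref{key}, and that producing such a hierarchy is the genuinely new input needed.
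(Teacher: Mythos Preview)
This statement is a \emph{conjecture} in the paper, appearing in Section~\ref{sect:final} (``Further comments and open questions''); the paper does not prove it and does not claim to. So there is no proof in the paper to compare your proposal against.

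Your write-up is, appropriately, not a proof either but an outline of possible strategies, and on that level it is sensible and aligns with the paper's viewpoint: the reduction via \cref{crit} to weak dimension $\leqslant 1$ of $\D_{K[G]}$, and then via the argument of \cref{weak} to flatness of $\D_{K[G]}\otimes_K I_{K[G]}$, is exactly the template the paper uses for reducible complexes. You correctly identify the real obstruction, namely that the inductive mechanism in \cref{key} depends on the reducible structure and there is no known analogous hierarchy for general NPI complexes.

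One genuine gap worth flagging in your second (``complementary'') strategy: you invoke \cref{vanishing2nd} to transport vanishing of $b_2^{(2)}(G)$ to subgroups, but for NPI groups it is \emph{not known} that $b_2^{(2)}(G)=0$. \cref{l2betti} only gives $b_2^{(2)}(H)\leqslant b_1^{(2)}(H)$ for finitely generated $H$, and the paper explicitly poses the equivalence of NPI with vanishing second $L^2$-Betti number as an open question (Question~2 in Section~\ref{sect:final}). So that route presupposes an additional unproven statement before you even get to the ``final leap'' from permutation modules to arbitrary modules.
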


\bibliographystyle{amsalpha}
\bibliography{bibliography}

\end{document}